\numberwithin{equation}{section}
          \theoremstyle{definition}
          \newtheorem{theorem}{Theorem}[section]
          \newtheorem{prop}[theorem]{Proposition}
          \newtheorem{lemma}[theorem]{Lemma}
          \newtheorem{cor}[theorem]{Corollary}
          \newtheorem{defn}[theorem]{Definition}
          \newtheorem{notation}[theorem]{Notation}
          \newtheorem{example}[theorem]{Example}
          \newtheorem{choice}[theorem]{Choice}
          \newtheorem{remark}[theorem]{Remark}
\numberwithin{equation}{section}
          \newcommand{\nc}{\newcommand}
          \nc{\DMO}{\DeclareMathOperator}	
          \nc{\commentout}[1]{}
          \nc{\newnotation}{\nomenclature}
          \nc{\wrap}{\cW}
          \nc{\Tw}{\mathsf{Tw}}
          \nc{\loc}{\mathsf{Loc}}
          \nc{\Top}{Top}
          \nc{\emb}{\mathsf{emb}}
          \nc{\ind}{\mathsf{Ind}}
          \nc{\Ind}{\mathsf{Ind}}
          \nc{\Loc}{\mathsf{Loc}}
          \nc{\Cob}{\mathsf{Cob}}
          \nc{\mul}{\mathsf{Mul}}
          \nc{\fat}{\mathsf{fat}}
          \nc{\cob}{\mathsf{Cob}}
          \nc{\coh}{\mathsf{Coh}}
          \nc{\Liouaut}{\Aut_{\mathsf{Liou}}}
          \nc{\idem}{\mathsf{Idem}}
          \nc{\sets}{\mathsf{Sets}}
          \nc{\near}{\mathsf{near}}
          \nc{\sing}{\mathsf{Sing}}
          \nc{\Sing}{\mathsf{Sing}}
          \nc{\perf}{\mathsf{Perf}}
          \nc{\block}{\mathsf{block}}
          \nc{\ssets}{\mathsf{sSets}}
          \nc{\cmpct}{\mathsf{cmpct}}
          \nc{\compact}{\mathsf{cmpct}}
          \nc{\pwrap}{\mathsf{PWrap}}
          \nc{\coder}{\mathsf{Coder}}
          \nc{\bimod}{\mathsf{Bimod}}
          \nc{\grmod}{\mathsf{GrMod}}
          \nc{\Morita}{\mathsf{Morita}}
          \nc{\morita}{\mathsf{Morita}}
          \nc{\spaces}{\mathsf{Spaces}}
          \nc{\pwrms}{\mathsf{PWrFuk}_{M,S}}
          \nc{\pwrmf}{\mathsf{PWrFuk}_{M,F}}
          \nc{\pwrapmf}{\mathsf{PWrFuk}_{M,F}}
          \nc{\fuk}{\mathsf{Fukaya}}
          \nc{\infwr}{\mathsf{InfWr}}
          \nc{\fukaya}{\mathsf{Fukaya}}
          \nc{\autml}{\mathsf{Aut}_{M,\Lambda}}
          \nc{\fukml}{\mathsf{Fukaya}_{M,\Lambda}}
          \nc{\fukmle}{\mathsf{Fukaya}_{M,\Lambda,\epsilon}}
          \nc{\fukmod}{\wrfukcompact(M)\modules}
          \nc{\lag}{\mathsf{Lag}}
          \nc{\lagm}{\lag_M}
          \nc{\lago}{\lag^o}
          \nc{\lagml}{\lag_{M,\Lambda}} % For when I get lazy.
          \nc{\lagmle}{\lag_{M,\Lambda,\epsilon}}
          \nc{\Fun}{\mathsf{Fun}}
          \nc{\fun}{\mathsf{Fun}}
          \nc{\vect}{\mathsf{Vect}}
          \nc{\chain}{\mathsf{Chain}}
          \nc{\chainn}{Chain}
          \nc{\wrfuk}{\mathsf{WrFukaya}}
          \nc{\wrfukcompact}{\mathsf{WrFukaya}_{\mathsf{cmpct}}}
          \nc{\pwrfuk}{\mathsf{PWrFukaya}}
          \nc{\inffuk}{\mathsf{InfFuk}}
          \nc{\pwrfukml}{\mathsf{PWrFukaya}_{M,\Lambda}}
          \nc{\inffukml}{\mathsf{InfFuk}_{M,\Lambda}}
          \nc{\nattrans}{\mathsf{NatTrans}}
          \nc{\corres}{\mathsf{Corres}}
          \nc{\fukep}{\fukaya_\Lambda(M,\epsilon)}
          \nc{\fukepop}{\fukaya_\Lambda(M,\epsilon)^{\op}}
          \nc{\lagep}{\lag_\Lambda(M,\epsilon)}
          \DMO{\cyl}{cyl} % Cylindrical
          \nc{\dbcoh}{D^b\mathsf{Coh}}
          \nc{\corr}{\mathsf{Corr}}
          \nc{\Liouauto}{{\Aut^o}}
          \nc{\Liouautb}{\Aut^{b}}
          \nc{\Liouautgr}{{\Aut^{gr}}}
          \nc{\Liouautgrb}{\Aut^{gr,b}}
          \nc{\Fuk}{\mathsf{Fuk}}
          \DMO{\im}{im}
          \DMO{\ev}{ev}
          \DMO{\stable}{Ex}
          \DMO{\inj}{inj}
          \DMO{\fib}{fib}
          \DMO{\conf}{Conf}
          \DMO{\chains}{Chains}
          \DMO{\cochains}{Cochains}
          \DMO{\cone}{Cone}
          \DMO{\Map}{Map}
          \DMO{\ran}{Ran}
          \DMO{\rot}{Rot}
          \DMO{\leg}{Leg}
          \DMO{\imm}{imm}
          \DMO{\adj}{adj}
          \DMO{\symp}{Symp}
          \DMO{\tree}{Tree}
          \DMO{\cube}{Cube}
          \DMO{\deep}{deep}
          \DMO{\back}{back}
          \DMO{\Hoch}{Hoch}
          \DMO{\front}{front}
          \DMO{\flow}{Flow}
          \DMO{\floer}{Floer}
          \DMO{\Maps}{Maps}
          \DMO{\exact}{exact}
          \DMO{\excess}{Excess}
          \DMO{\Decomp}{Decomp}
          \DMO{\decomp}{Decomp}
          \DMO{\collar}{collar}
          \DMO{\yoneda}{Yoneda}
          \DMO{\hamspace}{Ham}
          \DMO{\sympspace}{Symp}
          \DMO{\holomaps}{Holomaps}
          \DMO{\comp}{Comp}
          \DMO{\crit}{Crit}
          \DMO{\test}{{test}}
          \DMO{\sign}{sign}
          \DMO{\topp}{top}
          \DMO{\indx}{Index}
          \DMO{\Break}{Break} % Partitions
          \DMO{\zero}{zero} %Zero
          \DMO{\ob}{Ob}
          \DMO{\gr}{Gr} % Grassmanian
          \DMO{\Gr}{Gr} % Grassmanian
          \DMO{\cl}{Cl} % Clifford Algebra
          \DMO{\grlag}{GrLag}
          \DMO{\Pin}{Pin}
          \DMO{\Graph}{Graph}
          \DMO{\pin}{Pin}
          \DMO{\gap}{Gap}
          \DMO{\Ex}{Ex}
          \DMO{\id}{id}
          \DMO{\End}{End}
          \DMO{\sym}{Sym}
          \DMO{\aut}{Aut}
          \DMO{\Aut}{Aut}
          \DMO{\haut}{hAut}
          \DMO{\hAut}{hAut}
          \DMO{\DK}{DK} %Dold-Kan
          \DMO{\poly}{poly} % Polynomial deRham forms
          \DMO{\diff}{Diff}
          \DMO{\coll}{coll}
          \DMO{\dist}{dist} %Distance function
          \DMO{\coker}{coker} %Cokernel
          \nc{\kernel}{\ker} %Kernel
          \DMO{\sspan}{span}
          \DMO{\hocolim}{hocolim}	
          \DMO{\holim}{holim}
          \DMO{\sk}{sk}
          \DMO{\ho}{ho}
          \DMO{\fin}{fin}
          \DMO{\tor}{Tor}
          \DMO{\ext}{Ext}
          \DMO{\ret}{Ret}
          \DMO{\ham}{Ham}
          \DMO{\con}{con}
          \DMO{\leaf}{leaf}
          \DMO{\supp}{supp}
          \DMO{\edge}{edge}
          \DMO{\colim}{colim}
          \DMO{\edges}{edges}
          \DMO{\Image}{image}
          \DMO{\roots}{roots}
          \DMO{\height}{height}
          \DMO{\finmod}{FinMod}
          \DMO{\leaves}{leaves}
          \DMO{\planar}{planar}
          \DMO{\vertices}{vertices}
\nc{\norm}[2]{{ \ensuremath{\|} #1 \ensuremath{\|}}_{#2}}
\nc{\Dbar}[1]{\ensuremath{{\bar{\partial}}_{#1}}}
\nc{\Ce}{\ensuremath{\mathbb{C}}}
\nc{\B}{\ensuremath{\mathbb{B}}}
\nc{\osc}{\operatorname{osc}}
\nc{\leng}{\operatorname{leng}}
          \nc{\lagg}{\lag^{\cG}}
          \nc{\iso}{\mathsf{Iso}}
          \nc{\Set}{\mathsf{Set}}
          \nc{\Ass}{\mathsf{ \bf Ass}}
          \nc{\Mod}{\mathsf{Mod}}
          \nc{\modules}{\mathsf{Mod}}
          \nc{\sset}{\mathsf{sSet}}
          \nc{\liou}{\mathsf{Liou}}
          \nc{\poset}{\mathsf{Poset}}
          \nc{\trno}{T^*\RR^n_{\geq 0}}
          \nc{\spectra}{\mathsf{Spectra}}
          \nc{\tensorfin}{\tensor^{\fin}}
          \nc{\lagptg}{\lag_{pt,pt}^{\cG}}
          \nc{\Fin}{\mathcal{F}\mathsf{in}}
          \nc{\lagnl}{\lag_{N,\Lambda}}
          \nc{\lagmlg}{\lag_{M,\Lambda}^{\cG}}
          \nc{\lagsplit}{\lag^{\mathsf{split}}}
          \nc{\lagktimes}{(\lag^{\dd k})^\times}
          \nc{\lagplanar}{\lag^{\times,\planar}}
          \nc{\Cont}{\text{\rm Cont}}
          \nc{\Ham}{\text{\rm Ham}}
          \nc{\Dev}{\text{\rm Dev}}
          \nc{\Lin}{\text{\rm Lin}}
          \nc{\Int}{\text{\rm Int}}
          \nc{\Hom}{\text{\rm Hom}}
          \nc{\Chord}{\text{\rm Chord}}
          \nc{\nbhd}{\mathcal{N}\text{\rm{bhd}}}
          \nc{\onef}{1_{\fukaya}}
          \nc{\smsh}{\wedge}
          \nc{\un}{\underline}
          \nc{\xto}{\xrightarrow}
          \nc{\xra}{\xto}
          \nc{\tensor}{\otimes}
          \nc{\del}{\partial}
          \nc{\dd}{\diamond}
          \nc{\tri}{\triangle}
          \nc{\bb}{\Box}
          \nc{\into}{\hookrightarrow}
          \nc{\onto}{\twoheadrightarrow}
          \nc{\contains}{\supset}
          \nc{\transverse}{\pitchfork}
          \nc{\uncirc}{\underline{\circ}}
          \nc{\thetacontact}{\theta} % For the contact 1-form
          \nc{\Jbar}{\overline{J}}
          \nc{\Fbar}{\overline{F}}
          \nc{\delbar}{\overline{\del}}
          \nc{\thetabar}{\overline{\theta}}
          \nc{\omegabar}{\overline{\omega}}
          \nc{\Liou}{\text{\rm Liou}}
          \nc{\Yhat}{\widehat{Y}}
          \nc{\Mliou}{M}
          \nc{\vece}{ {\vec \epsilon}}	
          \nc{\vecd}{ {\vec \delta}}
          \nc{\ov}{\overline}
          \DMO{\op}{op}
          \nc{\opp}{ ^{\op}}
          \nc{\hiro}{\textcolor{blue}}
          \nc{\YG}{\textcolor{orange}}
\numberwithin{equation}{section}
\def\R{{\mathbb R}}
\def\osc{{\hbox{\rm osc }}}
\def\C{{\mathbb C}}
\def\R{{\mathbb R}}
\def\11{{\mathbb I}}
\def\Jbar{{\widetilde J}}
\def\delbar{{\overline \partial}}
          \def\cD{\mathcal D}
          \def\cF{\mathcal F}\def\cG{\mathcal G}
          \def\cL{\mathcal L}
          \def\cN{\mathcal N}\def\cO{\mathcal O}\def\cP{\mathcal P}
          \def\cU{\mathcal U}\def\cW{\mathcal W}\def\c\Mliou{\mathcal \Mliou}
          \def\FF{\mathbb F}\def\HH{\mathbb H}
          \def\NN{\mathbb N}
          \def\RR{\mathbb R}
          \def\\Mliou\Mliou{\mathbb \Mliou}
          \def\s\Mliou{\mathsf \Mliou}
          \def\b\Mliou{\mathbf \Mliou}
          \def\f\Mliou{\mathfrak \Mliou}
\def\Jbar{{\widetilde J}}
\def\delbar{{\overline \partial}}
\def\del{\partial}
\def\b{\beta}
\def\c{\chi}
\def\f{\phi}
\def\s{\sigma}
\def\CL{{\mathcal L}}
\def\CN{{\mathcal N}}
\def\darr#1{\raise1.5ex\hbox{$\leftrightarrow$}
\mkern-16.5mu #1}
\def\roughly#1{\raise.3ex\hbox{$#1$\kern-.75em
\lower1ex\hbox{$\sim$}}}
\def\opname#1{\mathop{\kern0pt{\rm #1}}\nolimits}
\def\Re{\opname{Re}}
\def\End{\opname{End}}
\def\dim{\opname{dim}}
\def\dist{\opname{dist}}
\def\supp{\operatorname{supp}}
\def\Dev{\operatorname{Dev}}
\def\leng{\operatorname{leng}}
\def\End{\operatorname{End}}
\def\Aut{\operatorname{Aut}}
\def\coker{\operatorname{Coker}}
\def\Cont{\operatorname{Cont}}
\def\Sing{\operatorname{Sing}}
\def\Diff{\operatorname{Diff}}
\def\Image{\operatorname{Image}}
\def\ev{\operatorname{ev}}
\def\Int{\operatorname{Int}}
\def\ben{\begin{enumerate}}
\def\een{\end{enumerate}}
\def\be{\begin{equation}}
\def\ee{\end{equation}}
\def\bea{\begin{eqnarray}}
\def\eea{\end{eqnarray}}
\def\beastar{\begin{eqnarray*}}
\def\eeastar{\end{eqnarray*}}
\def\bc{\begin{center}}
\def\ec{\end{center}}
\renewcommand{\b}{\beta}
\def\Hoch{{\tt Hoch}}
\def\Cont{\operatorname{Cont}}
\def\Sing{\operatorname{Sing}}
\def\Ham{\operatorname{Ham}}
\def\Graph{\operatorname{Graph}}
\def\id{\text{\rm id}}
\begin{document}

% ------------------- Title and Author -----------------------------
%\title{The monoid of Liouville sectors with corners and sectorial Floer data and sectorial
%branes}
%
%\author[$\dagger$]{Yong-Geun Oh}
%\author[$\star$]{Hiro Lee Tanaka}
%
%\affil[$\dagger$]{Center for Geometry and Physics (IBS), Pohang, Korea \&
%Department of Mathematics, POSTECH, Pohang Korea.}
%\affil[$\star$]{Department of Mathematics, Texas State University}
% ------------------- Title and Author -----------------------------
\title[Liouville sectors with corners]{Presymplectic characterization of 
Liouville sectors with corners, and its monoidality}

\author{Yong-Geun Oh}
\address{Center for Geometry and Physics, Institute for Basic Science (IBS),
77 Cheongam-ro, Nam-gu, Pohang-si, Gyeongsangbuk-do, Korea 790-784
\& POSTECH, Gyeongsangbuk-do, Korea}
\email{yongoh1@postech.ac.kr}
\thanks{This work is supported by the IBS project \# IBS-R003-D1}

\begin{abstract}
We provide a presymplectic characterization of Liouville sectors introduced
by Ganatra-Pardon-Shende in \cite{gps,gps2}
in terms of the characteristic foliation of the boundary,
which we call \emph{Liouville $\sigma$-sectors}.
We extend this definition to the case with corners
using the presymplectic geometry of null foliations of the coisotropic
intersections of \emph{transverse coisotropic collection} of hypersurfaces which
appear in the definition of Liouville sectors with corners. We show that \emph{the set of
Liouville $\sigma$-sectors with corners canonically forms a monoid}
which provides a natural framework of considering the K\"unneth-type functors
in the wrapped Fukaya category.
We identify its automorphism group which enables one to give a natural definition of
\emph{bundles of Liouville sectors}. As a byproduct, we
affirmatively answer to a question raised  in \cite[Question 2.6]{gps},
which asks about the optimality of their definition of Liouville sectors in \cite{gps}.
\end{abstract}
\keywords{Liouville sectors, Liouville $\sigma$-sectors, Giroux's ideal Liouville domains,
presymplectic geometry, coisotropic submanifolds, Gotay's coisotropic embeddging theorem,
 transverse coisotropic collection}

\maketitle

\tableofcontents

\section{Introduction}

Ganatra-Pardon-Shende introduced a flexible framework of \emph{Liouville sectors} (with corners)
and established the local-to-global principle of wrapped Fukaya categories in a series of papers
\cite{gps,gps2,gps-microlocal}.

In \cite{oh-tanaka:liouville-bundles},
Tanaka and the present author constructed an unwrapped Floer theory
for \emph{bundles} of Liouville manifolds and Liouville sectors. The output was a collection
of unwrapped Fukaya categories associated to fibers of a Liouville bundle of Liouvile sectors,
along with  a compatibility between two natural constructions of continuation maps.
This set-up enabled them to make the construction of Floer-theoretic invariants
of smooth group actions on Liouville manifolds, and they exploited these constructions
in \cite{oh-tanaka:actions,oh-tanaka:localizations} to construct homotopically coherent actions of Lie groups on wrapped Fukaya categories, thereby proving a version of a conjecture from Teleman's 2014 ICM address.

\subsection{Presymplectic characterization of Liouville sectors}

The original definition of Liouville sectors given in \cite{gps,gps2} makes it somewhat clumsy
to identify the structure group of a bundle of Liouville sectors with corners, and to define \emph{the bundle} 
of Liouville sectors with \emph{corners} as in
\cite{oh-tanaka:liouville-bundles}: This is partly because not every defining condition 
related to the \emph{presymplectic geometry of the boundary} is manifestly invariant under the 
action of \emph{Liouville diffeomorphisms}. This was the starting point of current investigation.
In this paper, we introduce a more intrinsic 
but equivalent definition of Liouville sector which skirts this issue: We say it is more
intrinsic in that our definition is closer to one in the sense of $G$-structures.
(See \cite{chern} or \cite[Chapter VII]{sternberg} for a general introduction to $G$-structures.)

\begin{remark}
It may be worthwhile to mention that in the original definition of 
Liouville sectors from \cite{gps,gps2} consideration of the product of Liouville sectors 
is somewhat clumsy and nontrivial which affects the discussion of K\"unneth-type functors. 
However it follows from our definition of $\sigma$-sectors that the product of two Liouville 
$\sigma$-sector canonically becomes a \emph{Liouville $\sigma$-sectors with corners}. 
(See Proposition \ref{prop:product} for the proof.) 
We refer readers to the discussion around \cite[Lemma 2.21]{gps} and  \cite[Section 6]{gps2},
and to \cite{oh:gradient-sectorial} for some relevant discussion on the construction of 
monoidal property of wrapped Fukaya category.
\end{remark}

We start with our discussion of $M$ for the case without corners.

Let $(M, \omega)$ be a symplectic manifold with boundary, which we assume \emph{tame}
in the standard sense in symplectic geometry, say, from \cite{sikorav:tame}.
The boundary $\del M$ (or more generally any coisotropic submanifold $H$) then
carries a natural structure of a \emph{presymplectic manifold} in the sense that
the restriction two form
$$
\omega_\del: = \iota^*\omega
$$
has constant nullity. (See \cite{gotay}, \cite{oh-park} for some detailed explanation on presymplectic
manifolds.) Here $\iota: \del M \to M$ is the inclusion map.

\begin{notation}[$\cD_{\del M}$, $\cN_{\del M}$ and $\pi: \del M \to \cN_{\del M}$]
We denote the characteristic distribution of $(\del M,\omega_\del)$ by
$$
\cD_{\del M} = \ker \omega_\del.
$$
With a slight abuse of notation, we also denote by $\cD_{\del M}$ the associated integrable foliation,
and let $\pi_{\del M}: \del M \to \cN_{\del M}$ be its leaf map.
\end{notation}

Now consider a Liouville manifold $(M,\lambda)$ with boundary and denote by
$$
(\del_\infty M, \xi_\infty)
$$
its ideal boundary as a contact manifold equipped with the contact \emph{distribution} $\xi_\infty$
canonically induced by the Liouville form $\lambda$. (See \cite{giroux}. We recall that
there is no contact \emph{form} on $\del_\infty M$ canonically induced from $\lambda$.)

We will assume \emph{$M$ is $C^3$-tame} in the sense of \cite{choi-oh:quasiisometry} 
which will be needed for the study of the question \cite[Question 2.6]{gps}
 in Section \ref{sec:GPS-question}.
Throughout this paper, by ``near infinity,'' we mean ``on the complement on some compact subset 
of $M$.''

\begin{defn}[Liouville $\sigma$-sectors]\label{defn:liouville-sector-intro}
We say a Liouville manifold with boundary $(M,\lambda)$ is a {\em Liouville $\sigma$-sector} if the following holds:
\begin{enumerate}[(a)]
\item\label{item. contact boundary} The Liouville vector field $Z$ of $M$ is tangent to $\del M$ near infinity.
\item\label{item. convex corner} $\del_\infty M \cap \del M$ is the boundary of $\del_\infty M$, and is convex (as a hypersurface of the contact manifold $\del_\infty M$).
\item\label{item. foliation section} The canonical projection map $\pi:\del M \to \cN_{\del M}$ (to the leaf space of
the characteristic foliation) admits a continuous section, and has fibers abstractly homeomorphic to $\RR$.
\end{enumerate}
\end{defn}
The condition (\ref{item. foliation section})
in this definition is the difference from that of the \emph{Liouville sector} of \cite{gps} and
is responsible for our naming of \emph{Liouville $\sigma$-sectors} where $\sigma$ stands
either for `section' or for `sectional'. It can be replaced by the contractibility of fibers.
(See Corollary \ref{cor:contractible-fiber}.) We will also show in Corollary \ref{cor:trivial-CD} that
the line bundle $\cD_{\del M}$  appearing in this definition is trivial.

\begin{remark}\label{rem:presymplectic}
\begin{enumerate}
\item
In the point of $G$-structures, the choice of a section corresponds to a reduction of
the structure group from $\Diff(\R)$ to $\Diff(\R,\{0\})$ of the $\R$-bundle associated to the null foliation.
\item It is worthwhile to mention that
the presymplectic structure on $(\del M, \omega_\del)$ uniquely determines a symplectic structure on the germ of a neighborhood
 up to symplectic diffeomorphism. (See  \cite{gotay}.) Our definition of Liouville $\sigma$-sectors \emph{with corners}
is much based on Gotay's coisotropic embedding theorem of presymplectic manifolds
\cite{gotay}, applied to a germ of neighborhoods of the boundary $\del M$ or more generally of coisotropic
submanifolds of $(M,d\lambda)$.
\item The condition \eqref{item. foliation section}
depends only on the presymplectic geometry of $(\del M, d\lambda_\del)$ with $\lambda_\del = i_{\del M}^*\lambda$
while the conditions \eqref{item. contact boundary} and \eqref{item. convex corner}
depend on the Liouville geometry at infinity of the ideal contact boundary $\del_\infty M$.
The two geometries are connected by the global topological triviality
of the characteristic foliation implied by \eqref{item. foliation section}.
(See Theorem \ref{thm:GPS-question-intro}.)
\end{enumerate}
\end{remark}

Note that a Liouville ($\sigma$-)sector $\Mliou$ is a smooth manifold (possibly with non-compact corners) and the Liouville
flow determines a well-defined contact manifold $\del_\infty \Mliou$ ``near infinity'' (possibly with boundary).  We will informally write
	\be
	\del_\infty \Mliou \cap \del \Mliou = \del (\del_\infty M)
	\ee
to mean the boundary of $\del_\infty \Mliou$ and call it the \emph{ceiling corner} of the Liouville sector.
(When $\del_\infty M$ has corners, ``boundary'' means the union of all boundary strata.)

\begin{theorem}[Theorem \ref{thm:equivalence-H} for $H = \del M$] \label{thm:equivalence-intro}
Under the above definition of Liouville $\sigma$-sector,  the following holds:
\begin{enumerate}[(1)]
\item\label{item.equivalence cL manifold-intro} $\cN_{\del \Mliou}$ carries
the structure of Hausdorff smooth manifold  such that $\pi: \del M \to \cN_{\del \Mliou}$
is a smooth submersion.
\item The given continuous section $\sigma$ of $\pi: \del M \to \cN_{\del M}$
can be $C^0$-approximated by a smooth section $\sigma^{\text{\rm sm}}$
as close as we want.
\item\label{item. equivalence symplectic structure-intro} 
$\cN_{\del \Mliou}$ carries a canonical symplectic structure denoted by $\omega_{\cN_{\del M}}$ 
as a coisotropic reduction of $\del \Mliou \subset \Mliou$: We set $F: = \text{\rm Image }\sigma^{\text{\rm sm}}$. 
Then there is a diffeomorphism
$\Psi: \del \Mliou \to F \times \RR$ and a commutative diagram
\be \label{eq:diagram-intro}
\xymatrix{\del \Mliou \ar[d]^\pi \ar[r]^{\Psi} & F \times \RR \ar [d]^{\pi_F}\\
\cN_{\del \Mliou} \ar[r]^{\psi} & F
}
\ee
with  $\pi_F$ the canonical projection such that the aforementioned smooth section 
$\sigma^{\text{\rm sm}}$ satisfies 
$$
(\sigma^{\text{\rm sm}})^*\omega_\del = \omega_{\cN_{\del M}}.
$$ 
\item $(\cN_{\del M},\omega_{\cN_{\del M}})$ carries a canonical Liouville one-form $\lambda_{\cN_{\del M}}$:
The map $\psi$ is a Liouville diffeomorphism between
$(\cN_{\del M},\lambda_{\cN_{\del M}})$ and $(F, \lambda|_F)$ with the Liouville form
$ \lambda|_F$ on $F$, which is given by $\psi(\ell) = \sigma(\ell)$
for $\ell \in \cN_{\del M}$.
\end{enumerate}
\end{theorem}
The existence result of a \emph{smooth} section $\sigma^{\text{\rm sm}}$ is a kind of
a \emph{smoothing result} of the given continuous section $\sigma: \cN_H \to H$.
In the literature, we could not locate such a smoothing result of a section of the leaf space 
projection of the foliation, and so provide its full proof in Subsection \ref{subsec:smoothing} for our 
current circumstance. We refer to Section \ref{sec:intrinsic} for the precise description on the dependence
of various structures and maps on the choice of section $\sigma$.

\begin{remark} Other than the existence of the contact vector field transverse to the contact distribution,
which is the defining property of the convexity of hypersurfaces,
the \emph{contact geometry of ideal boundary $\del_\infty M$} does not enter in the proof of this theorem:
It is mainly about the \emph{presymplectic geometry of coisotropic submanifold $\del M$}, which
makes our affirmative answer to the question \cite[Question 2.6]{gps} plausible.
See Remarks \ref{rem:dividing-set}, \ref{rem:GPS-question} below for a further elaboration.
\end{remark}

The following can be also derived in the course of proving the above theorem.
(In fact the argument deriving this proposition is nearly identical to
that of the proof of \cite[Lemma 2.5]{gps}.)

\begin{prop}\label{prop:equivalence-intro}
Let $(M,\lambda)$ be a Liouville $\sigma$-sector. Then
\begin{enumerate}
\item Each choice of smooth section $\sigma$ of $\pi$ and a constant $0 < \alpha \leq 1$ canonically provides a smooth function
$I: \del M \to \RR$ such that $Z(I) = \alpha I$,
\item There is a germ of neighborhood $\nbhd(\del M)$ (unique up to a symplectomorphism fixing
$\del M$) on which the natural extension of $I$, still denoted by $I$, admits a function 
$R: \nbhd(\del M) \to \RR$ satisfying $\{R,I\} = 1$ and vanishing along $\del M$.
\end{enumerate}
\end{prop}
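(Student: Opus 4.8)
The plan is to adapt the proof of \cite[Lemma 2.5]{gps} to the $\sigma$-sector setting, feeding in the global trivialization of the characteristic foliation furnished by Theorem \ref{thm:equivalence-intro}. Throughout I use the commutative diagram \eqref{eq:diagram}: the section $\sigma$ identifies $\del \Mliou \cong F \times \RR$ with $F = \Image\sigma$ and with the $\RR$-factor running along the leaves of $\cD_{\del M} = \ker\omega_\del$, and it equips $F \cong \cN_{\del M}$ with the induced Liouville form $\lambda_{\cN_{\del M}}$ of parts (3)--(4) of that theorem. All of this reduces the problem to a one-variable homogeneity equation along the leaves plus a flow-box construction transverse to $\del M$.

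For the first claim I would build $I$ as follows. Near infinity $Z$ is tangent to $\del M$ by condition (a), and since $L_Z\omega = \omega$ the field $Z$ preserves $\cD_{\del M} = \ker\omega_\del$; hence it descends along $\pi$ to the Liouville field of $\lambda_{\cN_{\del M}}$, and in the trivialization $\del \Mliou \cong F\times\RR$ it has the schematic form (leaf-scaling)$\,+\,$(component tangent to the $F$-slices). I therefore look for an $I$ that is pulled back from the leaf coordinate alone, i.e. constant in the $F = \Image\sigma$ directions. For such an $I$ the $F$-component of $Z$ annihilates $I$, so $Z(I) = \alpha I$ collapses to the single-variable equation $t\,I'(t) = \alpha I$ along each leaf near infinity, with $t$ the $Z$-flow parameter normalized to vanish on $F$. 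Solving this ODE gives the $\alpha$-homogeneous profile $I = e^{\alpha t}$ (a degree-$\alpha$ power in the conical coordinate), which I then extend smoothly over the compact part of $\del M$. By construction $Z(I) = \alpha I$ near infinity and $dI$ is nonzero along every leaf, and the dependence on $(\sigma,\alpha)$ is precisely the choice of the transversal $\{t=0\}$ and of the homogeneity weight; the constraint $0 < \alpha \leq 1$ is what keeps the homogeneous profile smooth in Gotay's model coordinate.

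The single genuinely geometric point is that the Hamiltonian vector field $X_I$, defined by $\iota_{X_I}\omega = dI$, is transverse to $\del M$, and in fact outward pointing. I would prove this directly from the fact that $I$ is a leafwise coordinate: letting $C$ be a vector field spanning $\cD_{\del M} = \ker\omega_\del$, one has $dI(C)\neq 0$ on $\del M$, while $\omega(C, X_I) = -dI(C)$. If $X_I$ were tangent to $\del M$ this would equal $\omega_\del(C, X_I) = 0$, since $C\in\ker\omega_\del$, a contradiction. Hence $X_I \transverse \del M$, the outward direction being fixed by the sign convention on $I$. For the second claim I then extend $I$ to a germ $\nbhd(\del M)$ by Gotay's coisotropic embedding theorem, which simultaneously produces the symplectic germ (unique up to a symplectomorphism fixing $\del M$) and the natural $Z$-homogeneous extension of $I$. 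Since $X_I$ is transverse, its flow $\phi^\tau_{X_I}$ sweeps out $\nbhd(\del M)$, and I set $R = -\tau$, minus the $X_I$-flow time measured from $\del M$. Then $X_I(R) = -1$, equivalently $\{R, I\} = 1$, with $R|_{\del M} = 0$. Uniqueness is an ODE statement: any $R'$ with $\{R',I\}=1$ and $R'|_{\del M}=0$ satisfies $X_I(R-R')=0$ with vanishing data along the transversal $\del M$, so $R'=R$.

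I expect the main obstacle to lie in the \emph{global} construction and smoothness of $I$ in the first claim, namely reconciling the homogeneous form forced near infinity --- where the relevant data come from the convex ceiling corner $\del_\infty \Mliou \cap \del \Mliou$ and the induced Liouville form $\lambda_{\cN_{\del M}}$ --- with an arbitrary smooth leafwise-coordinate extension over the compact region, all while keeping $dI$ nonvanishing along every leaf so that the transversality of $X_I$ above holds \emph{everywhere} on $\del M$. The delicate step is the behavior at the ceiling corner, where the presymplectic/foliation data on $\del M$ must be matched to the contact data at infinity; in the $\sigma$-sector axioms these two are linked exactly by the topological triviality of $\cD_{\del M}$ (cf. Theorem \ref{thm:equivalence-intro} and Remark \ref{rem:presymplectic}), and this is where the argument must be carried out with care. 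Once the transversality of $X_I$ is secured, everything downstream is the routine flow-box computation of \cite[Lemma 2.5]{gps}.
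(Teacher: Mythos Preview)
Your approach is essentially the paper's: it too sets $I = \pm e^{\alpha t}$ with $t$ the leaf-flow time furnished by the trivialization $\del M \cong F \times \RR$ of Theorem~\ref{thm:equivalence-intro}, verifies $Z[I] = \alpha I$ near infinity, and for part~(2) invokes the same flow-box/Hamiltonian-flow construction as \cite[Lemma~2.5]{gps}. One notational slip worth fixing: if $t$ is the $Z$-flow parameter (so $Z[t]=1$) then $Z(I)=\alpha I$ reads $I'(t)=\alpha I$, not $t\,I'(t)=\alpha I$; your stated solution $e^{\alpha t}$ solves the former, not the latter.
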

%
%An immediate corollary of Theorem \ref{thm:equivalence} and this proposition
%is the following equivalence theorem.
%
%\begin{cor}\label{cor:sector-versus-sigmasector}
% Each Liouville $\sigma$-sector $(\Mliou,\lambda)$ is a Liouville sector in the sense of~\cite{gps},
%and vice versa.
%\end{cor}

\subsection{Interpolation of presymplectic  and Liouville geometry at infinity}

Another interesting consequence is the following affirmative answer to a question raised by
Ganatra-Pardon-Shende.

\begin{theorem}[Theorem \ref{thm:GPS-question}; Question 2.6 \cite{gps}]
\label{thm:GPS-question-intro} Suppose $M$ is a Liouville manifold-with-boundary such that
\begin{enumerate}
\item the Liouville vector field is tangent to $\del M$ near infinity, and
\item there is a diffeomorphism $\del M = F \times \RR$ sending the characteristic foliation to the foliation by
leaves $\RR \times \{p\}$.
\end{enumerate}
Then $\del_\infty M \cap \del M$ is convex in $\del_\infty M$. In particular $M$ is a Liouville sector
in the sense of \cite{gps}.
\end{theorem}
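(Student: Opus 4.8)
The plan is to deduce convexity of the ceiling corner $\Sigma := \del_\infty \Mliou \cap \del \Mliou$ from a product normal form for a neighborhood of $\del \Mliou$, and then to invoke the intrinsic characterization of convexity furnished by Corollary~\ref{cor:convexity}. First I would exploit hypothesis (2): the diffeomorphism $\del \Mliou \cong F \times \RR$ carrying the characteristic foliation to the leaves $\RR \times \{p\}$ says precisely that the null foliation of the presymplectic manifold $(\del \Mliou, \omega_\del)$ is a trivial $\RR$-bundle over its leaf space, so that the leaf space is the smooth manifold $\cN_{\del \Mliou} \cong F$ and $\omega_\del = \pi_F^*\,\omega_F$ for the reduced symplectic form $\omega_F$. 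Applying Gotay's coisotropic embedding theorem \cite{gotay} to this presymplectic datum then produces a symplectomorphism from a germ of neighborhood $\nbhd(\del \Mliou)$ onto a neighborhood of the zero section of the conormal line bundle of $\del \Mliou$, which is trivialized by (2); concretely a neighborhood of $F \times \RR_p \times \{0\}$ in $F \times \RR_p \times \RR_q$, under which $\del \Mliou = \{q = 0\}$, $\Mliou = \{q \ge 0\}$, $\omega = \omega_F + dp \wedge dq$, and $p$ is the leaf coordinate of hypothesis (2).

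The symplectic normal form is not yet enough, since convexity is a statement about the Liouville (contact) geometry at infinity; this is the step where hypothesis (1) does the work and where I expect the main difficulty to lie. Here I would upgrade the Gotay normal form to a Liouville normal form near infinity. Using that the Liouville field $Z$ is tangent to $\del \Mliou$ at infinity, I would first produce, exactly as in \cite[Lemma~2.5]{gps} (equivalently, via the construction of Proposition~\ref{prop:equivalence-intro}), a defining function $I$ for $\del \Mliou$ with $\del \Mliou = I^{-1}(0)$, with $X_I$ generating the characteristic foliation, and with $Z(I) = \alpha I$ near infinity for some $0 < \alpha \le 1$; the global section and $\RR$-fibers supplied by (2) are exactly what make $I$, and its conjugate $R$ with $\{R,I\} = 1$ (the leaf coordinate), globally defined with no holonomy obstruction. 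A Moser-type homotopy, interpolating $\lambda$ to the model primitive $\lambda_F + \tfrac12(p\,dq - q\,dp)$ while keeping $Z$ tangent to $\del \Mliou$, then identifies a neighborhood of $\del \Mliou$ near infinity with $F \times \{\Re \ge 0\} \subset F \times \CC$ as a Liouville manifold-with-boundary, whose second factor is the standard two-dimensional half-plane sector.

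Granting this product-of-Liouville-sectors model near infinity, convexity of $\Sigma$ becomes transparent. The ideal boundary $\del_\infty \Mliou$ is modeled on $\del_\infty\!\left(F \times \{\Re \ge 0\}\right)$, and $\Sigma$ corresponds to the trace of $F \times \del\{\Re \ge 0\}$ at infinity. The half-plane $\{\Re \ge 0\}$ is the standard convex two-dimensional sector, and its transverse contact vector field extends over the product to a contact vector field on $\del_\infty \Mliou$ transverse to $\Sigma$; equivalently, the product structure exhibits an $\RR$-invariant neighborhood of $\Sigma$ in the sense of Giroux. This is exactly the content of the intrinsic convexity criterion of Corollary~\ref{cor:convexity}, applied to the section and $\RR$-fibers coming from (2), so $\Sigma = \del_\infty \Mliou \cap \del \Mliou$ is convex in $\del_\infty \Mliou$. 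Finally, having verified tangency at infinity, triviality of the characteristic foliation, and convexity of the ceiling corner, $\Mliou$ satisfies the defining conditions of \cite{gps}, so it is a Liouville sector in their sense, as claimed. The principal obstacle is the passage in the second paragraph from Gotay's symplectic germ to a genuinely Liouville product model near infinity: one must promote the symplectic Moser homotopy to one compatible with the radial (Liouville) dynamics, which is precisely where the tangency hypothesis (1) and the homogeneity $Z(I) = \alpha I$ are indispensable.
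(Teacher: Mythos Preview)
Your opening move---using hypothesis (2) together with Gotay's coisotropic embedding theorem to put a neighborhood of $\del M$ into the symplectic normal form $\omega = \pi_F^*\omega_F + dR \wedge dI$---matches the paper exactly. But two of your citations are circular. Proposition~\ref{prop:equivalence-intro} is stated for Liouville $\sigma$-sectors, whose Definition~\ref{defn:liouville-sector-intro}\eqref{item. convex corner} already \emph{assumes} convexity of the ceiling corner; the construction of a function $I$ with $Z(I)=\alpha I$ in Section~\ref{sec:intrinsic} uses precisely the contact vector field supplied by that convexity hypothesis to build the controlling functions $h_\pm$ near infinity. Likewise Corollary~\ref{cor:convexity} is stated as an immediate corollary \emph{of} Theorem~\ref{thm:GPS-question}, so it cannot be invoked in the proof.

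The substantive gap is the Moser step, which you correctly flag as the principal obstacle but do not resolve. The paper does \emph{not} upgrade the Gotay model to a Liouville product $F \times \C_{\Re\ge 0}$. Your proposed homotopy would require integrating the flow of $X_g$ where $\lambda - \lambda_{\text{model}} = dg$, but $g$ is a priori unbounded at infinity and there is no mechanism ensuring the flow is complete or extends across the ideal boundary---which is exactly the object whose structure is in question. A genuine Liouville product near $\del M$ is strictly stronger than convexity and is not asserted anywhere in the paper.

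What the paper does instead is keep the given $\lambda$ and analyze it directly in the Gotay coordinates. Writing $\lambda = \pi_F^*\lambda_F + I\,dR + dh_V$ on a neighborhood $V$ of $\{R=0\}$ and decomposing the Liouville vector field as $Z = X_F + a\,\partial_R + b\,\partial_I$, hypothesis~(1) forces $\partial h_V/\partial I|_{\{R=0\}} = 0$, so $h_V|_{\{R=0\}}$ is $I$-independent and hence has $C^1$ norm bounded by some constant $C'$ on a small enough $V'$. The defining equation $Z \rfloor \omega = \lambda$ evaluated against $\partial_R$ then gives $-b = I + \partial h_V/\partial R$, so $b \neq 0$ once $|I| > C'$. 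This makes $I^{-1}(N)$ contact-type for large $N$, and on it the Hamiltonian vector field $X_I = \partial_R$ is tangent, satisfies $\lambda(X_I)\neq 0$, and is transverse to $\{R=0\}$---furnishing the contact vector field that certifies convexity of $F_\infty$. The moral is that convexity requires only a \emph{derivative bound} on the discrepancy $h_V$ between $\lambda$ and the model primitive, not its vanishing; your route demands the latter, which is more than is available.
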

We mention that $F$ itself naturally becomes a Liouville manifold. (See Section \ref{subsec:liouville-structure-leaf-space}
for the proof.) 

The main task then is to construct a contact vector field transverse to the
ceiling corner
$$
\del_\infty M \cap \del M =: F_\infty
$$
in the contact manifold $\del_\infty M$. 
We make our construction of the aforementioned contact vector field as a consequence of the following
refinement of Gotay's neighborhood normal form theorem for $\del M \subset M$ when
$\del M \cong F\times \R$ with the given hypotheses. 

\begin{prop}\label{prop:normal-form-intro}
Let $u + \sqrt{-1} v$ be the standard coordinates of $\C$ satisfying $v = t\circ \text{\rm pr}$.
Put
$$
R = u \circ \pi_\C \circ \widetilde \Psi, \quad I = v \circ \pi_\C\circ \widetilde \Psi
$$
on $F \times \C$. We denote by $\Psi: \del M \to F \times \{0\} \times \R$ a diffeomorphism given by
the hypothesis in the theorem.

Then there are neighborhoods $U$ of $\del M \cong F \times \R$ and $V = F \times (-\delta, 0] \times \R$ of 
$F \times \{0\} \times \R \subset F \times \C$
for some $\delta > 0$, and a deformation of $\Psi$, still denoted by $\Psi$,
which extends to a diffeomorphism pair 
$$
(\widetilde \Psi,\Psi): (U,\del M) \to (V, F \times \{0\} \times \R)
$$ 
satisfying
\be\label{eq:omegaV1}
\widetilde \Psi^*\lambda = \widetilde \pi_F^*\lambda_F  -I\,  dR, \quad 
\widetilde \Psi_*(Z) = Z_F \oplus I \frac{\del}{\del I}
\ee
on $\{I > C\} \cap V'$ for a sufficiently large $C> 0$ where $Z_F$ is the Liouville vector field of
the Liouville manifold $F$.
In particular we have $F\cong \del M \cap \del_\infty M = \del(\del_\infty M)$, which is convex in $\del_\infty M$.
\end{prop}
An important ingredient of the proof 
is some stability theorem proved in Appendix \ref{sec:stability-sectors} of Liouville sectors which 
extends the one proved in \cite[Theorem 9.2]{oh:sectorial}.

The following equivalence theorem is an immediate corollary of Theorem \ref{thm:GPS-question-intro}.

\begin{theorem}\label{thm:GPS-equivalence-intro} Let $(M,\lambda)$ be a Liouville manifold
with boundary.  Suppose the Liouville vector field $Z$ of $\lambda$ is tangent to $\del M$
near infinity. Then the followings are equivalent:
\begin{enumerate}
\item $(M,\lambda)$ is a Liouville sector in the sense of \cite{gps}.
\item $(M,\lambda)$ is a Liouville $\sigma$-sector.
\item There is a diffeomorphism $\del M = F \times \RR$ sending the characteristic foliation to the foliation by
leaves $\RR \times \{p\}$.
\end{enumerate}
\end{theorem}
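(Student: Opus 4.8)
The plan is to establish the cycle of implications $(2)\Rightarrow(3)\Rightarrow(1)\Rightarrow(2)$, which together yield the asserted equivalence; the standing hypothesis that $Z$ is tangent to $\del M$ at infinity is kept in force throughout, and the only genuinely new inputs are the two cited theorems.

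I would read off $(2)\Rightarrow(3)$ directly from Theorem \ref{thm:equivalence-intro}. Item (\ref{item.equivalence commutative diagram}) of that theorem produces a diffeomorphism $\Psi\colon \del M \to F\times\RR$ sitting in a commutative square with $\pi_F\circ\Psi=\psi\circ\pi$. Since the fibers of $\pi$ are exactly the leaves of the characteristic foliation $\cD_{\del M}$, while the fibers of $\pi_F$ are the slices $\{f\}\times\RR$, the map $\Psi$ carries $\cD_{\del M}$ onto the foliation by these $\RR$-slices; interchanging the two factors of $F\times\RR$ then gives precisely the diffeomorphism demanded in $(3)$.

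The implication $(3)\Rightarrow(1)$ is nothing other than Theorem \ref{thm:GPS-question-intro}: under the standing assumption on $Z$, a splitting $\del M=F\times\RR$ intertwining the characteristic foliation with the $\RR$-leaves forces $\del_\infty M\cap\del M$ to be convex in $\del_\infty M$, which is exactly the property making $(M,\lambda)$ a Liouville sector in the sense of \cite{gps}. To close the cycle I would verify the three axioms of a Liouville $\sigma$-sector for a \cite{gps} sector, i.e. $(1)\Rightarrow(2)$. Condition (\ref{item. foliation section}) is immediate from \cite[Lemma 2.5]{gps}: the splitting $\del M\cong F\times\RR$ it provides exhibits $\RR$-fibers together with the smooth section $F\times\{0\}$. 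This same splitting is an instance of $(3)$, so Theorem \ref{thm:GPS-question-intro} re-supplies the convexity of $\del_\infty M\cap\del M$, which is Condition (\ref{item. convex corner}); and Condition (\ref{item. contact boundary}) records the behavior of $Z$, whose tangency at infinity is the standing hypothesis.

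The substantive geometry is entirely absorbed into the two cited theorems — above all into the convexity output of Theorem \ref{thm:GPS-question-intro}, the affirmative answer to \cite[Question 2.6]{gps} — so that the present statement is formal once these are in hand. The one point I expect to require genuine care is not a new estimate but the matching of the two axiom systems: one must confirm that the convexity \emph{hypothesis} in Condition (\ref{item. convex corner}) of the $\sigma$-sector definition is literally the convexity \emph{conclusion} of Theorem \ref{thm:GPS-question-intro}, and that the normalization of the Liouville vector field $Z$ along the finite part of $\del M$ in Condition (\ref{item. contact boundary}) is compatible with the structure carried by a \cite{gps} sector near its boundary. Once these dictionary entries are in place, the equivalence of $(1)$, $(2)$ and $(3)$ follows formally from the closed cycle.
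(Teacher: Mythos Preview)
Your proof is correct and follows essentially the same approach as the paper, which states the result as ``an immediate corollary of Theorem~\ref{thm:equivalence-intro} and Theorem~\ref{thm:GPS-question-intro}.'' The one minor redundancy is in your $(1)\Rightarrow(2)$ step: convexity of $\del_\infty M\cap\del M$ is already part of the \cite{gps} definition (see the third bullet of Definition~\ref{defn. sectorial hypersurface-gps}), so you need not route through $(3)$ and Theorem~\ref{thm:GPS-question-intro} to recover it---the only genuine content in $(1)\Rightarrow(2)$ is extracting the section from the $F\times\RR$ splitting supplied by \cite[Lemma~2.5]{gps}, exactly as the paper's Remark following Corollary~2.22 explains.
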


\subsection{Transverse coisotropic collections and Liouville $\sigma$-sectors with corners}

The definition of Liouville $\sigma$-sector can be extended to the case with corners.
Here we start with giving another equivalent definition
of that of the \emph{sectorial hypersurface} from \cite[Definitions 9.2 \& 9.14]{gps2}.
Our  definition is intrinsic in that it utilizes only the canonical
presymplectic geometry of null foliation of the hypersurface in the symplectic manifold
$(M,\omega)$, which is \emph{coisotropic}. Now the existence of
the defining data of function $I$ or of the diffeomorphism $\del M \to F \times \R$
appearing in the definition of Liouville sectors in \cite{gps}
is a `property' of Liouville $\sigma$-sector in our definition.

We start with giving the aforementioned equivalent definitions.

\begin{defn}[$\sigma$-sectorial hypersurface]\label{defn:sectorial hypersurface}
Let $(M, \lambda)$ be a Liouville manifold with boundary (without corners).
Let $H \subset M$ be a \emph{cooriented} smooth hypersurface such that its completion $\overline H$ has the union
$$
(\del_\infty M \cap \overline H)\cup (\overline H \cap \del M) = : \del_\infty H \cup \del \overline H
$$
as its (topological) boundary.
$H$ is a \emph{$\sigma$-sectorial hypersurface} if it satisfies the following:
\begin{enumerate}
\item $Z$ is tangent to $H$ near infinity,
\item $H_\infty (= \del_\infty H) = \del_\infty M \cap H \subset \del_\infty M$
is a convex hypersurface of the contact manifold
$\del_\infty M$,
\item The canonical projection map $\pi:H \to \cN_H$ has a continuous section and
each of its fiber is homeomorphic to $\RR$.
\end{enumerate}
\end{defn}

 The definition of Liouville $\sigma$-sectors with corners
strongly relies on the general intrinsic geometry of the transverse coisotropic collection.
Study of this geometry
in turn strongly relies on the coisotropic calculus and Gotay's coisotropic embedding
theorem of general \emph{presymplectic manifolds} \cite{gotay}.

\begin{defn}[Transverse coisotropic collection]\label{defn:transverse coisotropic collection-intro}
Let $(M,\lambda)$ be a Liouville manifold with corners.
Let $H_1, \ldots, H_m \subset M$ be a collection of cooriented hypersurfaces $Z$-invariant near infinity,
that satisfies
\begin{enumerate}
\item\label{item. transverse intersection sectorial-intro} The $H_i$ transversely intersect,
\item\label{item. coisotropic sectorial-intro} All pairwise intersections $H_i \cap H_j$ are coisotropic.
\end{enumerate}
\end{defn}

Denote the associated codimension $m$ corner by
$$
C = H_1 \cap \cdots \cap H_m
$$
and by $\cN_C$ the leaf space of the null-foliation of the coisotropic submanifold $C$.
Then we prove in Subsection \ref{subsec:integrable} that for each choice of sections $\sigma=\{\sigma_1, \cdots, \sigma_m\}$,
\begin{itemize}
\item there is a natural fiberwise $\RR^m$-action on $C$
which is a simultaneous linearization of the characteristic flows of the sectorial hypersurfaces $H_i$'s.
\item each fiber is diffeomorphic to $\RR^m$ utilizing the standard construction of action-angle variables
in the integrable system.
\end{itemize}
(See \cite{arnold:mechanics} and Corollary \ref{cor:contractible-fiber} for the relevant discussion.)
This leads us to the final definition of Liouville $\sigma$-sectors with corners.

\begin{defn}[Liouville $\sigma$-sectors with corners]\label{defn:intrinsic-corners-intro}
Let $M$ be a manifold with corners equipped with
a Liouville one-form $\lambda$. We call $(M,\lambda)$ a \emph{Liouville $\sigma$-sector with corners}
if at each corner $\delta$ of $\del M$, the corner can be expressed as
$$
C_\delta := H_{\delta,1} \cap \cdots \cap H_{\delta,m}
$$
for a collection $\{H_{\delta,1}, \cdots, H_{\delta,m}\}$ such that
\begin{enumerate}
\item it is a  transversely coisotropic,
\item each fiber of the canonical projection
$$
\pi_{C_\delta}: C_\delta \to \cN_{C_\delta}
$$
is contractible.
\end{enumerate}
We call such a corner a \emph{$\sigma$-sectorial corner of codimension $m$}.
\end{defn}

We will show that each choice of $\sigma$ will canonically provide an equivariant splitting data
$$
(F, \{(R_i,I_i)\}_{i=1}^m), \quad d\lambda = \omega_F \oplus \sum_{i=1}^m dR_i \wedge dI_i
$$
on $\nbhd(C_\delta) \cong F \times \C_{\Re \geq 0}^m$
for $\sigma$-sectorial corners that is equipped with the Hamiltonian $\RR^m$-action
whose moment map is precisely the coordinate projection
$$
\nbhd(C) \to \R_{\geq 0}^m; \quad x \mapsto (R_1(x), \ldots, R_m(x)).
$$
(See Theorem \ref{thm:splitting-data-corners} for the precise statement.)

We also prove the following equivalence result.

\begin{theorem} Definition \ref{defn:intrinsic-corners-intro} is equivalent to that
of Liouville sectors with corners from \cite{gps2}.
\end{theorem}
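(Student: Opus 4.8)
The plan is to prove both implications by localizing near each corner and matching the two local normal forms, leaning on the codimension-one equivalence already established in Theorem \ref{thm:GPS-equivalence-intro} together with the corner splitting data produced in Theorem \ref{thm:splitting-data-corners}. Recall that the definition of a Liouville sector with corners in \cite{gps-2} is itself local along the boundary: it is formulated in terms of a collection of sectorial hypersurfaces which, near each codimension-$m$ corner, are required to admit a common product normal form $F \times \C_{\Re \geq 0}^m$, with $F$ a Liouville sector (with corners of lower codimension) and the boundary faces given by the coordinate loci $\{R_i = 0\}$. Thus it suffices to show that, near each corner, Definition \ref{defn:intrinsic-corners-intro} produces exactly such a product model, and conversely that such a product model satisfies the clean-coisotropic-collection and contractible-fiber hypotheses of Definitions \ref{defn:clean coisotropic collection-intro} and \ref{defn:intrinsic-corners-intro}.

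For the implication from Definition \ref{defn:intrinsic-corners-intro} to the definition of \cite{gps-2}, I would start at a $\sigma$-sectorial corner $C_\delta = H_{\delta,1} \cap \cdots \cap H_{\delta,m}$ and invoke Theorem \ref{thm:splitting-data-corners} directly: it supplies the equivariant splitting $\nbhd(C_\delta) \cong F \times \C_{\Re\geq 0}^m$ with $d\lambda = \omega_F \oplus \sum_{i=1}^m dR_i \wedge dI_i$ and with the Hamiltonian $\RR^m$-action whose moment map is the coordinate projection $x \mapsto (R_1(x),\dots,R_m(x))$. In this model the faces $H_{\delta,i}$ are precisely the loci $\{R_i = 0\}$, so the product normal form required in \cite{gps-2} is immediate. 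It then remains to check that each $H_{\delta,i}$ is individually a sectorial hypersurface and that the collection is mutually compatible. The first is the codimension-one statement: restricting attention to a single face and applying Theorem \ref{thm:GPS-equivalence-intro} identifies $H_{\delta,i}$ with $F_i \times \RR$ carrying a trivial characteristic foliation and $Z$ tangent at infinity, hence sectorial. The compatibility of the collection—commuting characteristic flows and the correct intersection behavior of the faces—is exactly the content of the simultaneous linearization and the moment-map description, so no further work is needed there.

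For the converse, I would begin from the \cite{gps-2} product model $\nbhd(C) \cong F \times \C_{\Re\geq 0}^m$ and verify the hypotheses of Definition \ref{defn:clean coisotropic collection-intro}. Clean intersection of the faces $H_i = \{R_i = 0\}$ is built into the corner chart of a manifold with corners. Coisotropy of the pairwise intersections $H_i \cap H_j$ is a linear computation in the model factor $(\C^m, \sum_k dR_k \wedge dI_k)$: the locus $\{R_i = R_j = 0\}$ has symplectic orthogonal spanned by $\partial_{I_i}, \partial_{I_j}$, which is contained in its tangent space, so the intersection is coisotropic (the $F$-factor contributes a symplectic summand and does not affect the computation), while $Z$-invariance at infinity is part of the \cite{gps-2} data. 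Finally the null-foliation of $C = \{R_1 = \cdots = R_m = 0\} \cong F \times \RR^m$ has leaves the $\RR^m$-orbits, so its leaf map $\pi_C \colon C \to \cN_C \cong F$ has fibers $\RR^m$, which are contractible; this is precisely the condition in Definition \ref{defn:intrinsic-corners-intro}.

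The main obstacle, as in the codimension-one case, is reconciling the presymplectic geometry along the corner with the Liouville geometry at infinity, and ensuring that the two encodings of mutual compatibility agree. The definition of \cite{gps-2} encodes compatibility of the faces through the existence of a single product normal form, which forces the characteristic flows to commute, whereas Definition \ref{defn:intrinsic-corners-intro} encodes it through the clean-coisotropic-collection condition; bridging these is exactly what the simultaneous linearization of Theorem \ref{thm:splitting-data-corners} accomplishes, and the heavy technical lifting is already carried out there. The remaining delicate point is the at-infinity behavior: one must verify that triviality of the null foliation forces the requisite convexity of the ceiling corners $\del_\infty M \cap \del M$, which is supplied by Theorem \ref{thm:GPS-question-intro} applied face-by-face, after which Gotay's coisotropic embedding theorem guarantees that the local models are canonical—unique up to symplectomorphism fixing $C$—and hence patch consistently over the corner.
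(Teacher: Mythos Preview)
Your proposal is correct and follows essentially the same route as the paper. The forward direction is exactly what the paper does in the body (the second Theorem labeled \texttt{thm:equivalence}, in the section on canonical splitting data): invoke Theorem \ref{thm:splitting-data-corners} at each corner to produce the functions $(R_i,I_i)$ and the splitting $\nbhd(C_\delta)\cong F\times\C_{\Re\ge 0}^m$, which furnishes the data \ref{item. I} of Definition \ref{defn:sectorial-collection}. The converse is, as in the codimension-one case, treated as essentially tautological: the functions $I_i$ from \ref{item. I} (equivalently, the product normal form) trivialize the null foliation of $C_\delta$ with fiber $\RR^m$, hence contractible. One small point: the paper states the \cite{gps-2} definition via the functions $I_i$ satisfying \eqref{eq:function-Ii} rather than directly via a product normal form, so your phrasing of what you are matching against is one step removed from what the paper literally writes down; but since the two are equivalent (this is the content of \cite[Lemma 9.4]{gps-2}, cited in Definition \ref{defn:sectorial-collection}), this does not affect the argument.
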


We refer to Definition \ref{defn:sectorial-collection} for the comparison between
Definition \ref{defn:intrinsic-corners-intro} and the definition of Liouville sectors with
corners from \cite{gps2}. The following is straightforward from our definition.
(Compare this with the discussion on the product arund
 Lemma 2.21 \cite{gps} and in Section 6 \cite{gps2}.)

\begin{prop}[Proposition \ref{prop:product}]
The set of Liouville $\sigma$-sectors with corners forms a monoid: for any two
Liouville sectors  with corners  $M_1, \, M_2$ the product $M_1 \times M_2$
 is canonically a LIouville $\sigma$-sector with corners.
\end{prop}
This monoidal property has been used to construct a monoidal property of
wrapped Fukaya category generated by \emph{(gradient) sectorial Lagrangians}
in \cite{oh:sectorial,oh:gradient-sectorial}.

\subsection{Automorphism group of Liouville $\sigma$-sectors with corners}

Thanks to Theorem \ref{thm:equivalence-intro} or Theorem \ref{thm:GPS-equivalence-intro},
our definition of Liouville $\sigma$-sectors with corners enables us to
give a natural notion of Liouville automorphisms of Liouville sectors (with corners) from \cite{gps,gps2}
which is similar to the case without boundary.

We start with the following observation that every symplectic diffeomorphism of
$(M,\del M)$ induces a presympletic diffeomorphism on $\del M$
and hence preserves the characteristic foliation of $\del M$.

This  enables
 us to define the ``structure'' of Liouville $\sigma$-sectors (Definition \ref{defn:geometric-structure}),
and  to identity its automorphism group $\aut(M,\lambda)$ in the same way as
for the Liouville manifold case.

\begin{defn}[Automorphisms group $\aut(M,\lambda)$] Let $(M,\lambda)$ be
a Liouville $\sigma$-sector, possibly with corners.
We call a diffeomorphism $\phi: (M,\del M) \to (M,\del M)$
a Liouville automorphism if $\phi$ satisfies the following:
$$
\phi^*\lambda = \lambda + df
$$
for a compactly supported function $f: M \to \RR$.
We denote by $\aut(M,\lambda)$ the set of automorphisms of $(M,\lambda)$.
\end{defn}
Obviously $\aut(M,\lambda)$ forms a topological group which is a subgroup of $\symp(M,d\lambda)$,
the group of symplectic diffeomorphisms of $(M,d\lambda)$.

\begin{remark}\label{rem:stratawise-presymplectic} 
The above discussion on the automorphism can be
naturally extended to the case of with corners.
Recall that a manifold with corners $X$ is (pre)symplectic if there is a stratawise
(pre)symplectic form $\omega$, i.e., a collection of (pre)symplectic forms
$$
\{\omega_\alpha\}_{\alpha \in I}
$$
that is compatible under the canonical inclusion map of strata
$$
\iota_{\alpha\beta}: X_\alpha \hookrightarrow X_\beta, \quad \alpha < \beta
$$
i.e., $\omega_\alpha = \iota_{\alpha\beta}^*\omega_\beta$. Here $I$ is the
POSET that indexes the strata of the stratified manifold $X$. By definition, a diffeomorphism
between two manifolds with corners preserves dimensions of the strata.
\end{remark}

\medskip

Finally we would like to mention that different geometric nature of $(\del_\infty M, \xi_\infty)$ and $(\del M, \lambda_\del)$
is partially responsible for the difficulty, as
manifested in its construction given in \cite{oh:sectorial}, of the construction of
a pseudoconvex pair $(\psi, J)$ in a neighborhood
$$
\nbhd(\del_\infty M \cup \del M)
$$
such that the almost complex structures $J$ is amenable to the (strong) maximum principle)
for the (perturbed) pseudoholomorphic maps into the Liouville sectors.
We anticipate that together with the local nature of the  maximum principle proof 
of $C^0$-estimates from \cite{oh:sectorial} and its natural monoidality of  
Liouville $\sigma$-sectors will facilitate the study of K\"unneth-type functors and 
simplicial descents of wrapped Fukaya categories.
(See \cite{gps-microlocal}, \cite{oh-tanaka:actions}, \cite{asplund} and others for the relevant study.)

The current paper is the Part I of the arXiv posting arXiv:2110.11726(v1)-(v3) of the title
``Monoid of Liouville sectors with corners and its intrinsic characterization''. The paper is
now split into two, Part I becoming the current paper and Part II split away
 to a separate paper \cite{oh:gradient-sectorial}.

\bigskip

{\bf Acknowledgments:}
The present work is supported by the IBS project IBS-R003-D1. We would like to thank Hiro Lee Tanaka
for his collaboration on the study of Liouville sectors and for useful comments on the
preliminary draft of the present work. We would like to express our sincere thanks to
the unknown referee for pointing out several mistakes in our presentation
which have led us to revising the proof given in Section \ref{sec:GPS-question} and
significantly improving the overall presentation of the paper.
\bigskip

\noindent{\bf Conventions:}
\begin{itemize}
\item Hamiltonian vector field $X_H$: $X_H \rfloor \omega = dH$,
\item Canonical one-form $\theta_0$ on $T^*Q$: $\theta_0 = \sum_{i=1}^n p_i dq_i$,
\item Canonical symplectic form $\omega_0$ on $T^*Q$: $\omega_0 =  d(-\theta) = \sum_{i=1}^n {dq_i \wedge dp_i}$,
\item Liouville one-form on $(T^*Q, \omega_0)$: $\lambda = -\theta= -\sum_{i=1}^n p_i dq_i$,
\item Symplectization $SC$ of contact manifold $(C,\theta)$: $SC = C \times \R$ with $\omega = d(e^s \pi^*\theta)$.
Here note that \emph{we write the $\RR$-factor after the $C$-factor}.
\item Contact Hamiltonian: The contact Hamiltonian of contact vector field $X$ on a contact manifold
$(M,\theta)$ is given by $-\theta(X)$. (See \cite{oh:contacton-Legendrian-bdy} for the same convention
adopted in the general framework of contact dynamics.)
\end{itemize}

\bigskip

\noindent{\bf Notations:}
\begin{itemize}
\item $\overline M$: the completion of $M$ which is $M \coprod \del_\infty M$.
\item $DM$: the union $\del_\infty M \cup \del M$ in $\overline M$.
\item $F_\infty: = \del_\infty M \cap \del M$: the ideal boundary of $\del M$.
\item $\del_\infty M = \del_\infty^{\text{\rm Liou}}M$: the ideal boundary of a Liouville manifold $M$ (or sector).
\item $\aut(M,\lambda)$: The group of Liouville diffeomorphisms of Liouville $\sigma$-sector $(M,\lambda)$.
\item $\omega_\del = d\lambda_\del$ : The induced presymplectic form on $\del M$ with $\lambda_\del := \iota^*\lambda$.
\item $\aut(M,\lambda_\del)$: The group of pre-Liouville diffeomorphisms of
exact presymplectic manifolds $(M,d\lambda_\del)$.
\item $H$ : a $\sigma$-sectorial hypersurface $H \subset M$.
\item $H_\infty = \del_\infty M \cap H$: the ideal boundary $H$.
\item Constants $N$ and $C$: We consistently use the letter $N$ to write the level of
symplectization radial function $s$ and the letter $C$ for the level of the characteristic flow
of the sectorial hypersurface or for the $\R$-coordinate in the product $F \times \R$.
\end{itemize}

\section{Preliminaries}

We start with the case without corners but with nonempty boundary $\del M$, postponing
the study of the case with corners till Section \ref{sec:sectors-with-corners}.

For the comparison, we recall the definition of Liouville sectors in \cite{gps}.
In fact we will consider the definition of sectorial hypersurfaces in \cite[Definition 9.2]{gps2}
and restrict that to the sectorial boundary of a Liouville domain.

To facilitate our exposition, we utilize Giroux's notion of the \emph{ideal completion}
of the Liouville domain $(W,\lambda)$.
\begin{defn}[Ideal completion $\overline M$ \cite{giroux}] \label{defn:giroux}
\begin{enumerate}
\item
An \emph{ideal Liouville domain} $(W,\omega)$ is a domain endowed with an ideal Liouville structure $\omega$.
\item
The \emph{ideal Liouville structure} is an exact symplectic form on $\Int W$ admitting a primitive
$\beta$ such that: For some (and then any) function $u:W \to \RR_{\geq 0}$ with regular level set
$\del_\infty W = \{u=0\}$, the product $u \beta$ extends to a smooth one-form $\lambda$ 
on $W$ which induces a contact form on $\del W$.
\item When a Liouville manifold $(M,\beta)$ is Liouville isomorphic to
$(\Int W,\beta)$, we call $W$ the ideal completion of $M$ and denote it by $\overline M$.
\end{enumerate}
\end{defn}
\begin{remark}
Firstly, this definition provides a natural topology and smooth
structure on the completion $\overline M$ and a Liouville structure on $M(=\Int W)$ as
an open Liouville manifold. Secondly it also provides a natural
class of Liouville diffeomorphisms on $M$ as the restriction of
diffeomorphisms of $\overline M = W$. (See \cite{giroux}.)
\end{remark}

For a (noncompact) Liouville manifold $(M,\lambda)$ (without boundary)
its ideal boundary, denoted by $\del_\infty M$,
is defined to be the set of asymptotic rays of Liouville vector field $Z$.
Then the \emph{ideal completion} is the coproduct
$$
\overline M = M \coprod \del_\infty M
$$
equipped with the obvious topology. We refer readers to \cite{giroux} for complete details.
For readers' convenience, we provide some summary thereof  in Appendix \ref{sec:giroux}
that are to be used later in the study of Theorem \ref{thm:GPS-question-intro}.

\subsection{Liouville manifolds with boundary and orientations}
\label{subsc:liouville-with-bdy}

When $(M,\lambda)$ is a Liouville sector with boundary $\del M$,
its ideal boundary is still well-defined by the $Z$-invariance requirement near infinity put on $\del M$
in the definition of Liouville sectors \cite{gps} and so is its completion $\overline M$. Then we have the formula for the topological boundary
$$
\del \overline M = \del_\infty M \cup \del M.
$$
To ease our exposition, we often abuse our notation
$$
DM: = \del_\infty M \cup \del M
$$
for the coproduct $\del_\infty M \coprod \del M$ after the present section,
as long as there is no danger of confusion. Likewise
we also abuse the notation like
$$
\del_\infty M \cap H : = \del_\infty M \cap \overline H
$$
for ideal boundary of $\sigma$-sectorial hypersurface $H$
where the intersection is actually taken as a subset of  $\overline M$. 
For the simplicity of notation, we will also use
\be\label{eq:H-infty}
H_\infty: = \del_\infty M \cap \overline H
\ee
similarly as we denoted
$F_\infty = \del_\infty M \cap \del M$ when $H = \del M$.

\subsubsection{Null foliation}

We  recall  the well-known fact that each hypersurface $H \subset \Mliou$
in a symplectic manifold $(\Mliou,\omega)$
carries the canonical characteristic foliation $\cD$. The definition of this foliation is based on the
fact that any hypersurface $S$ of $(M,\omega)$ is a \emph{coisotropic} submanifold in that
\begin{enumerate}
\item We have
$$
(T_x H)^{\omega_x} \subset T_x H,
$$
for any $x \in H$, where $(T_x H)^{\omega_x}$ is the $\omega_x$-orthogonal complement
$$
(T_x H)^{\omega_x}: = \{v \in T_x M \mid \omega_x(v,w) = 0 \, \forall w \in T_xH\}.
$$
\item Let $\iota_H: H \to M$ be the inclusion map and
$$
\ker \iota_H^*\omega_x:= \{v \in T_x H \mid \omega_x(v, w) = 0 \, \forall w \in T_xH\}
$$
has constant rank 1 for all $x \in H$.
\end{enumerate}
Then we denote $\cD = \ker \iota_H^*\omega$ which defines a 1-dimensional (integrable) distribution of $H$, and
call it the characteristic distribution or the null distribution of $H$.
We denote by $\cN_H$ the leaf space of
the associated foliation. It is also well-known that $\cD$ carries a transverse symplectic structure
which induces one on the leaf space
\be\label{eq:NH}
\cN_{H}: = H /\sim
\ee
chart-wise. With slight abuse of notation, we will also denote by $\cD$ the associated foliation.
Of course, the quotient topology of a leaf space may not be Hausdorff in general.
We will show that under the conditions laid out in Definition~\ref{defn:liouville-sector-intro}, the aforementioned transverse symplectic form, as well as its smooth structure, descends to
the leaf space.

We denote the ideal boundary of $H$ (reltaive to $Z$) by $\del_\infty H=: H_\infty$.
Then
$$
H_\infty = \del_\infty \Mliou \cap \overline H.
$$
At each point $x \in \overline H \cap \nbhd(\del_\infty M) \supset H_\infty$, we have a natural exact sequence
	\be\label{eq:D-orientation}
0 \to \cD_x \to T_x H  \to T_x H/\cD_x \to 0.
	\ee
The quotient carries a canonical symplectic bilinear form and so carries a natural
symplectic orientation.

\begin{choice}[Orientation of $\cD$]\label{choice:orientation-CD}
 Let $H \subset M$ be a proper $\sigma$-sectorial
hypersurface. Make a choice of orientation on the trivial line bundle $\cD \to H$.
\end{choice}

\begin{defn}[Presymplectic orientation on $H$]\label{defn:presymplectic-or}
Let $\cD \to H$ be given an orientation $o_{\cD}$
on a neighborhood of $H_\infty$ in $\del_\infty M$.
We call the orientation on $TH|_{H\cap \nbhd(\del_\infty M)}$ given by the direct sum orientation
$$
T_x H|_{H_\infty} = (T_x H/\cD_x) \oplus \cD_x, \quad x \in H\cap \nbhd(\del_\infty M)
$$
the \emph{presymplectic orientation} of $H$ relative to $o_{\cD}$.
\end{defn}

\begin{example}[{$F^\pm_\infty$ on $T^*[0,1]$}] Now consider the case of
the cotangent bundle $M = T^*[0,1]$ of the closed interval $[0,1]$
equipped with the Liouville form
\be\label{eq:Liouville-form}
\lambda = - p\, dq.
\ee
(This is the negative of the standard Liouville one-form $pdq$ in the cotangent bundle.)
The standard orientation of the interval induces a diffeomorphism $\Mliou \cong [0,1]_q \times \RR_p$
which carries the symplectic orientation induced by the symplectic form
$$
dq \wedge dp.
$$
(We alert the readers that this is the negative of the convention $dp \wedge dq$ used by \cite{gps}.)
The boundary $\del \Mliou \cong \{0,1\} \times \RR_p$  has 2 connected components.
\emph{The characteristic foliation's orientation is compatible with the vector field $\frac{\del}{\del p}$.}
Note that the Liouville vector field of the Liouville form \eqref{eq:Liouville-form}
on $T^*[0,1] \cong [0,1]_q \times \RR_p$ is given by the Euler vector field
\be\label{eq:Liouville-vector-field}
\vec E:= p\frac{\del}{\del p}
\ee
on $T^*M$ which vanishes at $p = 0$. So each leaf $\{q\} \times \RR_p$
of the foliation consists of 3 different orbit sets of the Liouville vector field
$$
\RR_+ = (0,\infty), \quad \{0\},  \quad \RR_- = (-\infty, 0).
$$
We may identify $\del_\infty \Mliou$ with two disjoint copies of $[0,1]$ at ``$p= \pm \infty$.'' $F_\infty$ consists of four points, which we will denote by $(0,\pm\infty)$ and $(1,\pm \infty)$ again using the informal notation allowing $p$ to attain $\pm \infty$. Under this notation, we have that
	\be
	F^+_\infty = \{(0,-\infty) , (1,\infty)\},
	\qquad\text{and}\qquad
	F^-_\infty = \{(0,\infty), (1,-\infty)\}.
	\ee
\end{example}

\begin{example}[$\dim Q \geq 2$]
More generally, let $Q = Q^n$ be a connected $n$-manifold with boundary and let $M = T^*Q$. The inclusion $T(\del Q) \into TQ$ induces a quotient map $T^*Q|_{\del Q} \to T^*(\del Q)$ of bundles on $\del Q$; the kernel induces the characteristic foliation on
$$
T^*Q|_{\del Q} = \del \Mliou.
$$
 Informally: At a point $(q,p) \in \del M$, the oriented vector defining the characteristic foliation is the symplectic dual to an inward vector normal to $\del Q$. For example, identifying $Q$ near $\del Q$ with the
right half plane with final coordinate $p_n$, in standard Darboux coordinate
$(q,p)$, the characteristic foliation is generated by
${\frac {\del}{\del p_n}}$.
\end{example}

\subsection{Convexity of $H_\infty =\del_\infty M \cap H$ and contact vector field}

By applying the notion of $\sigma$-sectorial hypersurface from Definition \ref{defn:sectorial hypersurface} to the boundary $\del M \subset M$,
we introduce the following definition. This is the counterpart of
the definition of \emph{sectorial hypersurface} given in \cite[Definition 9.2]{gps2}.

\begin{defn}[Liouville $\sigma$-sector]\label{defn:liouville-sigma-sector}
Let $M$ be a noncompact manifold with boundary such that its completion $\overline M$ has
(topological) boundary given by the union
$$
\del_\infty M \cup \del M = DM
$$
and $\del_\infty M \cap \del M$ is the codimension two corner of $\overline M$. $M$ is called a
\emph{Liouville $\sigma$-sector} if its boundary $\del M \subset M$ is a
$\sigma$-sectorial hypersurface in the sense of Definition \ref{defn:sectorial hypersurface}.
\end{defn}

To avoid some confusion with the corners in $\del M$, we call the intersection
$$
\del_\infty M \cap \del \overline M
$$
the \emph{ceiling corner}.  This is the corner of the ideal completion $\overline M$ of $M$ of codimension 2.
(We will call the genuine corners of $M$ the \emph{sectorial corners} in Section \ref{sec:sectors-with-corners}
when we consider the Liouville sectors with corners.)

Recall that $\del_\infty M$ is naturally oriented as the ideal boundary of symplectic manifold
$M$ with $Z$ pointing outward along $\del_\infty M$.

We take a contact-type hypersurface
$S_0 \subset M$  and identify a neighborhood $\nbhd(\del_\infty M)$ with
the half $S_0 \times [0,\infty)$ of the symplectization of the contact manifold
$(S_0, \iota_{S_0}^*\lambda)$.
We denote
\be\label{eq:H0}
H_0 = S_0 \cap H.
\ee
Then considering the Liouville embedding $S_0 \times [0,\infty) \hookrightarrow M$,
we can decompose $M$ into
$$
M = (M \setminus \nbhd(\del_\infty M) )\cup \nbhd(\del_\infty M)
$$
so that
\begin{itemize}
\item  $Z = \frac{\del}{\del s}$ for the symplectization form $d(e^s \pi^* \iota_{S_0}^*\lambda)$
of the contact manifold $(S_0, \iota_{S_0}^*\lambda)$
on $S_0 \times [0,\infty)$,
\item we may identify the one-form $\iota_{S_0}^*\lambda$ as
a contact form of $\del_\infty M$ by the natural diffeomorphism $S_0 \cong \del_\infty M$
induced by this Liouville embedding $S_0 \times [0,\infty) \hookrightarrow M$.
\end{itemize}

By the convexity hypothesis of $H_\infty:= H \cap \del_\infty \Mliou$ in $\del_\infty \Mliou$,
there exists a contact vector field $\eta$ of the contact structure $(\del_\infty M,\xi_\infty)$
on a neighborhood of $H_\infty$ in $\del_\infty M$
that is transverse to $H_\infty$.

Since there are different sign conventions in the literature in defining the contact Hamiltonian
associated to a contact vector field, we set our sign convention as follows
by adopting the one used by the present author in \cite{oh:contacton-Legendrian-bdy} and
its sequels, which also coincides with that of \cite{dMV}.

\begin{defn}[Contact Hamiltonian]\label{defn:contact-Hamiltonian} We call the function
$$
h: = - \theta(\eta)
$$
the \emph{contact Hamiltonian} associated to the contact vector field $\eta$.
\end{defn}

\begin{remark}\label{rem:dividing-set}
It is well-known that a choice of contact vector field $\eta$ transverse to $H_\infty$ in $\del_\infty M$,
gives rise to a decomposition of $H_\infty$ into
\be\label{eq:Hinfty-decompose}
H_\infty = H_\infty^+ \sqcup \Gamma_\eta \sqcup H_\infty^-
\ee
where $H_\infty^\pm$ and $\Gamma_\eta$ are defined by
$$
H_\infty^\pm = \{x \in H_\infty \mid  \pm \theta(\eta(x)) > 0\}, \quad
 \Gamma_\eta = \{x \in H_\infty \mid \theta(\eta(x)) = 0\}.
$$
(Recall that $\Gamma_\eta$ is called the \emph{dividing set} of $\eta$ on $H_\infty$.
See \cite{giroux} for a general study of convex hypersurface.)
Other than the existence of the contact vector field transverse to the contact distribution,
which is the defining property of the convexity of hypersurfaces,
this \emph{contact geometry of ideal boundary $\del_\infty M$} does not enter
in our study of \emph{presymplectic geometry of coisotropic submanifold, $\del M$}, which
makes our affirmative anwser to the question \cite[Question 2.6]{gps} plausible.
See Remark \ref{rem:GPS-question} below for a further elaboration.
\end{remark}

\section{Sectional characterization of sectorial hypersurfaces}
\label{sec:intrinsic}

Let $H \subset M$ be a $\sigma$-sectorial hypersurface of a Liouville $\sigma$-sector $(M,\lambda)$.
Equip the leaf space $\cN_{H}$ with the quotient topology induced by the projection $\pi = \pi_H: H \to \cN_{H}$.  The main goal of this section is to equip this quotient space with
a canonical Liouville structure induced from that of $M$.

\subsection{The leaf space is a topological manifold}
\label{subsec:topological}

Before providing a smooth atlas on $\cN_{H}$, our first order of business is to prove
the existence of topological manifold structure thereon. This is the most technical step
towards the goal of the section as common in the study of general topology argument.
The proof of this proposition occupies the rest of this subsection.

\begin{theorem}\label{thm:NH} Let $H$ be a $\sigma$-sectorial hypersurface.
The leaf space $\cN_{H}$ is a topological manifold. (In particular, $\cN_{H}$ is
second countable and Hausdorff.)
\end{theorem}

We start with the following  lemma.

\begin{lemma}\label{lem:G} There exists a neighborhood $\nbhd (\del_\infty \Mliou \cap H)$
of the ceiling corner $\del_\infty \Mliou \cap H$ in $M$ and a smooth function
\be\label{eq:G}
G: \nbhd (\del_\infty \Mliou \cap H) \to [0,\infty)
\ee
on $\nbhd (\del_\infty \Mliou \cap H)$ of $M$ that has the following properties:
\begin{enumerate}
\item $Z[G] = G$,
\item its Hamiltonian vector field $X_G$  is transverse to $H$ and
 represents the given coorientation of $H$ at each point
$x \in H \cap \nbhd (\del_\infty \Mliou \cap H)$.
\end{enumerate}
\end{lemma}
\begin{proof} By the defining data of Liouville $\sigma$-sectors, we have
\begin{itemize}
\item $H_\infty$ is convex in $\del_\infty M$,
\item  $Z$ is tangent to $H$ near infinity.
\end{itemize}
the second requirement enables us to choose a contact-type hypersurface $S_0$
far out close to $\del_\infty M$
so that $S_0 \pitchfork H$. Write the smooth hypersurface $H_0: = S_0 \cap H$ of $H$.

We take a symplectization neighborhood of $\del_\infty M$ obtained by the Liouville embedding
\be\label{eq:phiZS0}
\phi_{Z;S_0}: S_0 \times [0, \infty) \hookrightarrow M
\ee
defined by $\phi_{Z;S_0}(y,t): = \phi_Z^t(y)$. We denote by $s$
the associated radial function defined by $s(y,t): = t$. Then we have the
splitting
$$
TM|_{S_0 \times [0,\infty)}  \cong TS_0 \oplus \R \left\{\frac{\del}{\del s}\right\}
$$
and satisfies
\be \label{eq:s-S0-Z}
s^{-1}(0) = S_0, \quad Z = \frac{\del}{\del s}, \quad S_0 \cong \del_\infty M.
\ee
We also have the contact form $\theta \cong \iota|_{S_0}^*\lambda$ on $S_0$
so that we can express the Liouville form as
$$
\lambda = e^s \pi^* \theta
$$
on a neighborhood $\nbhd(\del_\infty M)$.

Using  the convexity hypothesis
of $H_\infty \subset \del_\infty M$, we can take
 a contact vector field $\eta$ on a neighborhood of $H_\infty$ in $\del_\infty M$ such that
$\eta \pitchfork H_\infty$. Take its contact Hamiltonian
 $h = - \theta(\eta)$ on a neighborhood of $H_\infty$ in $\del_\infty M$.
  (Recall the sign convention  from Definition \ref{defn:contact-Hamiltonian}
   adopted in the present paper.)
By considering the function $\pi^*h$ on a neighborhood of $H_\infty$
in $M$, we take the associated homogeneous Hamiltonian function
on the symplectization in a neighborhood of $H_\infty$ in $M$, which we  denote it by
$$
G := e^s \pi^* h
$$
which is defined on a neighborhood $H_\infty = H \cap \del_\infty M$
in $M$, say, on
$$
V \times [0,\infty) \subset s^{-1}([0, \infty)) \subset M,
$$
where $ V\subset \del_\infty M$ is an open neighborhood of $H_\infty$ in $\del_\infty M$.
Through the symplectization end Liouville embedding $S_0 \times [0,\infty) \hookrightarrow
M$, we may identify the function $h: H_\infty \to \R$ with $\pi^*h|_{\{s=0\}}: H \cap S_0 \times \{0\} \to \R$.
Then the Hamiltonian vector field $X_G|_{H \cap \{s \geq0\}}$ represents
the coorientation of $H$ compatible with the one on $H_\infty \subset \del_\infty M$
given by $\eta$.

Clearly it satisfies $Z[G] = G$ since $Z = \frac{\del}{\del s}$ thereon. This finishes the proof.
\end{proof}

\begin{remark}\label{rem:GPS-question} We would like to mention that 
a priori the characteristic foliation of a general hypersurface satisfying that $\pi: H \to \CN_H$ 
satisfying Condition (3) in Definition \ref{defn:sectorial hypersurface} could be very wild without 
the other conditions (1), (2). One of the consequences of the convexity of
$H_\infty$ in $\del_\infty M$ is the presence of the function $G$ on $\nbhd(\del_\infty M \cap H)$ which gives rise to
the taming of the behavior of the characteristic foliation of $H$
in a neighborhood $\nbhd(\del_\infty \Mliou) \cap H$. Indeed, such a taming
is also a sufficient condition for $H_\infty$ to be convex, which is precisely what
\cite[Qestion 2.6]{gps} is asking about.  We provide its affirmative answer
in Theorem \ref{thm:GPS-question} of the present paper.
\end{remark}

We fix a Riemannian metric $g$ on $M$ that is
$Z$-invariant near infinity $\del_\infty M$. More explicitly we require the metric to satisfy
\begin{itemize}
\item Near $H$, we require it to have the form
$$
g = g_H \oplus dv^2
$$
on the neighborhood $H \times (-\epsilon, \epsilon) \hookrightarrow M$ where $v$ is
the coordinate of $(-\epsilon,\epsilon)$.
\item Near $\del_\infty M$ on the symplectization end,  we require the metric to satisfy
$$
g = g_{S_0} \oplus ds^2
$$
on $S_0 \times [0, \infty)$ where $g_{S_0}$ is any Riemannian metric on $S_0$,
recalling $Z = \frac{\del}{\del s}$ on this region.
\item Near $H_\infty  = H \cap \del_\infty M$, we require that the above two choices
are compatible in that $g_H|_{H_\infty} = g_{S_0}|_{H_\infty}$ and has the form
$$
g = g_{H_\infty} \oplus du^2 \oplus ds^2.
$$
\end{itemize}

In addition, using the coorientation hypothesis on $H \subset M$, we fix a coorientation.
(For the case of $H = \del M$, we use the canonical outward coorientation.) Then we choose the
aforementioned contact vector field $\eta$ so that it defines the same coorientation as that of
the coorientation on $H \subset M$ induced by $X_G|_H$.
With the above Riemannian metric equipped with the neighborhood of $H$,
we require
\be\label{eq:symplectic-normal}
d\lambda\left(\cD_H,\frac{\del}{\del v}\right) > 0
\ee
with $\cD_H$ equipped with the one given in Definition \ref{choice:orientation-CD}:
Note that we have the exact sequence of symplectic vector bundle
\be\label{eq:exact-sequence-sympl}
0 \to \text{\rm span}\left\{\cD_H, \frac{\del}{\del v} \right\} \to TM|_H \to 
TM \Big\slash\text{\rm span}\left\{\cD_H, \frac{\del}{\del v} \right\} \to 0
\ee
where we have
$$
TM\Big\slash\text{\rm span}\left\{\cD_H, \frac{\del}{\del v} \right\} \cong TH/ \cD_H.
$$

\begin{prop}\label{prop:W}
There exists a unique vector field $W$ on $H$ that satisfies the following:
\begin{enumerate}
\item $g(W,W)\equiv 1$ and in particular $W$ is nowhere vanishing,
\item $W$ is tangent to the foliation $\cD$, and
\item The choice of $W$ is compatible with the orientation~\eqref{eq:D-orientation} of the leaves
and satisfies $d\lambda\left(W,\frac{\del}{\del v}\right) > 0$.
\end{enumerate}
\end{prop}
\begin{proof} We first recall that the Liouville vector field $Z$ is tangent to $H$ near infinity.
We define the radial coordinate $s$ as in the proof of Lemma \ref{lem:G}.

We start with defining the vector field $W \in \cD \subset TH$ on $H$ 
along the hypersurface
$s^{-1}(N) = S_0 \times \{N\} \cong S_0$ for a sufficiently large $N > 0$ in the given symplectization end.
We can express it as the sum
$$
W= Y' +  a \frac{\del}{\del s},
$$
for some function $a = a(y)$ on $S_0$, and $Y'$ tangent to $s^{-1}(N) \cap H$
for  all $N \geq 0$. Then we have
$$
0 < d\lambda \left(Y' +  a \frac{\del}{\del s},\frac{\del}{\del v}\right)
$$
and
\beastar
d\lambda \left(Y' +  a \frac{\del}{\del s},X\right) & = & 0 \quad \text{for all $ X\in TH$},\\
d\lambda \left(Y' +  a \frac{\del}{\del s},\frac{\del}{\del s}\right) & = & 0 \quad
\text{on $s^{-1}(N) \cap H$}
\eeastar
The second equation also implies $d\lambda \left(Y', \frac{\del}{\del s}\right) = 0$
since $Z = \frac{\del}{\del s}$ is tangent to $H$ for all sufficiently large $N > 0$.

Using the property that $Z$ is tangent to $H$ near infinity, we will  choose
$W$ near infinity, say for $s \geq N$ for a sufficiently large $N> 0$ so that
\be\label{eq:Wys}
W(y,s) := d\phi_Z(W(y,N)) = Y'(y) \oplus  a(y) \frac{\del}{\del s}
\ee
i.e., it is just the $s$-translation of the initial vector $W(y,0)$.
By normalizing $W$ to $W/|W|$, we may assume that $W$ has unit norm which makes
its choice unique among the vector fields tangent to $\cD$ in the orientation
given in Choice \ref{choice:orientation-CD}.

Next we would like to extend the vector field $W$  to everywhere on $H$ that still satisfies
the standing requirements (1) - (3). For this purpose, we consider equation for $W \in TH$
in the orientation from Choice \ref{choice:orientation-CD} to satisfy
\be\label{eq:W-todefine}
W \in \cD \subset TH, \quad d\lambda(W, TH) = 0, \quad d\lambda\left(W, \frac{\del}{\del v}\right) > 0.
\ee
By further requiring $|W| = 1$, the equation is uniquely solvable at
each point of $y \in H$.  This finishes the proof of
Proposition \ref{prop:W}.
\end{proof}

In the course of proving the above proposition, we have also proved the following.

\begin{cor}\label{cor:trivial-CD} \begin{enumerate}
\item The line bundle $\cD \to H$ is trivial.
\item Denote by $R: \nbhd(H) \to \R$ the defining function $R : = u$ on a neighborhood of $H$ in $M$.
Then $W = \frac{X_R}{|X_R|_g}$.
\end{enumerate}
\end{cor}
\begin{proof} Statement (1) is obvious since $W$ is nowhere vanishing section of the line bundle $\cD \to H$.
For Statement (2), we note that $X_R$ satisfies $d\lambda(X_R, TH) = dR(TH) dt(TH) \equiv 0$ and hence is
tangent to $\cD$. Furthermore we have
$$
d\lambda\left(X_R,\frac{\del}{\del u}\right) \equiv 1 > 0
$$
Then by the aforementioned uniqueness, we derive $W = \frac{X_R}{|X_R|_g}$.
\end{proof}
This corollary will be useful for the later study of intrinsic characterization
of Liouville sectors \emph{with corners}.
(Of course this is a tautological property with the original definition
of Liouville sectors from \cite{gps}.)

\begin{defn}[Leaf-generating vector field $W$ of $\cD_H$]\label{defn:W}
We call the above constructed vector field $W$ on $H$ a \emph{leaf-generating
vector field of $\cD_H$}.
\end{defn}

The next lemma states that
the leaf space $\cN_H$ is Hausdorff with respect to the quotient topology of $\pi: H \to \cN_H$.
This should be classical which  can be derived  from the property of the quotient topology and the existence 
of continuous section  $\sigma_{\text{\rm ref}}: \cN_H \to H$.  For readers' convenience,  
we give its proof in Appendix \ref{sec:Hausdorff}. 

\begin{lemma}\label{lem:Hausdorff}
The space $\cN_H$ equipped with the quotient topology of $\pi: H \to \cN_H$ is 
Hausdorff.
\end{lemma}

The next lemma shows that the presence of continuous section implies
 the triviality of the fibration $\pi: H \to \cN_H$. 

\begin{lemma}\label{lemma. M forward backward} Take a continuous section $\sigma_{\text{\rm ref}}: \cN_{H} \to H$ guaranteed by Definition~\ref{defn:liouville-sector-intro}. We write
\be\label{eq:F-ref}
F_{\text{\rm ref}}: = \Image \sigma_{\text{\rm ref}} \subset H.
\ee
Then the flow map
\be\label{eq:flow-map}
\Phi_{\text{\rm ref}}: F_{\text{\rm ref}} \times \RR \to H; \quad \Phi_{\text{\rm ref}}(y,t) =
\phi_{W}^t(\sigma_{\text{\rm ref}}(\pi(x))
\ee
is a homeomorphism.
\end{lemma}
\begin{proof} 

We will first show
\begin{enumerate}
\item
Any trajectory of $W$ eventually exits from any given compact subset $K \subset M$
both forward and backward.
\item  Moreover every leaf is a flow orbit of $W$ and vice versa.
\end{enumerate}
It is a standard fact that each leaf is second countable because the manifold $\Mliou$ is assumed to
be second countable. (This rules out the possibility for a leaf becomes a 
`Long line' \cite[pp. 71-72]{steen-seebach}.)
Note that since $W$ is regular, each leaf of $H$ of the characteristic
foliation is a flow line of the regular vector field $W$. (See \cite[Section 2.1]{candel-conlon}.)
Furthermore no leaf can be a point.
\emph{By the condition stated in Definition \ref{defn:liouville-sector-intro} (c), $W$ cannot have a nontrivial
periodic orbit either}. Therefore each flow trajectory $t \mapsto \Phi_{\text{\rm ref}}^t(y)$
in $H$ defined on $\RR$ is one-to-one, and hence $\Phi_{\text{\rm ref}}$ is a one-one map.

Furthermore there is a uniquely defined $T \in \RR$ such that
$\phi_{W}^T(\sigma_{\text{\rm ref}}(\pi(x)) = x$ for each $x \in H$.
We define a function $T: \Mliou \to \RR$ by
\be\label{eq:T}
T(x): = \text{``the reaching time of the flow of $W$ issued at $\sigma_{\text{\rm ref}}(\pi(x))$''.}
\ee
\begin{lemma}\label{lem:continuity-T}
The  function $T$ is continuous.
\end{lemma}
\begin{proof} Let $x \in M$ and set $x_0: = \sigma_{\text{\rm ref}}(\pi(x))$. Choose a foliation chart 
$(U_\alpha, \varphi_\alpha)$ with $\varphi_\alpha = (y_1, \ldots, y_{2n-1}, t)$ with $x_0 = (0,0)$ so that 
$W = \frac{\del}{\del t}$. It follows from the Hausdorff property of $\cN_H$, we may assume that
the restriction $\sigma_{\text{\rm ref}}$ to 
$\pi(U_\alpha)$ defines a (local) section of $\pi$. Since the map $\{t = 0\}$ is also the image 
of continuous section on the same domain $\pi(U_\alpha)$, we can express 
$$
F_{\text{\rm ref}}\cap U_\alpha = \{(y, t) \in U_\alpha \mid t = \sigma'(y)\}
$$
 with $y = (y_1, \cdots, y_{2n-1})$
for some continuous one-to-one map $\sigma: \{t = 0\} \cap U_\alpha \to U_\alpha$.

Write $T_x: = T(x)$ and consider the pair
$$
(U_\beta, \varphi_\beta) = (\phi_W^{T_x}(U_\alpha), \varphi_\alpha \circ  \phi_W^{T_x})
$$
which defines a foliation chart on $U_\beta$ at $x$. Write $\varphi_\beta = (y', t')$.
Then $\phi_W^{T_x} \circ \sigma_{\rm ref}$ defines another local continuous section on $\pi(U_\alpha)$,
and we have
\beastar
\phi_W^{T_x}(F_{\text{\rm ref}} \cap U_\alpha) & = &
\left( \Image \phi_W^{T_x} \circ \sigma_{\text{\rm ref} }\right)
\cap U_\beta \\
& = & \left \{(y',t') \in U_\beta \, \Big| \, t' = \sigma\left( \left( \phi_W^{T_x}\right)^{-1}(x')\right) \right\}.
\eeastar
By definition of $(U_\beta,\varphi_\beta)$, we have the relation
$$
T_{x'} = T_x + t \left( (\phi_W^{T_x})^{-1}(x')\right) - t(x)
$$
for all $x' \in U_\beta$. This formula clearly shows that $t'$ is a continuous function on $U_\beta$, and 
satisfies
$$
T(x') - T(x) = T_{x'}  - T_x = t \left( (\phi_W^{T_x})^{-1}(x')\right) - t(x)
$$
for all $x' \in U_\beta$. Since $x' \mapsto t\left((\phi_W^{T_x})^{-1}(x')\right)$ is continuous 
at any point $x'$ where the map is
defined and $ t\left(\phi_W^{T_x})^{-1}(x) \right)= t(x)$,
this explicit formula shows that $T$ is continuous at $x$. Since continuity is local,
this proves continuity of $T$.
\end{proof}
Therefore the inverse image $T^{-1}(-C,C)$ is an open subset and $H$ is an increasing union
$$
H = \bigcup_{C \in \NN} T^{-1}(-C,C)
$$
of open subset $T^{-1}(-C,C)$.

Let $K \subset H$ be any compact subset. Then $K \subset T^{-1}(-C,C)$ for some $C > 0$.
Since $|W| = 1$ and $W$ is tangent to the leaf $\ell_y$ through $y$ of the characteristic foliation, 
any point $y \in F_{\text{\rm ref}}$ we have
$$
\phi_W^{\pm(2\max\{C,|T(y)|\})}(y) \cap K = \emptyset.
$$
This proves the aforementioned claims.

Then, combining this with the aforementioned completeness, we can define another map
\be\label{eq:Psi-ref}
\Psi_{\text{\rm ref}}: H \to F_{\text{\rm ref}} \times \RR; \quad \Psi_{\text{\rm ref}}(x)
 = (\sigma_{\text{\rm ref}}(\pi(x)), T(x)).
\ee
By construction, $\Psi_{\text{\rm ref}}$ is continous and satisfies
$$
\Psi_{\text{\rm ref}} \circ \Phi_{\text{\rm ref}} = id|_H, \quad
\Phi_{\text{\rm ref}} \circ \Psi_{\text{\rm ref}} = id|_{F_{\text{\rm ref}} \times \R}
$$
This finishes the proof of Lemma~\ref{lemma. M forward backward}.
\end{proof}

Consider the leaf map $\pi_{\text{\rm ref}}: F_{\text{\rm ref}} \to \CN_H$ where
$F_{\text{\rm ref}}$ equipped with the subspace topology of $H$ and $\CN_H$ is the quotient
topology of the projection $\pi:H \to \CN_H$.

\begin{cor}\label{cor:piN homeo} The leaf map $\pi_{\text{\rm ref}}: F_{\text{\rm ref}} \to \CN_H$
is a homeomorphism.
\end{cor}
\begin{proof}
Since $\sigma_{\text{\rm ref}}: \cN_H \to H$ is a section, we have
$\pi_{\text{\rm ref}} \circ \sigma_{\text{\rm ref}} = id_{\cN_H}$ which shows
$\pi_{\text{\rm ref}}$ is surjective.

On the other hand, if $y_1 \neq y_2$ in $F_{\text{\rm ref}}$, then $\ell_{y_1} \neq
\ell_{y_2}$ since otherwise we would have
$$
y_1 = \sigma_{\text{\rm ref}}(\ell_{y_1}) = \sigma_{\text{\rm ref}}(\ell_{y_2}) = y_2
$$
which is a contradiction. This shows that $\pi_{\text{\rm ref}}$ is a bijective
continuous map.

By construction, the map
$$
\ell \mapsto \sigma_{\text{\rm ref}}(\ell); \, \CN_H \to F_{\text{\rm ref}}
$$
defines a continuous map which also satisfies
$\pi_{\text{\rm ref}} \circ \sigma_{\text{\rm ref}} = id|_{\CN_H}$,
and $\sigma_{\text{\rm ref}}\circ \pi_{\text{\rm ref}} = id|_{F_{\text{\rm ref}}}$.
This proves that the map $\sigma_{\text{\rm ref}}$ is a continuous inverse of
$\pi_{\text{\rm ref}}$.  Therefore the map $\pi_{\text{\rm ref}}$ is a homeomorphism.
\end{proof}

Now we go back to the proof of Theorem \ref{thm:NH}.

\begin{proof}[Wrap-up of the proof of Theorem  \ref{thm:NH}]
First we show the following.

\begin{lemma}\label{lem:NH-manifold}
$F_{\text{\rm ref}}$ with the subspace topology of $H$
is  locally Euclidean (and in particular, locally compact).
\end{lemma}
\begin{proof} 
To see the locally Euclidean property of $F_{\text{\rm ref}}$, let $x_0 \in F_{\text{\rm ref}}$
be any given point. We have only to note that \eqref{eq:Psi-ref} induces a homeomorphism
$$
U/\sim \,\,  \to F_{\text{\rm ref}}\cap U
$$
for a sufficiently small foliation chart $U$ containing $x_0$ where $\sim$ is the orbit equivalence with respect to
$W$. Since $U/\sim$ is homeomorphic to $\R^{2n-1}$, so is $F_{\text{\rm ref}}\cap U$. This proves that
 $F_{\text{\rm ref}}\cap U$ is locally Euclidean.
\end{proof}

Now combination of Corollary \ref{cor:piN homeo} and Lemma~\ref{lem:NH-manifold}
 finish the proof of Theorem \ref{thm:NH}.
\end{proof}

\subsection{Smooth structure on the leaf space}
\label{subsec:smooth-structure}

When the leaf space is Hausdorff and locally Euclidean, the well-known construction
of coisotropic reduction (or symplectic reduction) applies to prove existence of
 the symplectic structure on the leaf space \emph{once the smooth structure
 on the leave space is equipped.} (See \cite{abraham-marsden} for example.)
Since we also need to construct the map $\Psi$ appearing in the statement of Theorem
\ref{thm:equivalence-intro} and will also use the details of the proof later,
we provide the full details of the existence proofs of both structures below along the way
partly for readers' convenience.

The goal of this section is to prove the first item of Theorem~\ref{thm:equivalence-intro}.
We start with the following proposition whose proof will occupy entirety of this subsection.

\begin{prop}\label{prop:leaf-space-structure}
The leaf space $\cN_{H}$ carries a canonical smooth manifold structure such that
\begin{enumerate}
\item
$\pi: H \to \cN_H$ is a smooth submersion, and
\item there is a smooth diffeomorphism
$\Psi: H \to \cN_H \times \R$ which makes the following diagram commute
\be\label{eq:Psi-diagram}
\xymatrix{ H \ar[dr]_{\pi_H} \ar[rr]^{\Psi} && \ar[dl]^{\pi_1}\cN_H \times \R  \\
& \cN_H
}
\ee
\end{enumerate}
\end{prop}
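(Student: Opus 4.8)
The plan is to realize $\cN_H$ as the quotient of $H$ by the flow of the leaf-generating vector field $Z'$ of Definition~\ref{defn:Z'}, and to produce its smooth structure via the quotient manifold theorem. Recall from Lemma~\ref{lemma. M forward backward} that $Z'$ is a smooth, nowhere-vanishing, complete vector field whose flow orbits are exactly the leaves of the characteristic foliation $\cD$; hence its flow $\phi^t_{Z'}$ defines a smooth $\RR$-action on $H$ whose orbit space, with its quotient topology, is precisely the topological manifold $\cN_H$ of Proposition~\ref{prop. leaf space is topological manifold}. The strategy is: (i) show this action is free and proper; (ii) invoke the quotient manifold theorem to obtain a smooth structure on $\cN_H = H/\RR$ for which $\pi$ is a smooth submersion, giving item~(1); and (iii) trivialize the resulting principal $\RR$-bundle to obtain the diffeomorphism $\Psi$ of item~(2).

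For freeness, note that $Z'$ has no zeros, and by Lemma~\ref{lemma. M forward backward} (which uses condition~(d) of Definition~\ref{defn:liouville-sector-intro}) it has no periodic orbits; thus $\phi^t_{Z'}(x) = x$ forces $t = 0$. For properness, I would use the homeomorphism $\Psi_{\text{ref}}\colon H \to H_{\text{ref}} \times \RR$ of Lemma~\ref{lemma. Fref is leaf space}, \eqref{eq:Psi-ref}. Since $T$ records the elapsed flow-time of $Z'$ issued from $\sigma_{\text{ref}}(\pi(x))$, uniqueness of the free orbits gives $T(\phi^t_{Z'}(x)) = T(x) + t$, so under $\Psi_{\text{ref}}$ the $\RR$-action is conjugated to the standard translation $(\ell,\tau)\mapsto(\ell,\tau+t)$ on $H_{\text{ref}} \times \RR$. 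The latter is manifestly proper (properness is detected on the second factor), and properness is invariant under equivariant homeomorphism, so the $Z'$-action on $H$ is proper.

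With freeness and properness in hand, the quotient manifold theorem endows $\cN_H = H/\RR$ with a smooth structure, necessarily unique and hence canonical, for which $\pi\colon H \to \cN_H$ is a smooth surjective submersion and in fact a principal $\RR$-bundle; this is item~(1), and the induced topology agrees with the one of Proposition~\ref{prop. leaf space is topological manifold}. For item~(2), since the structure group $\RR$ is a contractible Lie group (equivalently, local sections may be averaged through a partition of unity on the paracompact base), this principal bundle is trivial and admits a smooth global section $s\colon \cN_H \to H$. Defining $\tau\colon H \to \RR$ by the condition $\phi^{\tau(x)}_{Z'}(s(\pi(x))) = x$ — smooth by the implicit function theorem applied to the smooth flow and section — the map $\Psi := (\pi,\tau)\colon H \to \cN_H \times \RR$ is a smooth diffeomorphism fitting into the commuting triangle \eqref{eq:Psi-diagram}.

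The genuine analytic content, namely Hausdorffness of the leaf space and the exclusion of closed leaves, limit cycles, and non-proper leaves, was already dispatched in the previous subsection, so the main obstacle here is the bookkeeping of properness; once the conjugation to a translation action is set up, everything reduces to the standard quotient manifold theorem and the triviality of $\RR$-bundles. A more hands-on alternative, avoiding the quotient manifold theorem, is to build charts on $\cN_H$ directly from flow-box neighborhoods of $Z'$: each local transversal is a smooth chart, and the transition maps, being holonomy maps realized by flowing along $Z'$ for a smoothly varying time, are smooth by smooth dependence of ODE solutions on initial data. In that approach item~(1) follows from compatibility with the already-established Hausdorff, second-countable topology, and item~(2) proceeds exactly as above.
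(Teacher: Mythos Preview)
Your proof is correct and takes a genuinely different route from the paper's. The paper does not invoke the quotient manifold theorem; instead it builds the smooth atlas on $\cN_H$ by hand from a coherent foliated atlas of $H$, restricted to a sub-collection covering the smooth transversal $H_N^+ = s^{-1}(N) \cap H$ (the image of the smooth section $\sigma_N^+$ of Corollary~\ref{cor:choice-sigmaN+}). The transverse coordinates $(y_1,\ldots,y_{2n-2})$ of those foliated charts descend to a smooth atlas $[\cO']$ on $\cN_H$, and smoothness of $\pi$ is checked directly (Lemma~\ref{lemma. pi is submersion}) by showing $f\circ\pi$ is smooth for any smooth $f$, using the flow $\phi_{Z'}^T$ to transport an arbitrary foliated chart to one in $\cO'$. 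The diffeomorphism $\Psi$ is then written down explicitly using $\sigma_N^+$ and the flow-time function $t(x)$.

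Your approach via free proper $\RR$-actions is cleaner and more conceptual; it packages Hausdorffness, smoothness, and the submersion property into a single appeal to the quotient manifold theorem, and the triviality of principal $\RR$-bundles gives the product decomposition for free. What the paper's approach buys is that the smooth structure is built directly out of the \emph{specific} smooth section $\sigma_N^+$ sitting near infinity, which is exactly the section used in the subsequent subsections \ref{subsec:symplectic-structure}--\ref{subsec:liouville-structure-leaf-space} to define the reduced symplectic form and the canonical Liouville form $\lambda_{\cN_H}$ (via $\lambda_N^+ = (\iota_{H_N^+})^*\lambda$ and the $N$-independence of Lemma~\ref{lem:independent-of-N}). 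In your version the abstract section $s$ furnished by bundle triviality need not be $\sigma_N^+$, so you would still have to pass to that explicit section when constructing the Liouville structure downstream. Your ``hands-on alternative'' in the final paragraph is essentially what the paper does.
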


We follow the standard notation of~\cite{candel-conlon} in our discussion of foliations.
It follows from a well-known result in foliation theory that the foliation $\cF$ is
determined by its holonomy cocycle
$\gamma = \{\gamma_{\alpha\beta}\}_{\alpha, \beta \in \mathfrak U}$ with
$$
\gamma_{\alpha\beta}: y_\beta(U_\alpha \cap U_\beta) \to y_\alpha(U_\alpha \cap U_\beta).
$$
arising from the transverse coordinate map $y_\alpha: U_\alpha \to \FF^{2n-2} = \RR^{2n-2}$ or $\HH^{2n}$.

Each $y_\alpha$ is a submersion and $\gamma_{\alpha\beta}$ is given by $y_\alpha = y_\alpha(y_\beta)$
in coordinates. (See e.g., \cite[Definition 1.2.12]{candel-conlon}.)
Furthermore for the null foliation $\cF$ of the coisotropic submanifold $H$, we can choose
a foliated chart $\cU = \{(U_\alpha,\varphi_\alpha)\}_{\alpha \in \mathfrak U}$ so that the associated cocycle elements $\gamma_{\alpha\beta}$ become symplectic, i.e., the foliation $\cF$ carries a transverse symplectic structure.
We refer readers to the proof of Proposition \ref{prop:leaf-space-structure} below for the details.

\begin{remark} When $H$ has corners, the foliated chart $B = B_\tau \times B_{\pitchfork}$
means that the
\emph{tangential factor} $B_\tau$ of the foliated chart has no boundary but the \emph{transverse factor}
$B_\pitchfork$ has a boundary. (See e.g., \cite[Definition 1.1.18]{candel-conlon} for the definition.)
\end{remark}

We will first show that the above holonomy cocycle naturally descends to a smooth atlas on $\cN_{H}$
\emph{under the defining condition of $\sigma$-sectorial hypersurface above,
especially in the presence of a
continuous section of the projection $\pi_H:H \to \cN_{H}$.}

For this purpose, we consider a coherent regular foliated atlas $\{\varphi_{\alpha}: U_\alpha \to \RR^{2n-1}\}$,
and its associated foliation cocycle $\gamma = \{\gamma_{\alpha\beta}\}$
(see e.g., \cite[Section 1.2.A]{candel-conlon}).

By considering a refinement  $\{U_{\alpha'}\}$ of the given
covering, we can choose a collection of foliated charts
$\varphi_{\alpha'}: U_{\alpha'} \to  \RR^{2n-2} \times \RR$ of the form
\be\label{eq:yi'H}
( y_1, \ldots, y_{2n-2},t)
\ee
whose transverse coordinate $(y_1, \ldots, y_{2n-2})$ satisfies
\be\label{eq:yi'H-chart}
dt(W) \equiv 1.
\ee
We take a maximal such collection which we denote by
\be\label{eq:cO'}
\cO' = \{(\varphi_{\alpha'},U_{\alpha'})\}.
\ee
By the definition of transverse coordinates $(y_1,\cdots, y_{2n-2})$ of the foliated chart, it follows that
the collection thereof defines a {\em smooth} atlas of $\cN_{H}$. We write the resulting
atlas of $\cN_{H}$ by
\be\label{eq:CO'}
[\cO']:= \{[\varphi_{\alpha'}]: [U_{\alpha'}]\ \to \RR^{2n-2}\}.
\ee
\begin{lemma}\label{lemma. pi is submersion}
The projection map $\pi: H \to \cN_{H}$ is a
smooth submersion.
\end{lemma}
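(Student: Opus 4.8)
The plan is to verify smoothness and the submersion property \emph{locally}, chart by chart, with respect to the atlas $[\cO']$ of \eqref{eq:cO'} that was used to define the smooth structure on $\cN_H$. First I would record the behavior near $H_N^+$, where everything is transparent: in each foliated chart $\varphi_{\alpha'}=(h,y_1,\dots,y_{2n-2})\in\cO'$ the plaque through a point is the $h$-coordinate line $\{y_1,\dots,y_{2n-2}=\text{const}\}$, so in these coordinates $\pi$ reads as the coordinate projection $(h,y_1,\dots,y_{2n-2})\mapsto(y_1,\dots,y_{2n-2})$ onto the chart $[\varphi_{\alpha'}]$ of $\cN_H$. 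This is manifestly a smooth submersion, so on the open set $\bigcup_{\alpha'}U_{\alpha'}\supset H_N^+$ the claim is immediate.

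Next I would treat an arbitrary point $x\in H$. By Lemma~\ref{lemma. M forward backward} the leaf through $x$ eventually exits $H_{\text{\rm mid}}$ into $\nbhd^+(\del_\infty M)\cap H$, and by Proposition~\ref{prop. piN homeo} it meets $H_N^+$ in exactly one point, lying in some reference chart $(U_{\alpha'},\varphi_{\alpha'})\in\cO'$. Choosing a foliated chart $(U_\beta,\varphi_\beta)$ of the coherent regular foliated atlas around $x$, I would connect $U_\beta$ to $U_{\alpha'}$ by a finite chain of overlapping plaques running along that leaf; the associated composite holonomy cocycle element $\gamma_{\alpha'\beta}$ is, by standard foliation theory (see \cite[Section 1.2]{candel-conlon}), a smooth local diffeomorphism of transversals. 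In these coordinates $[\varphi_{\alpha'}]\circ\pi|_{U_\beta}=\gamma_{\alpha'\beta}\circ(\text{projection onto the }y_\beta\text{-factor})$, which exhibits $\pi$ as smooth near $x$.

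Finally, for the submersion property I would differentiate this local representative: $d(\text{projection})$ is surjective onto the transverse $\RR^{2n-2}$ and $d\gamma_{\alpha'\beta}$ is an isomorphism, so $d\pi_x$ is surjective onto $T_{\pi(x)}\cN_H$ for every $x$. The main obstacle is the bookkeeping in the middle step: I must ensure the holonomy transport from an arbitrary transversal to the reference transversal near $H_N^+$ is well-defined and smooth on an \emph{open} set of leaves, not merely along the single leaf through $x$. This is exactly where the global structure established earlier is needed---completeness of the leaf-generating vector field $Z'$ (Definition~\ref{defn:Z'}) together with the bijectivity of $\pi_N^+$ (Proposition~\ref{prop. piN homeo}) guarantees that every nearby leaf reaches $H_N^+$ exactly once, so the composite cocycle is defined uniformly on a neighborhood of $y_\beta(x)$ and the resulting transition is genuinely a smooth submersion.
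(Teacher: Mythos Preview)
Your proposal is correct and follows essentially the same approach as the paper: both show that in any foliated chart the map $\pi$ is the transverse coordinate projection composed with a smooth diffeomorphism landing in a reference chart of $[\cO']$, hence a smooth submersion. The only stylistic difference is that the paper transports an arbitrary chart $(U_\beta,\varphi_\beta)$ into $\cO'$ directly via the time-$T$ flow map $\phi_{Z'}^T$ of the complete leaf-generating vector field (invoking maximality of $\cO'$), whereas you phrase the same transport as a composite holonomy cocycle $\gamma_{\alpha'\beta}$ along a finite chain of plaques; these are the same geometric operation, and your closing remark correctly identifies that the well-definedness on an open set of leaves is exactly what the completeness of $Z'$ and Proposition~\ref{prop. piN homeo} supply.
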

\begin{proof} To show smoothness of $\pi$, we will show that for any smooth function $f: \cN_{H} \to \RR$
the composition $f\circ \pi$ is smooth.
For this purpose, at any point $x$, we consider the foliated chart
$\varphi_\alpha: U_\alpha \to \RR^{2n-1}$ given above in \eqref{eq:yi'H}.

Let $f: \cN_{H}  \to \RR$ be any smooth function on $\cN_{H}$.
With respect to the aforementioned foliated atlas of $H$,
we will show that $f \circ \pi$ is smooth at
every point $x \in H$.

\emph{If $x$ is contained in $U_{\alpha'}$,}
we have
$$
(f\circ \pi) \circ (\varphi_{\alpha'})^{-1}(y_1, \cdots, y_{2n-2},t)
= f \circ [\varphi_{\alpha'}]^{-1}(y_1,\cdots, y_{2n-2})
$$
The right hand side is smooth in the variables $y_1, \cdots, y_{2n-2}$ by the
hypothesis on $f$, and does not depend on $t$-variable. This in particular implies that
the left hand map $(f\circ \pi) \circ (\varphi_{\alpha'})^{-1}$ is smooth at $x$.

\emph{Otherwise,} let $(\varphi_\beta, U_\beta)$ be a foliation chart at $x$. We take a flow map
$\phi_{W}^T$ satisfying $y: = \phi_{W}^T(x) \in U_\beta'$ for some
chart $(\varphi_{\beta'}, U_{\beta'}) \in \cO'$ at $y$ given by
$$
\left(U_{\beta'} = \phi_{W}^T(U_{\beta}), \quad \varphi_{\beta'} = \varphi_\beta \circ (\phi_{W}^T)^{-1}\right)
$$
which is contained in $\cO'$ by the maximality of the collection $\cO'$.

Therefore the map $(f\circ \pi) \circ \varphi_{\beta'}^{-1}$ is smooth at $y = \phi_{W}^T(x) \in U_{\beta'}$.
We can factorize $f \circ \pi$ into
$$
f\circ \pi = \left((f\circ \pi) \circ \varphi_{\beta'}^{-1}\right) \circ
\left((\varphi_{\beta'} \circ \phi_{W}^T|_{U_\beta})\right)
$$
which is a composition of two smooth maps and so smooth at $x$.
This implies $f\circ \pi$ is smooth at $x$.
This finishes the proof of smoothness $\pi \circ f$ for all smooth
function $f: \cN_{H} \to \RR$ and hence proves that  $\pi$ is smooth.

Submersivity of $\pi$ is
obvious by the above construction.
\end{proof}

\subsection{Construction of a smooth section: smoothing}
\label{subsec:smoothing}

Finally, we would like to improve the existence of continuous section of $\pi: H \to \cN_H$ to
a smooth one $\sigma^{\text{\rm sm}}: \cN_H \to H$. For this purpose, we apply the
`standard mollifer smoothing and a partition of unity'. However  a priori the set of 
sections of the fibration $H \to \cN_H$ is not a linear space which prevents us from
directly implementing the smoothing of the sections.  

The first  order of business for our purpose is to reduce the problem of smoothing 
to that of smoothing a section of certain smooth line bundle. 
For this purpose, we need to choose a
collection of the atlas of foliated atlases of $\cN_H$ that is compatible with the 
flow of the leaf-generating vector field $W$ on $H$.

For the simplicity of notation and exposition, we write the maximal atlas $\cO'$ 
chosen in the previous subsection back as 
$\cO$ and the atlas of $\cN_H$ given in \eqref{eq:CO'} without prime.

Thanks to the property \eqref{eq:yi'H-chart},  the transition map 
$$
\varphi_{{\alpha\beta}} = \varphi_\alpha \circ \varphi_\beta^{-1}: \varphi_\beta(U_\alpha \cap U_\beta)\to \varphi_\alpha(U_\alpha \cap U_\beta)
$$
associated to the covering $\cO$  has the form
\be\label{eq:varphialphabeta}
\varphi_{\alpha\beta}(y,t) = (\psi_{\alpha\beta}(y),t + c_{\alpha\beta}(y))
\ee
where $y = (y_1, \cdots, y_{2n-2})$ on $\varphi_\alpha(U_\alpha \cap U_\beta)$
for some smooth functions $c_{\beta\alpha}$ and $\psi_{\beta\alpha}$
on $\varphi^\alpha(V_\alpha \cap V_\beta)$.

A direct translation of the cocycle condition of $\{\varphi_{\alpha\beta}\}$ gives 
rise to the following identites for $c_{\alpha\beta}$.

\begin{lemma} \label{lem:c-alphabeta} Let $\psi_{\alpha\beta} $ be  the transition map 
for the transverse coordinate charts of $\varphi_\alpha$ which is given by
 $$
\psi_\alpha \circ \psi_\beta^{-1}: \psi_\beta([U_\alpha] \cap [U_\beta])
 \to \psi_\alpha([U_\alpha] \cap [U_\beta]).
 $$
Then the collection $c_{\alpha\beta}$ satisfies
\be\label{eq:c-alphagamma}
c_{\alpha\gamma} = c_{\beta\gamma} + c_{\alpha\beta} \circ \psi_{\beta\gamma}
\ee
 In particular, $c_{\alpha\alpha} \equiv 0$ for all $\alpha$.
\end{lemma}

The rest of this subsection will be occupied by the proof of the following.
\begin{prop} \label{prop:smoothing}
There exists  a smooth section $\sigma^{\text{\rm sm}}: \cN_H \to H$ and a
diffeomorphism $\Psi: H \to \cN_H \times \R$ such that
\be\label{eq:smooth-sigma}
\sigma^{\text{\rm sm}}(\ell) = \Psi^{-1}(\ell,0).
\ee
which makes the  diagram  \eqref{eq:Psi-diagram} commute.
\end{prop}

We first provide some general discussion on the coordinate representation of
sections of $\pi:H \to \cN_H$.
Let $\sigma: \cN_H \to H$ be a continuous section of $\pi$ and
$T_\sigma: H \to \R$ be the continuous function associated to $\sigma$
given in \eqref{eq:T}. Then we have
\be\label{eq:Fref}
F_{\text{\rm ref}} := \Image \sigma = T_\sigma^{-1}(0)
\ee
and a homeomorphism $\Psi_\sigma: H \to \cN_H \times \R$ of the type
$$
\Psi_\sigma(x) = (\pi_H(x), T_\sigma(x))
$$
whose inverse $\Phi_\sigma: \cN_H \times \R \to H$ is given by the flow map
$$
\Phi_\sigma(\ell,t) = \phi_{W}^t(\sigma(\ell))
$$
such that $T_\sigma(\phi_{W}^t(x)) = t$ for all $x \in F_{\text{\rm ref}}$.

We take a collection  $\{(U_{\alpha},\varphi^\alpha)\}$ 
with $\varphi^\alpha: U_\alpha \to \R^{2n-1}$
of foliated charts of $H$ that covers $F_{\text{\rm ref}} = T^{-1}(0)$
each element of which is centered at a point in $F_{\text{\rm ref}}$.
We write
$$
\varphi^\alpha = (y^\alpha, t^\alpha) = ( y_1^\alpha, \cdots, y_{2n-2}^\alpha, t^\alpha).
$$
Thanks to the requirement \eqref{eq:yi'H-chart}, we must have
\be\label{eq:talphabeta}
t^\alpha = t^\beta + c_{\alpha\beta}(y^\beta)
\ee
on $U_\alpha \cap U_\beta$. (See \eqref{eq:varphialphabeta}.)

 Let $\sigma$ be the given continuous section.
On each such a chart $(U_{\alpha}, \varphi_\alpha)$ with 
$\varphi_\alpha = (y^\alpha, t^\alpha)$,
the level set $T_\sigma^{-1}(0)$ of the continuous function $T_\sigma$
can be locally represented as
$$
F_{\text{\rm ref}} \cap U_{\alpha} = \{x \in  U_{\alpha} \mid
t^\alpha = f_\alpha(y^\alpha)\}, \quad y^\alpha = (y_1, \cdots, y_{2n-2}) \in V_\alpha \subset \R^{2n-2}
 $$
for some continuous function $f_\alpha = f_\alpha(y_1, \ldots, y_{2n-2})$
that satisfies
\be\label{eq:falpha}
\begin{cases}
 T\circ \varphi_{\alpha\beta}^{-1}(y,t) = t -f_\alpha(y),\\
f_\alpha(0,\cdots, 0) = 0
\end{cases}.
\ee
The transverse  coordinates $(V_\alpha, \psi_\alpha)$
induce a smooth chart on $[U_\alpha] \subset \cN_H$, and the function $f_\alpha$
induces a continuous function $f_\alpha'$ thereon.  Note that the section $\sigma$
can be expressed  in terms of its local representatives 
$\{\sigma_\alpha: = \sigma|_{[U_\alpha]}\}$: we require them to satisfy
$$
\varphi_\alpha(\sigma_\alpha(\ell))  =  (\psi^\alpha(\ell), f_\alpha(\ell))
$$
in terms of the coordinate charts $([U_\alpha], \psi^\alpha)$ of $\cN_H$ and
$(U_\alpha, (y^\alpha,t_\alpha))$ of $H$.
It follows from the above discussion that
to define a global section out of the collection $\{\sigma^\alpha\}$, the collection should
satisfy 
 \be\label{eq:g-compatibility-alphabeta}
 g_\alpha \circ \psi_{\alpha\beta}=  g_\beta + c_{\alpha\beta}
 \ee
by \eqref{eq:talphabeta}.
 
 We summarize the above discussion into the following.

 \begin{lemma}\label{lem:trivializing}
  A section of $\pi_H: H \to \cN_H$ is characterized by the collection
 of maps $\{g_\alpha\}$ and $\{c_{\alpha\beta}\}$ with
 $g_\alpha: \psi_\alpha([U_\alpha]) \to \R$, $ \, c_{\alpha\beta}: \psi_\alpha([U_\alpha] \cap [U_\beta]) \to \R$
 that satisfy
 \eqref{eq:g-compatibility-alphabeta},  or equivalently
 \be\label{eq:g-beta}
 g_\beta
 =  g_\alpha  \circ \psi_{\alpha\beta} - c_{\alpha\beta}
 \ee
 on $\psi_\beta([U_\alpha] \cap [U_\beta])$ and vice versa.
 \end{lemma}
 \begin{proof} For the proof of \eqref{eq:g-beta},
 we apply Lemma \ref{lem:c-alphabeta} to \eqref{eq:g-compatibility-alphabeta}
 and get
 $$
 c_{\beta\alpha} \circ \psi_{\beta\alpha}^{-1}
 = c_{\beta\alpha} \circ \psi_{\alpha\beta} = c_{\beta\beta} - c_{\alpha\beta}= - c_{\alpha\beta}.
$$
 Then we rewrite \eqref{eq:g-compatibility-alphabeta} into
\beastar
 g_\beta
& =  & (g_\alpha  + c_{\beta\alpha}) \circ \psi_{\beta\alpha}^{-1}
= (g_\alpha  + c_{\beta\alpha}) \circ \psi_{\alpha\beta} \\
& = & g_\alpha  \circ \psi_{\alpha\beta}  + c_{\beta\alpha}\circ \psi_{\alpha\beta}\\
& = & g_\alpha \circ \psi_{\alpha\beta}- c_{\alpha\beta}
\eeastar
which finishes the proof.
\end{proof}

 By exponentiating \eqref{eq:g-beta}, we get
 $
 e^{g_\alpha} \circ \psi_{\alpha\beta} = e^{c_{\alpha\beta}} e^{g_\beta}
 $
 which is equivalent to
 \be\label{eq:salpha}
  e^{g_\alpha} \circ \psi_\alpha = e^{c_{\alpha\beta} \circ \psi_\beta} e^{g_\beta} \circ \psi_\beta.
 \ee
If we set  $s_\alpha = e^{g_\alpha}\circ \psi_\alpha$ and 
$
g_{\alpha\beta} = e^{c_{\alpha\beta} \circ \psi_\beta},
$
the equation becomes $s_\alpha = g_{\alpha\beta} s_\beta$ on $[U_\alpha] \cap [U_\beta]$.

\begin{lemma} The collection $\{g_{\alpha\beta}\}$ is a $\R_+$-valued smooth
cocycle.
\end{lemma}
\begin{proof} By definition of $c_{\alpha\beta}$, it is a smooth function.
The equation \eqref{eq:c-alphagamma} is equivalent to
$$
c_{\alpha\gamma} \circ \psi_\gamma = c_{\beta\gamma} \circ \psi_\gamma 
+ c_{\alpha\beta}\circ \psi_\beta.
$$
By exponentiating this equation, we obtain
$$
g_{\alpha\gamma} = g_{\beta\gamma} g_{\alpha\beta} =   g_{\alpha\beta}g_{\beta\gamma}.
$$
Furthermore since $c_{\alpha\alpha} = 0$, we have $g_{\alpha\alpha} = 1$.
This finishes the proof.
\end{proof}

This shows that the collection $\{g_{\alpha\beta}\}$ defines a real oriented \emph{smooth}
line bundle  on $\cN_H$, and $\{s_\alpha\}$ associated to the 
local representatives $\{f_\alpha\}$ of the given section $\sigma$
defines a nowhere vanishing \emph{continuous} section thereof.

\begin{remark} This line bundle can be also described as follows. The presence of 
leaf-generating vector field $W$ equips each leaf with the structure of an
oriented 1 dimensional real affine space. A choice of section of $\pi: H \to \cN_H$
then it  identifies each leaf with the real line $\R$.
Then the bundle is nothing but the tautological line bundle of $\cN_H$.
\end{remark}

We denote this  \emph{smooth} oriented line bundle by $\CL$. Lemma \ref{lem:trivializing}
shows that this collection also provides $\CL$ with
 a trivialzing cover and hence defines a \emph{smooth} trivialization
$$
\CL \to \cN_H \times \R.
$$
 We summarize the above discussion into the following.
 
 \begin{lemma}  Consider the collections $\{g_{\alpha\beta}\}$ and
  $\{s_\alpha\}$ defined by  
   $$
 g_{\alpha\beta} = e^{c_{\alpha\beta}\circ \psi_\beta}, \quad
  s_\alpha = e^{f_\alpha\circ \psi_\alpha}
  $$
of continuous $\R_+$-valued functions respectively. 
Then the collection  $\{s_\alpha\}$ defines  a nowhere vanishing continuous section
 of  the \emph{smooth} oriented line bundle $\CL$. We denote the associated
 global section of $\cL$ by $s_\sigma$.
 \end{lemma}
  
We are now ready to complete the proof of Proposition \ref{prop:smoothing}.

\begin{proof}[Wrap-up of the proof of Proposition \ref{prop:smoothing}]  

 We would like to construct a \emph{smooth} section or the collection $\{g_\alpha\}$
 satisfying \eqref{eq:g-beta}, knowing the existence of this continuous section $\sigma$.
 For this purpose, we have only to find a smooth approximation of the section $s_\sigma$ of
 the line bundle $\CL$, which is a standard process by taking the mollifier smoothing whose
 details is now in order. 
 
 We denote by $s_{\sigma;\alpha}$ the local representative of $s_\sigma$ determined by $e^{f_\alpha} \circ \psi_\alpha$, i.e., we will characterize the section $s_\sigma$ 
 by the collection $\{s_\alpha: [U_\alpha] \to \R \}$ that satisfy
 $$
s_\alpha = g_{\alpha\beta}\,  s_\beta.
$$  
For this purpose, without loss of any generality,
we assume $\psi_\alpha([U_\alpha]) = I^{2n-2}$ with $I = (-1,1)$
for all $\alpha$, and take a family of mollifier $\{\rho_\delta\}_{\delta > 0}$ supported 
in $I^{2n-2}$. We then take the collection $\{s_\alpha\}$  by setting 
$$
s_\alpha = h_\alpha^\delta\circ   \psi_\alpha
$$
for the mollifier smoothing of  the functions $\{e^{f_\alpha}\}$ which are defined by
$$
h_\alpha^\delta = e^{f_\alpha} * \rho_\delta
$$
for all $\alpha$. Here $*$ is the
standard convolution product defined by
$$
a* b(x) : = \int_{\R^{2n-2}} a(x-y) b(y)\, dy
$$
for two real-valued functions $a,  \, b: \R^{2n-2} \to \R$.  Then we take the sum
$$
s^{\text{\rm sm}}: = \sum_\alpha \chi_\alpha s_{\sigma;\alpha}
$$
for a partitions of unity $\{\chi_\alpha\}$ subordinate to $\{[U_\alpha]\}$
which defines a global \emph{smooth} section of $\CL$.

It follows from the general property of the mollifier smoothing 
that  $h^\delta_\alpha \to e^{f_\alpha}$  as $\delta \to 0$ in compact open topology or in $C^0$
topology. This is easy to check (or see \cite{gelfand-shilov} for example).
Therefore $h_\alpha^\delta$ is nowhere vanishing 
for a sufficiently small $\delta = \delta_\alpha > 0$, and so
 we can take the logarithm $g_\alpha = \log h_\alpha^{\delta_\alpha}$ 
so that $h_\alpha^\delta = e^{g_\alpha}$ unambiguously.

Reading back
 the above \emph{explicit} correspondence between a section of $H\to \cN_H$ and a nowhere-vanishing
section of the line bundle $\cL$, we conclude that
the collection $\{g_\alpha \circ \psi_\alpha\}$ associated to $\{[U_\alpha]\}$
represents a smooth section of the projection $\pi: H \to \cN_H$. 
We denote by $\sigma^{\text{\rm sm}}$ the corresponding smooth section.

Now we consider the flow map of the vector field $W$ 
$$
\Phi_H^{\sigma^{\text{\rm sm}}}: \cN_H \times \R \to H
$$
given by $\Phi_H^{\sigma^{\text{\rm sm}}}(\ell,t) = \phi_{W}^t(\sigma^{\text{\rm sm}}(\ell))$,
and define the map
$\Psi: H \to \cN_H \times \R$ to be its inverse
\be\label{eq:smooth-Psi}
\Psi(x) = (\pi_H(x), T_{\sigma^{\text{\rm sm}}}(x)).
\ee
By construction, $\Psi$ now
satisfies all the properties required in Proposition \ref{prop:leaf-space-structure}.
This finally completes the proof of Proposition \ref{prop:leaf-space-structure}.
\end{proof}

This will finish the proof of the diagram \eqref{eq:diagram-intro}
required in the proof of Theorem \ref{thm:equivalence-intro}.

\subsection{Symplectic structure on the leaf space}
\label{subsec:symplectic-structure}

Now  we turn to the construction of symplectic structure.
Using Proposition \ref{prop:leaf-space-structure}, we fix a smooth
section $\sigma^{\text{\rm sm}} : \cN_H \to H$ and write $F: = \Image \sigma^{\text{\rm sm}}$.

When we choose the above used coherent atlas, we can choose them
so that the associated cocycle $\gamma_{\alpha\beta}$
becomes symplectic by requiring the chart $(U_\alpha, \varphi_\alpha)$ also to satisfy
the defining equation
\be\label{eq:reduced-form}
(y^\alpha)^*\omega_0 = \iota_{H}^*\omega, \quad \omega = d\lambda
\ee
of the general coisotropic reduction (see \cite[Theorem 5.3.23]{abraham-marsden} for example)
where $\iota_{H}: H \to M$ is the inclusion map and
$\omega_0$ is the standard symplectic from on $\RR^{2n-2}$. (See also \cite{gotay}, \cite{oh-park}.)
By using such a foliated chart satisfying \eqref{eq:reduced-form},
the associated holonomy cycles define symplectic atlas and so a symplectic structure on $\cN_{H}$,
\emph{when the holonomy is trivial as in our case where we assume the presence of smooth section.}
This will then finish construction of reduced symplectic structures on $\cN_H$.
(We refer to \cite[Section 5]{oh-park} for a detailed discussion on the construction of
transverse symplectic structure for the null foliation of general coisotropic submanifolds.)

An immediate corollary of the above construction of diffeomorphism
$\Psi: H \to \cN_H \times \R$ is that  any Liouville $\sigma$-sector
is a Liouville sector in the sense of \cite{gps}.

\begin{remark}
On the other hand, the converse is almost a tautological statement
in that \cite[Lemma 2.5]{gps} shows that any of their three defining conditions
given in \cite[Definition 2.4]{gps} is equivalent to
the condition
\begin{itemize}
\item There exists
a diffeomorphism $\Psi: H \to F \times \R$ making \eqref{eq:Psi-diagram} commute
\end{itemize}
Once this is in our disposal, $\Psi$ induces a diffeomorphism
$
[\Psi]: \cN_H \to F.
$
Therefore we can choose a continuous section
$
\sigma_{\text{\rm ref}}: \cN_H \to H
$
required for the definition of
$\sigma$-sectorial hypersurface to be
$$
\sigma_{\text{\rm ref}}(\ell): = [\Psi]^{-1}(\ell), \quad \ell \in \cN_H.
$$
\end{remark}

Now we wrap up the proof of Theorem~\ref{thm:equivalence-intro} as the special case
$H = \del M$ of the following theorem. We will postpone the proof of Statement (3) till
the next subsection.

\begin{theorem} \label{thm:equivalence-H} 
Under the above definition of $\sigma$-sectorial hypersurface $H \subset M$,  the following holds:
\begin{enumerate}[(1)]
\item\label{item.equivalence cL manifold-intro} $\cN_{H}$ carries
the structure of Hausdorff smooth manifold  such that $\pi: H \to \cN_{H}$
is a smooth submersion.
\item There exists a smooth section $\sigma^{\text{\rm sm}}$ of $\pi: H \to \cN_H$
which can be $C^0$-approximated to the given continuous section $\sigma$ as close as we want.
\item\label{item. equivalence symplectic structure-intro} 
$\cN_{H}$ carries a canonical symplectic structure denoted by $\omega_{\cN_H}$ as a coisotropic reduction of $H \subset \Mliou$: We set $F: = \text{\rm Image }\sigma^{\text{\rm sm}}$. Then there is a diffeomorphism
$\Psi: H \to F \times \RR$ and a commutative diagram
\be \label{eq:diagram-intro}
\xymatrix{H \ar[d]^\pi \ar[r]^{\Psi} & F \times \RR \ar [d]^{\pi_F}\\
\cN_{H} \ar[r]^{\psi} & F
}
\ee
such that $\pi$ is a smooth map which admits a smooth section $\sigma: \cN_{H} \to H$
for which $\sigma$ satisfies $\sigma^*\omega_\del = \omega_{\cN_{\del M}}$, and $\pi_F$ is the canonical
projection.
\item $(\cN_H,\omega_{\cN_H})$ carries a canonical Liouville one-form $\lambda_{\cN_H}$:
The map $\psi$ is a Liouville diffeomorphism between
$(\cN_H, \lambda_{\cN_H})$ and the $(F, \lambda|_F)$ with the Liouville form
$ \lambda|_F$ on $F$, which is given by $\psi(\ell) = \sigma(\ell)$
for $\ell \in \cN_H$.
\end{enumerate}
\end{theorem}

\subsection{Induced Liouville structure on the leaf space}
\label{subsec:liouville-structure-leaf-space}

Finally we prove Statement (4) of Theorem \ref{thm:equivalence-H} by extracting
some consequences on the above constructed symplectic structure on $\cN_H$
derived from the given property of the characteristic foliation $\cD$ near infinity.
Recall the definitions  $F = \Image \sigma^{\text{\rm sm}}$ and
the smooth flow map $\Phi_H := (\Psi_H^{\text{\rm sm}})^{-1}$
\be\label{eq:PhiH}
\Phi_H:  F  \times [0,\infty) \to H
\ee
where $\Psi_H^{\text{\rm sm}}$ is given in \eqref{eq:smooth-Psi}.
By the convexity hypothesis on $H_\infty$,
we have a contact vector field $\eta$ on $\del_\infty M$ that is transverse to $H_\infty$.

\begin{lemma}\label{lem:cNH-exact} The symplectic manifold $(\cN_H,\omega_{\cN_H})$ is exact.
\end{lemma}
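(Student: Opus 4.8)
The plan is to reduce exactness of $\omega_{\cN_H}$ to the convexity of $H_\infty^+$ already in hand, rather than to argue on $\cN_H$ directly. Since the composite $\sigma_\infty^+:\cN_H\to\del_\infty M$ built just above is a symplectic diffeomorphism onto the convex hypersurface $(H_\infty^+,\omega^+)$, it suffices to exhibit a primitive of $\omega^+$ on $H_\infty^+$ and pull it back by $\sigma_\infty^+$. Thus the task becomes one purely of convex contact geometry: show that the reduced symplectic form of a convex hypersurface $H_\infty^+\subset(\del_\infty M,\xi)$ is exact.

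First I would fix a contact form $\alpha$ for $\xi$ on a neighborhood $\nbhd(H_\infty^+)$, normalized through the transverse contact vector field $\nu$ of Choice~\ref{choice:eta} (concretely, the form induced by $\lambda$ at infinity via $\lambda=e^s\pi^*\alpha$ on the symplectization end). Using the flow of $\nu$ I would trivialize $\nbhd(H_\infty^+)\cong H_\infty^+\times(-\epsilon,\epsilon)$ so that $\nu=\partial_u$. Because $\nu$ is a contact vector field, $L_\nu\alpha=\mu\,\alpha$ for some function $\mu$; writing $\alpha=a\,du+\beta_u$ forces $\beta_u=e^{\int_0^u\mu}\beta_0$ with $\beta_0:=\iota_{H_\infty^+}^*\alpha$ and $a=\alpha(\nu)$. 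The contact condition $\alpha\wedge(d\alpha)^{n-1}\neq0$ then yields $(d\beta_0)^{n-1}\neq0$ precisely where $a=\alpha(\nu)\neq0$, i.e.\ on the positive part $H_\infty^+$; there $\iota_{H_\infty^+}^*d\alpha=d\beta_0$ is symplectic and coincides with $\omega^+$. Hence $\omega^+=d\beta_0$ is exact with primitive $\beta_0$, and $\lambda_{\cN_H}:=(\sigma_\infty^+)^*\beta_0$ is a primitive of $\omega_{\cN_H}$.

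The main obstacle is the step identifying the reduced form $\omega^+$ — manufactured by coisotropic reduction of $H\subset M$ and transported to infinity along the Liouville and characteristic flows — with the restriction $\iota_{H_\infty^+}^*d\alpha$. Since there is no canonical contact form on $\del_\infty M$, this identification hinges on choosing $\alpha$ with the correct normalization relative to $\lambda$ and to $\nu$; only with that normalization does $\iota_{H_\infty^+}^*d\alpha$ equal $\omega^+$ rather than a conformally rescaled cousin. Finally, I note that for \emph{bare} exactness one may bypass the contact geometry altogether: the smooth section $\sigma_N^+$ satisfies $\pi\circ\sigma_N^+=\mathrm{id}$, so $\omega_{\cN_H}=(\sigma_N^+)^*\pi^*\omega_{\cN_H}=(\sigma_N^+)^*d\lambda_H=d\big((\sigma_N^+)^*\lambda_H\big)$ exhibits $(\sigma_N^+)^*\lambda_H$ as a primitive at once. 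This shortcut settles the lemma immediately, but the resulting primitive is not adapted to the behaviour at infinity, whereas $\beta_0$ is the one I expect to extend to the canonical Liouville one-form of Theorem~\ref{thm:equivalence}(3).
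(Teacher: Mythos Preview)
Your ``shortcut'' at the end is precisely the paper's proof: the paper uses the section $\sigma_N^+$ and the defining relation $\pi_H^*\omega_{\cN_H}=\iota_H^*d\lambda$ to compute
\[
\omega_{\cN_H}=(\sigma_N^+)^*\pi_H^*\omega_{\cN_H}=(\sigma_N^+)^*\iota_H^*d\lambda=d\bigl((\sigma_N^+)^*\lambda_N^+\bigr),
\]
exactly as you do. So the lemma itself is settled by what you regard as a side remark.

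Your main route via convex contact geometry at $H_\infty^+$ is more elaborate, and the obstacle you flag is real: since $\del_\infty M$ carries only a contact \emph{distribution} and no canonical contact form, the primitive $\beta_0=\iota_{H_\infty^+}^*\alpha$ is only well defined up to a positive conformal factor, and identifying $d\beta_0$ with the transported reduced form $\omega^+$ requires exactly the normalization you have not pinned down. The paper avoids this issue entirely. Your concern that the shortcut primitive $(\sigma_N^+)^*\lambda_H$ is ``not adapted to the behaviour at infinity'' is addressed by the paper in the very next lemma (Lemma~\ref{lem:independent-of-N}), which shows this primitive is independent of $N$ and hence canonical; this is how the paper obtains the Liouville one-form of Theorem~\ref{thm:equivalence}(3), rather than by going through $\beta_0$.
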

\begin{proof} Note that $F = \Image \sigma^{\text{\rm sm}}$
is a symplectic submanifold of $M$ and
the symplectic form $d\lambda$ induces an exact symplectic form
$d(\iota_F^*\lambda) = \iota_F^*d\lambda $
for the inclusion map
$$
\iota_F: F  \hookrightarrow H \hookrightarrow (M,\lambda).
$$
Therefore it follows from \eqref{eq:reduced-form} and $\pi_F^*\circ \sigma^{\text{\rm sm}}  = id_{\cN_H}$
\beastar
\omega_{\cN_H} & = & (\pi_{F }\circ \sigma^{\text{\rm sm}})^*\omega_{\cN_H} = (\sigma^{\text{\rm sm}} )^*(\pi_{F }^*\omega_{\cN_H})\\
& = & (\sigma^{\text{\rm sm}} )^*(\iota_{F }^*d\lambda) = (\sigma^{\text{\rm sm}} )^*d(\iota_F^*\lambda)
= d((\sigma^{\text{\rm sm}} )^*\iota_F^*\lambda) \\
& = & d((\iota_F \circ \sigma^{\text{\rm sm}} )^*\lambda)\eeastar
which proves exactness of $\omega_{\cN_H}$: Here the third equality follows from
the defining equation \eqref{eq:reduced-form} and the equalities
$$
\pi_{F } = \pi_H \circ \Phi_H , \quad \iota_{F } = \iota_H \circ \Phi_H
$$
with the map $\Phi_H $ given in \eqref{eq:PhiH}.
\end{proof}

This leads us to the following reduced Liouville structure on $\cN_H$.
\begin{defn}[Reduced Liouville structure] We call the primitive $\lambda_{\cN_H}$ of $\omega_{\cN_{\del H}}$ defined as
above the canonical Liouville structure on $(\cN_H,\omega_{\cN_H})$.
\end{defn}

\section{Geometry of transverse coisotropic collections}
\label{sec:sectors-with-corners}

Recall that~\cite{gps2} requires the following properties
 on the boundary strata when studying Liouville sectors with corners:

\newenvironment{sectorial-collection}{
	  \renewcommand*{\theenumi}{(S\arabic{enumi})}
	  \renewcommand*{\labelenumi}{(S\arabic{enumi})}
	  \enumerate
	}{
	  \endenumerate
}
\begin{defn}[Definition 9.2 \& Lemma 9.4 \& Definition 9.14 \cite{gps2}]\label{defn:sectorial-collection}
A \emph{sectorial collection} is a collection of $m$ hypersurfaces $H_1, \ldots, H_m \subset M$, cylindrical near infinity, such that:
\begin{sectorial-collection}
\item\label{item.  transverse intersection sectorial} The $H_i$ transversely intersect,
\item\label{item. coisotropic sectorial} All pairwise intersections $H_i \cap H_j$ are coisotropic, and
\item\label{item. I} There exist functions $I_i: \nbhd(\del M) \to \RR$, linear near infinity, satisfying the following on the characteristic foliations $\cD_i$ of $H_i$:
\be\label{eq:function-Ii}
dI_i|_{\cD_i} \neq 0, \, dI_i|_{\cD_j} = 0 \quad \text{\rm for } i \neq j, \quad \{I_i, I_j\} = 0.
\ee
\end{sectorial-collection}
A Liouville sector $(M,\lambda)$ with corners is a Liouville manifold-with-corners whose codimension one boundary strata form a sectorial collection.
\end{defn}

%This definition is the same as \cite[Definition 9.14]{gps2} except
%we spell out the definition of \emph{sectoriality} of the collection $\{H_1,\cdots, H_n\}$
%from \cite[Definition 9.2]{gps2}.

We will introduce another definition of sectorial collection
by replacing Condition \ref{item. I} in the spirit of Definition \ref{defn:sectorial hypersurface}.

For this purpose, we need some preparations. We start with introducing the following definition

\begin{defn}[Transverse coisotropic collection]\label{defn:transverse coisotropic collection}
Let $(M,\lambda)$ be a Liouville manifold with boundary and corners.
Let $H_1, \ldots, H_m \subset M$ be a collection of hypersurfaces cylindrical near infinity,
that satisfies Conditions \ref{item.  transverse intersection sectorial},
\ref{item. coisotropic sectorial} of Definition \ref{defn:sectorial-collection}.
\end{defn}

In the remaining section, we first study the underlying geometry and
prove a general structure theorem of such a collection.
In the next section, based on the theorem, we will provide an intrinsic characterization
of the sectorial collection and Liouville sectors with corners above
purely in terms of geometry of coisotropic submanifolds. We call
the resulting structure the structure of \emph{Liouville $\sigma$-sectors with corners}.

\subsection{Gotay's coisotropic embedding theorem of presymplectic manifolds}
\label{subsec:neighborhoods}

For a finer study of the neighborhood structure of the sectorial corner $C$,
we first recall below some basic properties of the coisotropic
submanifolds and the coisotropic embedding theorem of Gotay \cite{gotay}.
See also \cite{weinstein-cbms}, \cite{oh-park} for relevant material on the geometry of
coisotropic submanifolds. We will mostly adopt the notations used in \cite{gotay},
\cite[Section 3]{oh-park}.

Let $(Y,\omega_Y)$ be any presymplectic manifold.
The null distribution on $Y$ is the vector bundle
$$
E: = (TY)^{\omega_Y} \subset TY, \, \quad E_y = \ker \omega_Y|_y.
$$
This distribution is integrable since $\omega_Y$ is closed.
We call the corresponding foliation the {\it null foliation} on
$Y$ and denote it by
$$
\cF = \cF_Y.
$$
(Then $E$ is nothing but the total space of
the foliation tangent bundle $T\cF$.)
We now consider the dual bundle $\pi: E^* \to Y$ which is the foliation cotangent bundle
$$
E^* = T^*\cF.
$$
The tangent bundle $TE^*$ of the total space $E^*$ has its restriction to
the zero section $Y \hookrightarrow E^*$; this restriction carries a canonical decomposition
$$
TE^*|_Y \cong TY \oplus E^*.
$$

\begin{example}
A typical example of a presymplectic manifold is given by
$$
(Y,\omega_Y) = (H, \omega_H), \quad \omega_H: = \iota_H^*\omega
$$
arising from any coisotropic submanifold $H \subset^{\iota_H} (X,\omega)$. Then $E = \cD_H$,
the null distribution of $(H,\omega_H)$.
It is easy to check that the isomorphism
$$
TX \to T^*X
$$
maps $TY^\omega$ to the conormal $N^*Y \subset T^*X$, and induces
an isomorphism between $NY = (TX)|_Y/TY$ and $E^*$.
\end{example}

Gotay \cite{gotay} takes a transverse symplectic subbundle $G$ of $TY$ and associates to each  splitting
\be\label{eq:splitting}
\Gamma: \quad TY = G \oplus E, \quad E = T \cF
\ee
the zero section map
$$
\Phi_\Gamma: Y \hookrightarrow T^*\cF = E^*
$$
as a coisotropic embedding with respect to a `canonical' two-form $\omega_{E^*}$ on $E^*$ which restricts to a
symplectic form on a neighborhood of the zero section of $E^*$ such that
$$
\omega_Y = \Phi_\Gamma^*\omega_{E^*}.
$$
\begin{remark}
When $\omega_Y = 0$, Gotay's embedding theorem reduces to the well-known Weinstein's
neighborhood theorem of Lagrangian submanifolds $L$ in which case $E^* = T^*L$ with $Y = L$.
\end{remark}

We now describe the last symplectic form closely following \cite{gotay}.

We denote the aforementioned neighborhood by
$$
V \subset T^*\cF = E^*.
$$
Using the splitting $\Gamma$, which may be regarded as an `Ehresmann connection' of the `fibration'
$$
T \cF \to Y \to \cN_Y,
$$
we can explicitly write down a symplectic form $\omega_{E^*}$ as follows.

First note that as a vector bundle, we have a natural splitting
$$
TE^*|_Y \cong TY \oplus E^* \cong G \oplus E \oplus T^*\cF
$$
on $Y$, which can be extended to a neighborhood $V$ of the zero section $Y \subset E^*$ via the
`connection of the fibration' $T^*\cF \to Y$. (We refer readers to \cite{oh-park} for
a complete discussion on this.)

We denote
$$
p_\Gamma : TY \to T\cF
$$
the (fiberwise) projection to $E=T\cF$ over $Y$ with respect to the splitting
\eqref{eq:splitting}. We have the bundle map
$$
TE^* \stackrel{T\pi}\longrightarrow TY \stackrel{p_\Gamma}
\longrightarrow E
$$
over $Y$.
\begin{defn}[Canonical one-form $\theta_\Gamma$ on $E^*$]
Let $\zeta \in E^*$ and $\xi \in T_\zeta E^*$.
We define the one form $\theta_\Gamma$ on $E^*$ whose value is to be the linear functional
$$
\theta_{\Gamma}|_\zeta \in T_\zeta^*E^*
$$
at $\zeta$ that is determined by its value
\be\label{eq:thetag}
\theta_\Gamma|_{\zeta}(\xi): = \zeta(p_\Gamma \circ T\pi(\xi))
\ee
against $\xi \in T_\zeta(T^*\cF)$.
\end{defn}
(We remark that this is reduced to the canonical Liouville one-form $\theta$ on the cotangent bundle $T^*L$ for the case
of Lagrangian submanifold $L$ in which case $\omega_Y = 0$ and the splitting is trivial and not needed.)

Then we define the closed (indeed exact) two form on $E^* = T^*\cF$ by
$$
- d\theta_\Gamma.
$$
Together with the pull-back form $\pi^*\omega_Y$, we consider the closed two-form $\omega_{E^*,\Gamma}$ defined by
\be\label{eq:omega*}
\omega_{E^*,\Gamma}:= \pi^*\omega_Y - d\theta_\Gamma
\ee
on $E^* = T^*\cF$.
It is easy to see that $\omega_{E^*,\Gamma}$  is non-degenerate in a neighborhood $V \subset E^* $ of the zero
section (See the coordinate expression \cite[Equation (6.6)]{oh-park}
of $d\theta_\Gamma$ and $\omega_V$.)

\begin{defn}[Gotay's symplectic form \cite{gotay}]\label{defn:gotay's-two-form}
We denote the restriction of
$\omega_{E^*,\Gamma}$ to $V$ by $\omega_V$, i.e.,
$$
\omega_V: = (\pi^*\omega_Y - d\theta_\Gamma)|_V.
$$
We call this two-form \emph{Gotay's symplectic form} on $V \subset E^*$.
\end{defn}

The following theorem ends the description of
Gotay's normal form for the neighborhood of a coisotropic submanifold $C \subset (M,\omega)$
of any symplectic manifold $(M,\omega)$ as a neighborhood $V$ of the zero section of
$T^*\cF_C$ of its null foliation $\cF_C$ on $C$ equipped with the symplectic form.

\begin{theorem}[See \cite{gotay,oh-park}]\label{thm:normal-form} Let $Y \subset (X,\omega_X)$ be any coisotropic submanifold.
Fix a splitting $\Gamma$ in \eqref{eq:splitting}. Then
there is a neighborhood $\nbhd(Y):= U \subset X$ and a diffeomorphism
$$
\Phi_\Gamma : U \to V \subset E^*
$$
such that the following hold:
\begin{enumerate}
\item $\omega_X = \Phi_\Gamma^*\omega_{E^*,\Gamma}$ on $U \subset X$.
\item For two different choices, $\Gamma$ and $\Gamma'$, of splitting of $TY$,
the associated two forms $\omega_{E^*,\Gamma}$ and $\omega_{E^*,\Gamma'}$ are
diffeomorphic relative to the zero section $Y \subset E^*$,
on a possibly smaller neighborhood $V' \subset E^*$ of $Y$.
\end{enumerate}
\end{theorem}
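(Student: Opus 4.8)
The plan is to establish part (1) by a relative Moser (Darboux--Weinstein) argument and then to deduce part (2) formally from it. First I would construct an initial tubular-neighborhood diffeomorphism $\phi_0: \nbhd(Y) \to V_0 \subset E^*$ with $\phi_0|_Y = \mathrm{id}$ (identifying $Y$ with the zero section) whose derivative along $Y$ intertwines the relevant splittings. The key identification is the one already recorded in the Example above: the bundle map $TX \to T^*X$ induced by $\omega_X$ carries $E = (TY)^{\omega_X}$ onto the conormal $N^*Y$, hence induces $NY \cong E^*$. Combining this with the chosen splitting $\Gamma: TY = G \oplus E$ yields along $Y$ a canonical isomorphism $T_yX \cong G_y \oplus E_y \oplus E_y^*$, and I would take $\phi_0$ to be any diffeomorphism (built from a compatible tubular or exponential map) realizing this isomorphism on $TX|_Y$.

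Second I would verify the pointwise assertion that $\phi_0^*\omega_{E^*,\Gamma}$ and $\omega_X$ coincide as bilinear forms at every point of $Y$. This is a finite-dimensional computation: along the zero section $\theta_\Gamma$ vanishes, so $\omega_{E^*,\Gamma}$ restricts on $G \oplus E \oplus E^*$ to $\omega_Y$ on $G \oplus E$ (degenerate exactly along $E$) together with the canonical duality pairing of $E$ with $E^*$ coming from $-d\theta_\Gamma$; on the $X$-side, coisotropy of $Y$ says precisely that $\omega_X|_{TY} = \omega_Y$ with kernel $E$ and that $\omega_X$ pairs $E$ nondegenerately with $NY \cong E^*$. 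With the conventions of $\theta_\Gamma$ and of the identification $NY \cong E^*$ fixed, these two bilinear forms agree along $Y$.

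Third is the Moser step. Writing $\omega_0 := (\phi_0)_*\omega_X$ and $\omega_1 := \omega_{E^*,\Gamma}$ on $V_0$, both are closed, both are nondegenerate near $Y$, and by the previous step they agree along $Y$. Using the fiberwise contraction of $E^*$ onto its zero section I would invoke the relative Poincar\'e lemma to write $\omega_1 - \omega_0 = d\alpha$ with $\alpha$ vanishing along $Y$. The linear path $\omega_t = \omega_0 + t(\omega_1 - \omega_0)$ consists of closed forms that remain nondegenerate on a possibly smaller neighborhood of $Y$, since they all equal the nondegenerate $\omega_0$ along $Y$; solving $\iota_{X_t}\omega_t = -\alpha$ then produces a time-dependent vector field $X_t$ which vanishes along $Y$ because $\alpha$ does. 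Its flow $\psi_t$ therefore exists for $t \in [0,1]$ on a neighborhood of $Y$, fixes $Y$, and satisfies $\psi_1^*\omega_1 = \omega_0$. Setting $\Phi_\Gamma := \psi_1 \circ \phi_0$ gives $\Phi_\Gamma^*\omega_{E^*,\Gamma} = \omega_X$, which proves (1).

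Part (2) is then immediate: applying (1) to $\Gamma$ and to $\Gamma'$ gives $\Phi_\Gamma^*\omega_{E^*,\Gamma} = \omega_X = \Phi_{\Gamma'}^*\omega_{E^*,\Gamma'}$, so $\Phi_{\Gamma'} \circ \Phi_\Gamma^{-1}$ is a diffeomorphism between neighborhoods of the zero section, restricting to the identity on $Y$, with $(\Phi_{\Gamma'} \circ \Phi_\Gamma^{-1})^*\omega_{E^*,\Gamma'} = \omega_{E^*,\Gamma}$. I expect the main obstacle to be the second step: pinning down, with all the sign and orientation conventions of $\theta_\Gamma$ and of $NY \cong E^*$, that the two bilinear forms genuinely agree along $Y$; once this normalization is correct, the Moser machinery is routine. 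For the corner case relevant to the present paper, the same argument applies provided the tubular neighborhood, the retraction used in the Poincar\'e lemma, and the Moser flow are all chosen to preserve the stratification, so that $X_t$ is tangent to each boundary stratum.
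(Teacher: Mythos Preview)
Your argument is correct and is essentially the standard Moser--Darboux--Weinstein proof of Gotay's coisotropic embedding theorem. However, note that the paper does not supply its own proof of this statement: it simply records the result and cites \cite{gotay} for Statement~(1) and \cite[Theorem~10.1]{oh-park} for Statement~(2). Your sketch is in fact a reconstruction of what those references do, so there is nothing to compare beyond saying that you have filled in what the paper deliberately outsourced.
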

\begin{proof} The first statement is proved in \cite{gotay}. Statement (2) is then proved
in \cite[Theorem 10.1]{oh-park}.
\end{proof}

We have the natural projection map
\be\label{eq:projection-to-Y}
\widetilde \pi_Y: \nbhd(Y) \to Y
\ee
defined by
\be\label{eq:piY}
\widetilde \pi_Y := \pi_{E^*}\circ \Phi_\Gamma \circ \iota_Y,
\ee
for the inclusion map $\iota_Y: Y \hookrightarrow \nbhd(Y)=:U \subset X$,
which is induced by restricting the canonical projection $E^* \to Y$ to
the neighborhood $V \subset E^*$ of the zero section $Y$.
In particular, we have
$$
\ker d_x\pi_Y = E_x = \cD_Y|_x.
$$
\subsection{Structure of the null foliations of $\sigma$-sectorial corners}

We apply the discussion in the previous subsection to general transverse coisotropic collection
$$
\{H_1, \cdots, H_m\}.
$$
For any given subset $I \subset \{1, \cdots, m\}$, we denote
$$
H_I = \bigcap_{i \in I} H_i
$$
and $\pi_{H_I}: H_I \to \cN_{H_I}$ be the canonical projection. We also denote
the full intersection by
$$
C= \bigcap_{i=1}^m H_i.
$$
Furthermore, by the  transverse intersection property of
the coisotropic collection, we can choose the collection $\{\sigma_{C,1}, \ldots, \sigma_{C,m}\}$
to have the complete intersection property in that their images form a collection of  transverse intersection.
More precisely, we fix the following choice of smooth sections for a finer study of
the neighborhood structure of further constructions we will perform
\begin{choice}[Choice of sections $\sigma_i: \cN_{H_i} \to H_i$]\label{choice:sigma}
For each $i = 1, \ldots, m$, we choose a smooth section
$$
\sigma_i: \cN_{H_i} \to H_i
$$
for each $i = 1, \ldots, m$. Denote the set of sections $\sigma_i: \cN_{H_i} \to H_i$ by
\be\label{eq:sigma-collection}
\sigma = \{\sigma_1, \ldots, \sigma_m\}.
\ee
\end{choice}

Recall from Section \ref{sec:intrinsic} that for each $i$ a choice of smooth section
$$
\sigma_i: \cN_{H_i} \to H_i
$$
provides the trivialization map
$$
\Psi_i^{\sigma_i}: H_i \to \cN_{H_i} \times \RR, \quad \Psi_i^{\sigma_i}(x) = (\pi_{H_i}(x), t_i^{\sigma_i}(x))
$$
given in \eqref{eq:Psi-diagram}. We choose each $\sigma_i$  to be $\sigma_i = \sigma_{H_i}$
as defined in \eqref{eq:smooth-sigma}
For the given choice of $\sigma = \{\sigma_1, \ldots, \sigma_m\}$, we
collectively write
\be\label{eq:Psi-i-sigma}
\Psi_i^\sigma: = \Psi_i^{\sigma_i}, \quad i = 1, \ldots, m.
\ee

The following theorem is the generalization of Theorem \ref{thm:equivalence-intro}
whose proof also extends the one used in Section \ref{sec:intrinsic} to the case with corners.
The main task for this extension is to establish compatibility of the null foliations
of various coisotropic intersections arising
from taking a sub-collection $I \subset \{1, \ldots, m\}$: This compatibility condition and construction of
relevant strata is in the same
spirit as the combinatorial construction of a toric variety out of its associated fan.
(See \cite{fulton:toric} for example.)

\begin{theorem}\label{thm:Z-projectable} Let $(M,\lambda)$ be a Liouville $\sigma$-sector with corners,
and let $Z$ be the Liouville vector field of $(M,\lambda)$. Let
$$
\sigma = \{\sigma_1, \cdots, \sigma_m\}
$$
be a collection of smooth sections $\sigma_i: \cN_{H_i} \to H_i$ for $i=1, \ldots, m$.
Then the leaf space $\cN_{C}$ carries a canonical structure $\lambda_{\cN_{C}}$ of a Liouville manifold
with boundary and corners.
\end{theorem}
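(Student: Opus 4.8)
The plan is to reduce the corner case to the single-hypersurface analysis of Section~\ref{sec:intrinsic}, carried out simultaneously in the $m$ transverse directions, and to package the result by coisotropic reduction of $C$ together with Gotay's normal form (Theorem~\ref{thm:normal-form}). First I would pin down the null foliation of $C$: since the $H_i$ cleanly intersect and all pairwise intersections are coisotropic, $C=\bigcap_{i} H_i$ is coisotropic of codimension $m$, so its characteristic distribution $\cD_C=\ker \iota_C^*\omega$ has rank $m$. The key linear-algebraic point, which I would prove from the clean-intersection hypothesis and the coisotropy of the pairwise intersections, is that the characteristic direction of each $H_i$ is tangent to $C$ along $C$ and that $\cD_C=\bigoplus_{i=1}^m \cD_{H_i}|_C$. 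This splitting is exactly what makes the various null foliations compatible and is the intrinsic counterpart of the relations $\{I_i,I_j\}=0$ in Definition~\ref{defn:sectorial-collection}.

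Next, for each $i$ I would restrict the leaf-generating vector field $Z'_i$ of $\cD_{H_i}$ (Definition~\ref{defn:Z'}) and the trivialization $\Psi_i^{\sigma_i}$ to $C$. The decisive step is to prove that, after the normalization determined by the sections $\sigma$, the flows of $Z'_1,\ldots,Z'_m$ pairwise commute along $C$, so that together they generate a free, complete $\RR^m$-action whose orbits are precisely the null leaves of $C$; this commutativity is the simultaneous linearization of the characteristic flows alluded to before the theorem. Granting it, the reaching-time functions $t_i^{\sigma_i}$ assemble into a smooth trivialization $C\cong \cN_C\times\RR^m$, and running the arguments of Proposition~\ref{prop. leaf space is topological manifold}, Proposition~\ref{prop:leaf-space-structure} and Lemma~\ref{lemma. pi is submersion} in each factor shows that $\cN_C=C/\RR^m$ is Hausdorff, second countable, and carries a smooth manifold-with-corners structure for which $\pi_C$ is a submersion admitting the smooth section determined by $\sigma$.

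With the smooth structure available I would build the symplectic form by coisotropic reduction, exactly as in Theorem~\ref{thm:equivalence}\eqref{item. equivalence symplectic structure}: choosing the foliated charts compatibly with Gotay's splitting so that the holonomy cocycle is symplectic, the transverse form $\iota_C^*\omega$ descends via $\pi_C^*\omega_{\cN_C}=\iota_C^*\omega$, and triviality of the holonomy (guaranteed by the section) makes $\omega_{\cN_C}$ globally well defined. The Liouville primitive is then obtained as in Section~\ref{subsec:liouville-structure-leaf-space}: using $\sigma$ and the convexity of each $H_{i,\infty}$ at infinity I would pull $\lambda$ back along the level-set section $\sigma_N^+$ of the combined radial coordinate and verify, by the corner analogue of Lemma~\ref{lem:independent-of-N}, that the resulting one-form is independent of the auxiliary levels, hence canonical; this yields $\lambda_{\cN_C}$ with $d\lambda_{\cN_C}=\omega_{\cN_C}$.

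The main obstacle I anticipate is not the reduction itself but the compatibility of the null foliations across all sub-collections $H_I$, $I\subset\{1,\ldots,m\}$: one must check that the $\RR^m$-action restricts correctly on each intersection $H_I$ so that the leaf spaces $\cN_{H_I}$ fit together into the corner stratification of $\cN_C$, in the fan-like manner indicated after Choice~\ref{choice:sigma}. Concretely, the manifold-with-corners structure of $\cN_C$ is precisely this stratified data, and the crux is to show that the leafwise splitting $\cD_C=\bigoplus_i \cD_{H_i}|_C$ and the commuting of the generating flows persist on every stratum, so that a foliated chart adapted to $C$ simultaneously refines the charts adapted to each $H_I$. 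Establishing this coherence---together with the commutativity of the generators---is where the clean-coisotropic hypotheses and Gotay's normal form must be used most carefully.
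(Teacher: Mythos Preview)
Your overall architecture matches the paper's: split $\cD_C$ as $\sum_i \cD_{H_i}|_C$, produce an $\RR^m$-action whose orbits are the null leaves, trivialize $C\cong \cN_C\times\RR^m$, then run coisotropic reduction and the Section~\ref{subsec:liouville-structure-leaf-space} argument for the Liouville form, checking compatibility across sub-collections for the corner structure. Two points deserve comment.

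First, on your ``decisive step'' of commutativity: the paper does \emph{not} attempt to prove that the restricted leaf-generating vector fields $Z'_i|_C$ commute. Those fields are built in Definition~\ref{defn:Z'} using an auxiliary Riemannian metric on the middle piece of each $H_i$ separately, and there is no reason their restrictions to $C$ should commute. Instead the paper first assembles the map $\Psi_C^\sigma=(\pi_C,t_1^{C,\sigma},\ldots,t_m^{C,\sigma})$ from the \emph{restricted time functions} $t_i^{C,\sigma}=t_i^{\sigma_i}|_C$, checks that $d\Psi_C^\sigma$ is a pointwise isomorphism, and then \emph{defines} $Z_i:=(\Psi_C^\sigma)^*(\vec 0_{\cN_C}\oplus\partial/\partial s_i)$. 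Commutativity is then automatic because the $\partial/\partial s_i$ commute. Your proposed route---prove commutativity of restrictions directly---would require a separate argument you have not supplied; the paper's pullback trick is what makes this step effortless.

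Second, you assert the $\RR^m$-action is free and complete and that its orbits are the null leaves, but you do not say why the orbits cannot contain torus factors. The paper handles this via an integrable-systems argument: it first shows (Lemma~\ref{lem:discrete-isotropy}) that the action is proper and discontinuous by exploiting that the Liouville vector field $Z$ is tangent to $C$ at infinity and transverse to level sets of the radial function $s$, so the induced $\RR^m/\RR$-action on the compact set $\partial_\infty C$ is proper. Arnold's lemma then forces each orbit to be $\RR^{n_1}\times T^{n_2}$, and only the contractibility hypothesis in Definition~\ref{defn:intrinsic-corners} rules out the torus factors (Corollary~\ref{cor:contractible-fiber}). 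This is the one place where the defining contractibility of fibers is actually used, and your sketch does not invoke it.
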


We also define the function $t_i^{C,\sigma}: C \to \RR$ to be the restriction
\be\label{eq:tiC}
t_i^{C,\sigma}= t_i^{\sigma_i}|_C
\ee
where $t_i^{\sigma_i}$ is the function appearing in \eqref{eq:yi'H}.
The collection $\sigma = \{\sigma_i\}$ also induces a surjective map $\Psi_C: C \to \cN_C \times \RR^m$,
\be\label{eq:PsiC}
\Psi_C^\sigma(x): = \left(\pi_C(x), \left(t_1^{C,\sigma}(x),\ldots, t_m^{C,\sigma}(x)\right)\right)
\ee
which is also smooth with respect to the induced smooth structure on $\cN_C$.
(The functions $t_i^{C,\sigma}$ correspond to $t_i$ appearing in \cite[Section 49]{arnold:mechanics}
in the discussion following below.)

\begin{prop}\label{prop:PsiC} There is an $\RR^m$-action on $C$ that is free, proper and discontinuous
and such that
$C$ is foliated by the $\RR^m$-orbits. In particular the map
$$
\Psi_C^\sigma: C \to \cN_C \times \RR^m
$$
is an $\RR^m$-equivariant diffeomorphism with respect to the $\RR^m$-action on $C$ and
that of linear translations on $\RR^m$.
\end{prop}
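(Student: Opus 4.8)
The plan is to realize the $\RR^m$-action as the joint flow of $m$ commuting, complete, leaf-generating vector fields, one attached to each hypersurface $H_i$, and then to read off both freeness and the equivariance of $\Psi_C^\sigma$ from the way the time functions $t_i^{C,\sigma}$ are conjugate to these flows. First I would recall from Definition~\ref{defn:Z'} the leaf-generating vector field $Z_i'$ of the characteristic distribution $\cD_{H_i}$, built from the section $\sigma_i = \sigma_{H_i}^+$ and normalized so that $Z_i'[t_i^{\sigma_i}] = 1$ and $Z_i'$ is complete on $H_i$. A first, purely linear-algebraic observation makes the restriction to $C$ automatic: since $TC \subset TH_i$, taking $\omega$-orthogonal complements gives $\cD_{H_i}|_C = (TH_i)^\omega|_C \subset (TC)^\omega \subset TC$, the last inclusion because $C$ is coisotropic. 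Hence each $Z_i'|_C$ is already tangent to $C$. Moreover $(TC)^\omega = \big(\bigcap_i TH_i\big)^\omega = \sum_i (TH_i)^\omega = \sum_i \cD_{H_i}$, and since $C$ has codimension $m$ its null bundle $T\cF_C = (TC)^\omega$ has rank $m$, so this sum is direct and the restricted fields $Z_1'|_C, \ldots, Z_m'|_C$ span $T\cF_C$ pointwise.

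The main obstacle is the \emph{simultaneous linearization}: to show that the restricted fields pairwise commute, $[Z_i'|_C, Z_j'|_C] = 0$, so that their flows assemble into a single $\RR^m$-action. This is the infinitesimal shadow of the Poisson commutation $\{I_i,I_j\}=0$ appearing as Condition~\ref{item. I} of Definition~\ref{defn:sectorial-collection}; in the present intrinsic setting I would extract it from the coisotropic calculus of the clean coisotropic collection together with Gotay's normal form (Theorem~\ref{thm:normal-form}) applied to $C$, which identifies $\nbhd(C)$ with a neighborhood of the zero section of $T^*\cF_C$ and thereby linearizes the several characteristic flows at once. Concretely, I would produce joint linearizing time coordinates $(t_1^{C,\sigma}, \ldots, t_m^{C,\sigma})$ on the leaves, arranged compatibly by the choice of sections in Choice~\ref{choice:sigma}, satisfying the duality
\eqn
dt_i^{C,\sigma}(Z_j'|_C) = \delta_{ij}, \qquad i,j = 1, \ldots, m,
\eqnd
which exhibits each $Z_i'|_C$ as $\frac{\del}{\del t_i}$ in these coordinates and forces the brackets to vanish. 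Verifying that the off-diagonal relations $Z_i'[t_j^{C,\sigma}] = 0$ hold (not merely the diagonal normalization inherited from Section~\ref{sec:intrinsic}) is the delicate point, and is exactly where the clean-intersection and pairwise-coisotropic hypotheses, via the action-angle construction (cf.\ \cite[Section 49]{arnold:mechanics}), do their work.

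Granting the linearization, I would define the action
\eqn
\Phi: \RR^m \times C \to C, \qquad \Phi((s_1, \ldots, s_m), x) = \phi_{Z_1'}^{s_1} \circ \cdots \circ \phi_{Z_m'}^{s_m}(x).
\eqnd
Commutativity makes this independent of the ordering and a genuine group action, while completeness of each $Z_i'|_C$ makes it defined for all $s \in \RR^m$. Freeness follows from the absence of nonconstant periodic orbits of each $Z_i'$, ruled out exactly as in Lemma~\ref{lemma. M forward backward} using the contractible-fiber hypothesis of Definition~\ref{defn:intrinsic-corners-intro}; properness and discontinuity then follow because $(t_1^{C,\sigma}, \ldots, t_m^{C,\sigma})$ furnishes a continuous left inverse to each orbit map, forcing orbits to escape every compact set in each coordinate direction. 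Since the orbits are precisely the leaves of $\cF_C$, hence the fibers of $\pi_C$, and each such fiber is contractible and therefore diffeomorphic to $\RR^m$ (Corollary~\ref{cor:contractible-fiber}), the action is transitive on fibers.

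Finally, equivariance of $\Psi_C^\sigma$ of \eqref{eq:PsiC} is immediate from the duality relation: flowing by $\phi_{Z_j'}^{s_j}$ advances $t_j^{C,\sigma}$ by $s_j$ while fixing $\pi_C$ and the remaining $t_i^{C,\sigma}$, so $\Psi_C^\sigma(\Phi((s),x)) = \Psi_C^\sigma(x) + (0,s)$, matching linear translation on the $\RR^m$-factor. Bijectivity of $\Psi_C^\sigma$ follows since $\pi_C$ realizes the quotient by the free, fiberwise-transitive action while the $t_i^{C,\sigma}$ coordinatize each orbit; smoothness of $\Psi_C^\sigma$ and of its inverse $\Phi(\,\cdot\,,\sigma_C(\ell))$, where $\sigma_C(\ell)$ is the point of the leaf $\ell$ at which all $t_i^{C,\sigma}$ vanish, then follows from the smooth structure on $\cN_C$ provided by Theorem~\ref{thm:Z-projectable} and smoothness of the flows, exactly as in the single-hypersurface case of Proposition~\ref{prop:leaf-space-structure}.
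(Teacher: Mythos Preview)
Your approach diverges from the paper's at the key step of producing the commuting frame. You restrict the leaf-generating fields $Z_i'$ of Definition~\ref{defn:Z'} from each $H_i$ to $C$ and then try to prove the restrictions commute. The paper instead \emph{defines} its frame directly on $C$ by setting $Z_i := (\Psi_C^\sigma)^*\big(\vec 0_{\cN_C}\oplus \partial/\partial s_i\big)$ (equation~\eqref{eq:Zi}), so that both $[Z_i,Z_j]=0$ and the duality $dt_j^{C,\sigma}(Z_i)=\delta_{ij}$ hold by construction. These are genuinely different vector fields: your $Z_i'|_C$ lies in the line $\cD_{H_i}|_C\subset \cD_C$, while the paper's $Z_i$ is the section of $\cD_C$ dual to $dt_i^{C,\sigma}$ relative to the full coframe $\{dt_j^{C,\sigma}\}_{j=1}^m$.

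There is a genuine gap in your route. Recall how $Z_i'$ is built on $H_i$: it agrees with a normalized field $\widetilde Z$ near infinity and is then extended across the compact region $H_{\text{mid}}$ by the single requirement $g(Z_i',Z_i')=1$ for an auxiliary Riemannian metric. Nothing in this construction on $H_i$ refers to $H_j$ or to $t_j^{\sigma_j}$, so there is no mechanism forcing $Z_i'[t_j^{C,\sigma}]=0$ for $i\neq j$, nor $[Z_i'|_C,Z_j'|_C]=0$. Gotay's normal form and the action--angle machinery do guarantee the existence of \emph{some} commuting frame of $\cD_C$ adapted to the given coframe, but not that your particular restricted fields already form one; to salvage the argument you must replace $Z_i'|_C$ by the frame dual to the $dt_j^{C,\sigma}$, which is exactly the paper's move.

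The properness arguments also differ. You deduce properness directly from the time coordinates furnishing a left inverse to the orbit map. The paper (Lemma~\ref{lem:discrete-isotropy}) instead exploits the Liouville geometry: since $Z$ is tangent to every $H_i$ at infinity it is tangent to $C$ there, and since $Z[s]=1$ the $\RR^m$-action descends to an $\RR^m/\RR$-action on the compact ideal boundary $\partial_\infty C$, whence the isotropy is discrete. The paper then invokes Arnold's classification of orbits of a proper $\RR^m$-action by commuting complete fields as $\RR^{n_1}\times T^{n_2}$, with the contractible-fiber hypothesis forcing $n_2=0$ (Corollary~\ref{cor:contractible-fiber}). Your argument is more elementary once the full duality is in hand, but the paper's route separates the discreteness of isotropy from the identification of the orbit type.
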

\begin{proof}
%Let $Z_i'$ be the vector field defined on $H_i$ in the course of the proof of
%Proposition \ref{lem:piN homeo} for each $i =1, \ldots, n$. By the property
%$$
%\cD_{H_i}|_{C} \subset \cD_{C},
%$$
%$Z_i'$ are tangent to $C$ and so restricts to a vector field on $C$ which is
%smooth by the  transverse intersection property of the collection.
%We denote the resulting vector field by $Z_i$ on $C$.
%Then by definition of the null foliation $\cN_C$ of the coisotropic submanifold $C \subset M$,
%the vector fields satisfy
%$$
%\omega(Z_i,Z_j) = 0
%$$
%on $C$ for all $i, \, j$. Furthermore we have
%$$
%Z_i[t_i^{C,\sigma}] = 1
%$$
%by construction of $t_i^{C,\sigma}$.

Let $(s_1, \ldots, s_m)$ be the standard coordinates of $\RR^m$.
We set
\be\label{eq:Zi}
Z_i : = (\Psi_C^\sigma)^*\left(\vec 0_{\cN_C} \oplus \frac{\del}{\del s_i}\right).
\ee
Then $Z_i \in \cD_C$, and $[Z_i,Z_j] = 0$ since $[\frac{\del}{\del s_i},\frac{\del}{\del s_j}] = 0$.
On $C$, we also have
$$
t_j^{C,\sigma}(Z_i) = d(s_j \circ \Psi_C^\sigma)\left((\Psi_C^\sigma)^*\left(\vec 0_{\cN_C} \oplus \frac{\del}{\del s_i}\right)\right)
= ds_j(\frac{\del}{\del s_i}) = \delta_{ij}.
$$
In particular $Z_i$ is tangent to all level sets of $t_i^{C,\sigma}$ with $j \neq i$, and is transverse
to the level sets of $t_i^{C,\sigma}$ for each $i$.
%
%
%
%By the  transverse intersection property of the collection $\{H_1, \cdots, H_n\}$, we have
%a flag of submanifolds
%\be\label{eq:complete-flag}
%C= H_1 \cap \cdots \cap H_n \subset H_1 \cap \cdots \cap H_{n-1} \subset \cdots \subset H_1
%\ee
%and
%$$
%TC = T(H_1 \cap \cdots \cap H_n)|_C \subset T(H_1\cap  \cdots \cap H_{n-1})|_C \subset \cdots \subset TH_1|_C
%$$
%such that their dimensions are strictly increasing by one for every step.
%In particular, we have
%$$
%(TH_1)^\omega|_C \subset \cdots \subset T(H_1 \cap H_2)^\omega|_C \subset \cdots \subset TC^\omega.
%$$
% For $j \geq 2$, we consider
%the inclusion
%$$
%(T(H_1 \cap \cdots \cap H_j)|_C)^\omega \subset (T(H_1 \cap \cdots \cap H_{j+1})|_C)^\omega
%$$
%and the quotient line bundle
%$$
%\cL: = (T(H_1 \cap \cdots \cap H_{j+1})|_C)^\omega/(T(H_1 \cap \cdot \cap H_j)|_C)^\omega.
%$$
%This line bundle is trivial by Corollary \ref{cor:cDH-trivial} because we have
%$\cL \cong \iota_{CH_{j+1}}^*\cD_{H_{j+1}}$.
%
%We take a smooth choice of $Z_{j+1}$ on $C$ so that $Z_{j+1}(x) \in T_x C$ and
%$$
%Z_{j+1}(x) \in (T(H_1 \cap \cdots \cap H_{j+1})|_C)^\omega
%\setminus (T(H_1 \cap \cdots \cap H_j)|_C)^\omega
%$$
%and the equivalence class $[Z_{j+1}]$ provides a global section of the line bundle $\cL$. Then the required collection
%$$
%\{Z_1, \cdots, Z_j, Z_{j+1}\}
%$$
%for $j+1$ is constructed. This finishes the proof
%\end{proof}

The so-constructed global frame $\{Z_1, \cdots, Z_m\}$ of $TC$ on $C$ are commuting vector fields.
Therefore we have an $\RR^m$-action on $C$ induced by the flows of commuting vector fields $\{Z_1, \cdots, Z_m\}$.

\begin{lemma}\label{lem:discrete-isotropy} This $\RR^m$-action is also proper and discontinuous.
In particular, its isotropy subgroup is a discrete subgroup of $\RR^m$.
\end{lemma}
\begin{proof}
The Liouville vector field $Z$ is tangent to every $H_i$ near infinity.
Since $Z$ is tangent to $H_i$ for all $i$ near infinity, the flag
$$
H_1 \cap \cdots \cap H_m \subset H_1 \cap \cdots \cap H_{m-1} \subset \cdots \subset H_1
$$
is $Z$-invariant near infinity, and in particular we have
$$
Z \in TC
$$
near infinity of $C$. Since $Z[s] = 1$, $Z$ is also transverse to $s^{-1}(r)$ for all
sufficiently large $r > 0$. Therefore the $\RR^m$-action induces a free $\RR^m/\RR$-action
on the set $\del_\infty C = \del_\infty M \cap C$ of asymptotic Liouville rays tangent to
$C$. Since the latter set is compact, it follows that the $\RR^m/\RR$-action is
proper and discontinuous. Since the flow of $Z$ or the $\RR$-action induced by $Z$
moves the level of $s$ by 1 as time varies by 1, we conclude
 that the $\RR^m$-action on $C$ is proper and discontinuous.

Once the action is proved to be proper and discontinuous, the second statement of the lemma
follows e.g. from the proof in \cite[Section 49, Lemma 3]{arnold:mechanics}, to which we refer.
This finishes the proof.
\end{proof}

With Lemma \ref{lem:discrete-isotropy} in our disposal, the standard argument in the construction of action-angle coordinates
proves that each orbit of the $\RR^m$-action is homeomorphic to $\RR^{n_1} \times T^{n_2}$ for some $n_1, \, n_2$ with
$n_1 + n_2 = n$.  (See \cite[Section 49, Lemma 3]{arnold:mechanics} and its proof.)

Now we immediately conclude the following

\begin{cor}\label{cor:contractible-fiber} Suppose $\pi_C: C \to \cN_C$ has contractible fibers. Then
\begin{enumerate}
\item The $\RR^m$-action is free and
its fiber is naturally diffeomorphic to $\RR^m$, i.e., it is a principle $\RR^m$ bundle over $\cN_C$.
\item The map $\Psi$ is an $\RR^m$-equivariant diffeomorphism with respect to the translations of
$\RR^m$.
\end{enumerate}
\end{cor}

The inverse of $\Psi_C^\sigma$ denoted by
\be\label{eq:PhiC}
\Phi_C^\sigma: \cN_C \times \RR^m \to C
\ee
is also easy to explicitly write down as follows. First we note
$$
t_i^{C,\sigma}(\sigma_{C,i}(\pi_C(x))) = 0
$$
for all $i=1,\ldots, m$ by the definitions of $\sigma_{C,i}$ and $t_i^{C,\sigma}$.
Now let a point
$$
(\ell, (t_1, \ldots, t_m)) \in \cN_C \times \RR^m
$$
be given. Then there is a unique point $x \in C$ satisfying
\be\label{eq:piC}
\begin{cases}
\pi_C(x) = \ell\\
 x = \bigcap_{i=1}^n (t_i^{C,\sigma})^{-1}(t_i).
 \end{cases}
\ee
(See \eqref{eq:tiC} for the definition of $t_i^{C,\sigma}$ and Proposition \ref{prop:W}
for the definition of the vector field $Z_i'$ respectively.)
Then we define $\Phi_C^\sigma(\ell, (t_1, \ldots, t_m))$ to be this unique point. It is easy to
check from definition that $\Phi_C^\sigma$ is indeed the inverse of $\Psi_C^\sigma$. This finishes the proof of
Proposition \ref{prop:PsiC}.
\end{proof}

By applying the above proof and Proposition \ref{prop:PsiC} to any
sub-collection $I \subset \{1, \cdots, m\}$ including the full collection itself,
we also obtain the following stronger form of Theorem \ref{thm:Z-projectable}
\begin{theorem}\label{thm:stronger-Z-projectable}
Let $I \subset \{1, \cdots, m\}$ be any sub-collection, and define
$$
H_I = \bigcap_{i \in I} H_i.
$$
Assume $\pi_{H_I}: H_I \to \cN_{H_I}$ has contractible fibers.
Let $\lambda_{\cN_{H_I}}$ be the canonical induced Liouville form as before. Then
the following hold:
\begin{enumerate}
\item There is an $\RR^{|I|}$-action on $H_I$ that is free, proper and discontinuous
and such that
$H_I$ is foliated by the $\RR^{|I|}$-orbits. In particular the map
$$
\Psi_{H_I}^\sigma: H_I \to \cN_{H_I} \times \RR^{|I|}
$$
is an $\RR^{|I|}$-equivariant diffeomorphism with respect to the $\RR^{|I|}$-action on $H_I$ and
that of linear translations on $\RR^{|I|}$.
\item The leaf space $\cN_{H_I}$
carries a canonical structure of Liouville manifold with boundary and corners.
\end{enumerate}
\end{theorem}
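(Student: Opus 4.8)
The plan is to recognize that the full argument of Proposition~\ref{prop:PsiC} and Theorem~\ref{thm:Z-projectable} was carried out for a clean coisotropic collection of size $m$ with full intersection $C$, and that nothing in those proofs used $m$ beyond the structural hypotheses of Definition~\ref{defn:clean coisotropic collection}. Since any sub-collection $\{H_i\}_{i \in I}$ is again a clean coisotropic collection, of size $|I|$, whose full intersection is precisely $H_I$, the entire machinery should apply verbatim with $H_I$ in the role of $C$, with $|I|$ in the role of $m$, and with the sub-family $\{\sigma_i\}_{i \in I}$ of Choice~\ref{choice:sigma} in the role of $\sigma$. Thus the proof is essentially a matter of checking that all the defining data and hypotheses restrict correctly to the sub-collection, and then invoking the already-established results.

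First I would verify that the sub-collection inherits the clean intersection and coisotropy conditions of Definition~\ref{defn:clean coisotropic collection}: the clean intersection property is inherited because every intersection among the $H_i$ with $i \in I$ already occurs among the intersections of the full collection and is therefore clean, and the coisotropy of each pairwise intersection $H_i \cap H_j$ with $i,j \in I$ is a direct restriction of the corresponding hypothesis. Likewise the Liouville vector field $Z$ is tangent to each $H_i$ with $i \in I$ at infinity, so the sub-flag $\bigcap_{i \in I} H_i \subset \cdots \subset H_{i_1}$ is $Z$-invariant at infinity and $Z \in TH_I$ at infinity of $H_I$; this is exactly the input that Lemma~\ref{lem:discrete-isotropy} requires. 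Finally, the assumption that $\pi_{H_I}: H_I \to \cN_{H_I}$ has contractible fibers is the precise analogue, for the sub-collection, of the $\sigma$-sectorial corner condition, and it feeds directly into Corollary~\ref{cor:contractible-fiber}.

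With these verifications in hand I would run the construction exactly as in the full-collection case. Using the trivializations $\Psi_i^\sigma$ of \eqref{eq:Psi-i-sigma} for $i \in I$, I would assemble the map $\Psi_{H_I}^\sigma: H_I \to \cN_{H_I} \times \RR^{|I|}$ and the commuting frame $\{Z_i\}_{i \in I}$ defined as in \eqref{eq:Zi}, which lie in $\cD_{H_I}$ and satisfy the duality relations $t_j^{H_I,\sigma}(Z_i) = \delta_{ij}$. Their flows generate an $\RR^{|I|}$-action; the $Z$-invariance of the flag at infinity together with compactness of $\del_\infty H_I$ yields properness and discontinuity as in Lemma~\ref{lem:discrete-isotropy}, while contractibility of the fibers upgrades this, via Corollary~\ref{cor:contractible-fiber}, to a free action whose orbits are diffeomorphic to $\RR^{|I|}$, so that $\Psi_{H_I}^\sigma$ becomes the asserted $\RR^{|I|}$-equivariant diffeomorphism. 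Statement~(2) would then follow from the coisotropic-reduction and ideal-boundary-convexity arguments of Section~\ref{sec:intrinsic} applied to the coisotropic submanifold $H_I$, which equip $\cN_{H_I}$ with its reduced symplectic form and the canonical Liouville one-form $\lambda_{\cN_{H_I}}$.

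The main obstacle I anticipate is not the action-angle bookkeeping, which is routine once the frame $\{Z_i\}_{i \in I}$ is in place, but rather the clean justification that the null foliation $\cD_{H_I}$ of the codimension-$|I|$ coisotropic submanifold $H_I$ is spanned by the restrictions of the individual characteristic directions $\cD_{H_i}$, $i \in I$, and that these restrictions remain pairwise commuting and simultaneously tangent to $H_I$. This is the one place where the clean coisotropic hypotheses do genuine work: it requires the compatibility of the various null foliations under passage to a sub-collection, in the same spirit as matching the strata of a toric fan, and establishing it carefully---so that the constructions above are manifestly independent of the order in which the $H_i$ are intersected---is where the bulk of the real argument lies.
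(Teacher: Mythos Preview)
Your proposal is correct and follows exactly the paper's approach: the paper proves this theorem simply by remarking that the proof of Proposition~\ref{prop:PsiC} applies verbatim to any sub-collection $I \subset \{1,\ldots,m\}$, which is precisely the strategy you outline. Your verification that the clean coisotropic hypotheses and $Z$-tangency restrict to the sub-collection, and your identification of the decomposition $\cD_{H_I} = \sum_{i\in I} \cD_{H_i}|_{H_I}$ (the analogue of \eqref{eq:cNC}) as the key structural input, are in fact more explicit than what the paper itself provides.
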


By applying the above to the full collection $C = H_{\{1,\ldots, m\}}$,
we have finished the proof of Theorem \ref{thm:Z-projectable}.

\subsection{Compatibility of null foliations of transverse coisotropic intersections}

Let $C_\delta = C$ as in the previous section and let $\{\sigma_1, \cdots, \sigma_m\}$
a collection of sections $\sigma_i:\cN_{H_i} \to H_i$ made in Choice \ref{choice:sigma}.
For each subset $I \subset \{1, \cdots, m\}$, we have the following section
$$
\sigma_I: \cN_{H_I} \to H_I
$$
defined by
\be\label{eq:sigma-I}
\sigma_I([\ell]): = \Phi_{H_I}^\sigma([\ell], (0,\cdots, 0)) = (\Psi_{H_I}^\sigma)^{-1}([\ell], (0,\cdots, 0))
\ee
for the diffeomorphism $\Phi_{H_I}$ given in \eqref{eq:PhiC} applied to $C = H_I$.

Then for each pair of subsets $I \subset J $ of $\{1,\cdots, n\}$, we have $H_J \subset H_I$ and the map
$$
\psi_{JI}^\sigma: \cN_{H_J} \to \cN_{H_I}
$$
given by
\be\label{eq:psi-IJ}
\psi_{JI}^\sigma([\ell]): = \pi_{\cN_{H_I}}(\Phi_{H_J}^\sigma([\ell], (0,\cdots,0)).
\ee
%
%\red{{\bf To Authors: to be made precise.}
%\begin{remark} Let $\{\sigma_1', \dots, \sigma_m'\}$ be another choice of sections and
%denote by $\psi_{JI}'$ the associated map. Then ?????
%\end{remark}
%}

In particular consider the cases with $I = \{i\}$, $J = \{i, j\}$ and $K = \{i,j,k\}$.
Then we prove the following compatibility of the collection of maps $\psi_{IJ}$:
For each $i \neq j$, we consider the maps
$$
\psi_{ij,i}^\sigma: \cN_{H_i\cap H_j} \to \cN_{H_i}
$$
defined by $\psi_{ij,i}^\sigma := \psi_{\{ij\}\{i\}}$, and the inclusion maps
$$
\iota_{ij,i}: H_i \cap H_j \to H_i.
$$
\begin{prop}\label{prop:sectorial-diagram}
Let $\{H_1,\ldots,H_m\}$ be a collection of hypersurfaces satisfying
only ~\ref{item.  transverse intersection sectorial} and~\ref{item. coisotropic sectorial}.
Then the maps $\psi_{ij,i}^\sigma$ satisfy the following:
\begin{enumerate}
\item They are embeddings.
\item The diagram
\be\label{eq:sectorial-diagram}
\xymatrix{ H_i \cap H_j \ar[r]^{\iota_{ij,i}} \ar[d]_{\pi_{ij}} & H_i \ar[d]^{\pi_i}\\
\cN_{H_i\cap H_j} \ar[r]^{\psi_{ij,i}^\sigma} & \cN_{H_i}
}
\ee
commutes for all pairs $1 \leq i, \, j \leq n$.
\item The diagrams are compatible in the sense that we have
$$
\psi_{ij,i}^\sigma \circ \psi_{ijk,ij}^\sigma = \psi_{ijk,i}^\sigma.
$$
for all triples $1 \leq i,\, j, \, k \leq n$.
\end{enumerate}
\end{prop}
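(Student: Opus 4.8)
The plan is to reduce everything to the infinitesimal structure of the null foliations along $C=H_1\cap\cdots\cap H_m$ and then to a single decoupling lemma asserting that the multi-parameter flow of Proposition~\ref{prop:PsiC} splits as a product of the individual characteristic flows. First I would fix, near a point of $H_i\cap H_j$, local defining functions $\rho_i,\rho_j$ with $H_i=\rho_i^{-1}(0)$ and $H_j=\rho_j^{-1}(0)$, so that the characteristic line $\cD_{H_i}$ is spanned by the Hamiltonian field $X_{\rho_i}$ and likewise for $j$. The clean intersection hypothesis~\ref{item. clean intersection sectorial} gives $T(H_i\cap H_j)=TH_i\cap TH_j$, and the coisotropy~\ref{item. coisotropic sectorial} of $H_i\cap H_j$ forces the null distribution $(T(H_i\cap H_j))^\omega=\cD_{H_i}\oplus\cD_{H_j}$ to be isotropic, i.e. $\{\rho_i,\rho_j\}=0$ along $H_i\cap H_j$ and $\omega(X_{\rho_i},X_{\rho_j})=0$. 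In particular each $X_{\rho_i}$ is tangent to $H_i\cap H_j$, so $\cD_{H_i}$ restricts to a one-dimensional subfoliation of the two-dimensional foliation $\cD_{H_i\cap H_j}$, and more generally $\cD_{H_I}$ is spanned along $H_I$ by $\{X_{\rho_i}\}_{i\in I}$. This is the compatibility of null foliations that makes the successive coisotropic reductions nest.

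The heart of the argument is to identify the generators $Z_i$ of the $\RR^{|I|}$-action built in \eqref{eq:Zi} with the leaf-generating fields of the individual $H_i$, equivalently to show $dt_j^{\sigma_j}|_{\cD_{H_i}}=0$ for $i\neq j$. I would establish this near infinity first: by the construction in \eqref{eq:h} the function $t_j^{\sigma_j}$ agrees there, up to a leafwise constant, with the homogeneous function $h_j=e^{s}\pi^{*}h$ attached to the convex hypersurface $H_j\cap\del_\infty M$, and the relations $dh_j(X_{\rho_j})\neq 0$ together with $dh_j(X_{\rho_i})=0$ (the latter because $X_{\rho_i}$ is tangent to the level sets of $h_j$, using $\{\rho_i,\rho_j\}=0$ and $Z$-homogeneity) give exactly $dt_j^{\sigma_j}|_{\cD_{H_i}}=0$. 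Propagating this over all of $H_I$ through the complete leaf-generating vector field of Definition~\ref{defn:Z'} then yields $Z_i=Z_i'|_{H_I}$ up to positive scale, so that the equivariant diffeomorphism $\Psi_{H_I}^\sigma$ of Theorem~\ref{thm:stronger-Z-projectable} decouples: the coordinate $t_i^{I,\sigma}$ from \eqref{eq:tiC} depends only on the flow of $\cD_{H_i}$ and is constant along every $\cD_{H_k}$ with $k\neq i$. I expect this decoupling lemma to be the main obstacle, since it is precisely the intrinsic replacement for the Poisson-commuting data $\{I_i,I_j\}=0$ imposed as an axiom in condition~\ref{item. I} of Definition~\ref{defn:sectorial-collection}; once it is in hand the rest is bookkeeping.

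Granting the decoupling lemma, the three assertions follow by chasing \eqref{eq:sigma-I}--\eqref{eq:psi-IJ}. For (1), $\psi_{ij,i}^\sigma=\pi_{H_i}\circ\sigma_{\{ij\}}$; injectivity holds because two base points carrying $t_i^{ij,\sigma}=t_j^{ij,\sigma}=0$ that lie on a common $\cD_{H_i}$-leaf must coincide, as $t_j^{ij,\sigma}$ is preserved by the $\cD_{H_i}$-flow and $t_i^{ij,\sigma}$ strictly increases along it, and smoothness of $\sigma_{\{ij\}}$ together with the submersion $\pi_{H_i}$ upgrades this injective immersion to a (closed) embedding. For (2), the same decoupling shows that along $H_i\cap H_j$ the projection $\pi_{H_i}$ factors through the $t_j$-slice and reproduces $\sigma_{\{ij\}}$ after setting the remaining coordinate to $0$, which is the stated commutativity $\pi_i\circ\iota_{ij,i}=\psi_{ij,i}^\sigma\circ\pi_{ij}$. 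For (3), I would note that all the maps $\psi_{JI}^\sigma$ are induced by the single chosen family $\sigma=\{\sigma_k\}$ through the nested trivializations $\Psi_{H_I}^\sigma$, and that by decoupling the passage to the $(0,\dots,0)$-slice in $\RR^{|J|}$ can be carried out one coordinate at a time; associativity of these stagewise slice maps gives $\psi_{ij,i}^\sigma\circ\psi_{ijk,ij}^\sigma=\psi_{ijk,i}^\sigma$, completing the cocycle identity.
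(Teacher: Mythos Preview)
Your route through an explicit ``decoupling lemma'' is not how the paper proceeds. The paper argues the three assertions directly: for (1) it shows injectivity by observing that if two leaves $\ell_1,\ell_2\in\cN_{H_i\cap H_j}$ have the same image then they share a point of $H_i\cap H_j$ and hence coincide, after which the smooth embedding property follows from the smooth structures already placed on the leaf spaces; for (2) it unwinds \eqref{eq:PhiC} to write $x=\Phi_{ij}^\sigma(\pi_{ij}(x),t_1,t_2)$, asserts that this means $x$ is reached from $y=\Phi_{ij}^\sigma(\pi_{ij}(x),0,0)\in\Image\sigma_i\cap\Image\sigma_j$ ``by the characteristic flows of $H_i$ and $H_j$'' and hence $\pi_i(x)=\pi_i(y)$, and then checks separately, via the defining relation \eqref{eq:reduced-form} for the reduced form, that $\psi_{ij,i}^\sigma$ is a symplectic map (a point your outline does not address); for (3) it invokes naturality of coisotropic reduction in stages, comparing the flags $H_i\cap H_j\cap H_k\subset H_i\cap H_j\subset H_i$ and $H_i\cap H_j\cap H_k\subset H_i$. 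In effect you are isolating, and attempting to justify, a step the paper leaves implicit.

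Your justification of the decoupling lemma has a genuine gap, however. The vanishing $dt_j^{\sigma_j}|_{\cD_{H_i}}\equiv 0$ on $H_i\cap H_j$ is \emph{not} forced by \ref{item. clean intersection sectorial} and \ref{item. coisotropic sectorial}, and the proposed propagation from infinity does not work. Recall that the leaf-generating field $Z_j'$ of Definition~\ref{defn:Z'} coincides with the canonical $\widetilde Z$ only on $\nbhd^\pm(\del_\infty M)\cap H_j$; over the compact middle region it is extended as an \emph{arbitrary} unit section of $\cD_{H_j}$ for an auxiliary metric, so the resulting $t_j^{\sigma_j}$ carries no a~priori relation to $\cD_{H_i}$ there. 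Nor is the vanishing a flow-invariant that can be transported inward: it asks a $1$-form to annihilate the \emph{varying} line $\cD_{H_i}|_{H_i\cap H_j}$, and neither the $Z_i'$- nor the $Z_j'$-flow has reason to preserve that line. Concretely, in $(\RR^4,\sum_a dq_a\wedge dp_a)$ take $H_i=\{q_1=0\}$ with $Z_i'=\partial_{p_1}$ and $t_i^{\sigma_i}=p_1$, and deform $H_j$ from $\{q_2=0\}$ to $\{q_2+\chi\, q_1p_1=0\}$ for a bump $\chi$ supported away from infinity and equal to $1$ on an interior region. Then $H_i\cap H_j=\{q_1=q_2=0\}$ is unchanged and \ref{item. clean intersection sectorial}, \ref{item. coisotropic sectorial} still hold, yet where $\chi\equiv 1$ one computes $\cD_{H_j}|_{H_i\cap H_j}=\span\{-p_1\,\partial_{p_1}-\partial_{p_2}\}$ and hence $dt_i^{\sigma_i}|_{\cD_{H_j}}=-p_1\not\equiv 0$. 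This is precisely why condition~\ref{item. I} of Definition~\ref{defn:sectorial-collection} is imposed as an axiom in \cite{gps-2} rather than derived. Since your arguments for (1)--(3) all rest on decoupling, the gap propagates throughout.
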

\begin{proof}
We first show that the map $\psi_{ij,i}^\sigma$ is an embedding.
Let $\ell_1, \, \ell_2$ be two leaves of the null-foliation of $H_i \cap H_j$
such that
$$
\ell_1 \cap H_i = \ell_2 \cap H_i.
$$
By definition of leaves, we have only to show that $\ell_i \cap \ell_j \neq \emptyset$.

Let $x \in H_i$ be in the above two common intersection which obviously implies
$$
x \in \ell_1 \cap \ell_2 \subset H_i \cap H_j.
$$
This proves $\psi_{ij,i}^\sigma$ is a one-one map.
Then smoothness and the embedding property of $\psi_{ij,i}^\sigma$
follow from the definition of smooth structures given on the leaf spaces.

For the commutativity, we first note
\be\label{eq:psi-iji=}
\psi_{ij,i}^\sigma (\pi_{ij}(x)) = \pi_i(\Phi_{ij}^\sigma((\pi_{ij}(x), 0,0)))
\ee
by the definition of the maps $\psi_{ij,i}^\sigma$.
But by the definition \eqref{eq:PhiC} of $\Phi_{ij}^\sigma$, the point
$$
y := \Phi_{ij}^\sigma((\pi_{ij}(x), 0,0))
$$
is the intersection point
$$
y \in \Image \sigma_i \cap \Image \sigma_j.
$$
Since $x \in H_i \cap H_j$, we can express it as
$$
x = \Phi_{ij}^\sigma(\pi_{ij}(x), t_1, t_2)
$$
for some $t_1, \, t_2 \in \RR$. In other words, it is obtained from $y$ by the
characteristic flows of $H_i$ and $H_j$ by definition of $\Phi_{ij}^\sigma$ in \eqref{eq:PhiC}. In particular, we have
$$
\pi_i(\iota_{ij,i}(x)) = \pi_i(y).
$$
On the other hand, the definition of the null foliation of $\cN_{H_i}$ implies
\be\label{eq:pi-i=}
\pi_i(y) = \psi_{ij,i}^\sigma (\pi_{ij}(x))
\ee
for all $x \in H_i \cap H_j$. Combining the last two equalities and commutativity of the
diagram $\pi_i \circ \iota_{ij,i} = \psi_{ij,i}^\sigma \circ \pi_{ij}$,
we have proved the commutativity of \eqref{eq:sectorial-diagram}.

Finally we show that $\psi_{ij,i}^\sigma$ is a symplectic map. Consider the pull-back
$$
\omega_{ij}^\sigma: = (\psi_{ij,i}^\sigma)^*(\omega_{\cN_{H_i}}).
$$
We will show that $\omega_{ij}^\sigma$ satisfies the defining property
$$
\pi_{H_i \cap H_j}^*\omega_{ij}^\sigma = \iota_{H_i \cap H_j}^*\omega, \quad \omega = d\lambda
$$
of the reduced form on $\cN_{H_i \cap H_j}$ under the coisotropic reduction on
the coisotropic submanifolds $H_i \cap H_j \subset M$.  We compute
\beastar
\pi_{H_i \cap H_j}^*\omega_{ij}^\sigma & = & \pi_{H_i \cap H_j}^*(\psi_{ij,i}^\sigma)^*(\omega_{\cN_{H_i}})\\
& = & (\psi_{ij,i}^\sigma \circ \pi_{H_i \cap H_j})^*(\omega_{\cN_{H_i}}) \\
& = & (\pi_{H_i}\circ \iota_{H_i\cap H_j,H_i})^* \omega_{\cN_{H_i}} \\
& = & (\iota_{H_i\cap H_j,H_i})^*(\pi_{H_i}^*\omega_{\cN_{H_i}}) \\
& = &  (\iota_{H_i\cap H_j,H_i})^*(\iota_{H_i}^*\omega)
= \iota_{H_i\cap H_j}^*\omega
\eeastar
where we use the defining condition of the reduced form $\omega_{\cN_{H_i}}$ of $\omega_{\del H_i}$
$$
\pi_{H_i}^*\omega_{\cN_{H_i}} = \iota_{H_i}^*\omega
$$
for the penultimate equality. Therefore we have proved
$$
\pi_{H_i \cap H_j}^*\omega_{ij}^\sigma = \iota_{H_i \cap H_j}^*\omega.
$$
This shows that the form $\omega_{ij}^\sigma$ satisfies the defining equation \eqref{eq:reduced-form}
of the reduced form $\omega_{H_i \cap H_j}$. Then by the uniqueness of the reduced form,
we have derived
$$
\omega_{ij}^\sigma = \omega_{H_i \cap H_j}.
$$
This proves $(\psi_{ij,i}^\sigma)^*\omega_{H_i} = \omega_{H_i \cap H_j}$, which finishes the proof of
Statement (1).

Statement (2) also follows by a similar argument this time from the naturality of the \emph{coisotropic reduction
by stages}: Consider $H_i, \, H_j, \, H_k$ in the given coisotropic collection and consider the two flags
\be\label{eq:flag1}
H_i \cap H_j \cap H_k \subset H_i \cap H_j \subset H_i
\ee
and
\be\label{eq:flag2}
H_i \cap H_j \cap H_k \subset  H_i.
\ee
The composition $\psi_{ij,i}^\sigma \circ \psi_{ijk,ij}^\sigma$ is the map obtained by
the coisotropic reductions in two stages and $\psi_{ijk,i}^\sigma$ is the one obtained by
the one stage reduction performed in the proof of Statement 1 with the replacement of the pair
$(H_i \cap H_j, H_i)$ by $(H_i\cap H_j \cap H_k, H_i)$. Then by the naturality of
the coisotropic reduction, we have proved Statement 2. This finishes the proof of the proposition.
\end{proof}

The following is an immediate corollary of the above proposition and its proof.
(See Remark \ref{rem:stratawise-presymplectic} for the relevant remark on the stratified
presymplectic manifolds.)

\begin{cor} The collection of maps
$$
\{\psi_{I}\}_{I \subset \{1, \ldots, m\}}
$$
are compatible in that the leaf space $\cN_{H_I}$ carries the structure of symplectic manifold
with boundary and corners.
\end{cor}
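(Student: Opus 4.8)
The plan is to assemble the individual leaf spaces $\cN_{H_I}$, each already known to be a symplectic manifold with boundary and corners by Theorem~\ref{thm:stronger-Z-projectable}, into a single stratified object via the transition maps $\psi_{JI}^\sigma$, and then to verify that the resulting data satisfy the stratawise compatibility condition recorded in Remark~\ref{rem:stratawise-presymplectic}. Thus the whole content is to promote the pairwise/triple-wise statements of Proposition~\ref{prop:sectorial-diagram} to an assertion about the full poset of subsets $I \subset \{1,\ldots,m\}$.

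First I would extend the three assertions of Proposition~\ref{prop:sectorial-diagram}, stated there for the chains $\{i\} \subset \{i,j\} \subset \{i,j,k\}$, to arbitrary nested subsets $I \subset J \subset K$. The embedding property together with the symplectic identity
\[
(\psi_{JI}^\sigma)^*\omega_{\cN_{H_I}} = \omega_{\cN_{H_J}}
\]
follows verbatim from the computation in the proof of Proposition~\ref{prop:sectorial-diagram}, with the single coisotropic reduction $H_i \cap H_j \subset M$ replaced by $H_J \subset M$ and the defining relation \eqref{eq:reduced-form} applied to $H_I$ in place of $H_i$. The cocycle relation
\[
\psi_{JI}^\sigma \circ \psi_{KJ}^\sigma = \psi_{KI}^\sigma, \qquad I \subset J \subset K,
\]
is exactly the naturality of coisotropic reduction by stages used in Statement (3), now applied to the flag $H_K \subset H_J \subset H_I$.

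Next, for a fixed $I$, I would identify the boundary-and-corner stratification of the manifold-with-corners $\cN_{H_I}$ produced by Theorem~\ref{thm:stronger-Z-projectable} with the poset of images $\{\psi_{JI}^\sigma(\cN_{H_J})\}_{J \supseteq I}$: the open stratum is $\cN_{H_I}$ itself, and each deeper stratum $\psi_{JI}^\sigma(\cN_{H_J})$ is the symplectic reduction of the boundary face $H_J \subset H_I$. Each such stratum carries the reduced form $\omega_{\cN_{H_J}}$, and by the extended symplectic identity these forms are compatible under the stratum inclusions $\psi_{JI}^\sigma$ in precisely the sense $\omega_{\cN_{H_J}} = (\psi_{JI}^\sigma)^*\omega_{\cN_{H_I}}$ demanded in Remark~\ref{rem:stratawise-presymplectic}, while the cocycle relation guarantees consistency along composable chains of inclusions. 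This exhibits $\{\omega_{\cN_{H_J}}\}_{J \supseteq I}$ as a stratawise symplectic form on $\cN_{H_I}$, which is the assertion.

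The main obstacle I anticipate is this identification of strata: one must check that the faces of $\cN_{H_I}$ as a manifold with corners are exactly the images $\psi_{JI}^\sigma(\cN_{H_J})$ and that these are embedded cleanly, so that the $\psi_{JI}^\sigma$ really are the canonical stratum inclusions rather than merely a compatible family of symplectic embeddings (note that each inclusion is of even codimension, so the strata are symplectic, as Remark~\ref{rem:stratawise-presymplectic} allows). This rests on the clean-intersection hypothesis of Definition~\ref{defn:clean coisotropic collection} together with the equivariant splitting of Proposition~\ref{prop:PsiC}, which lets one read off the local corner model of $\cN_{H_I}$ from the product structure $H_I \cong \cN_{H_I} \times \RR^{|I|}$; once this local model is matched with the $\RR^{|J|}$-orbit structure along $H_J$, the remaining verification is the bookkeeping already carried out in the proof of Proposition~\ref{prop:sectorial-diagram}.
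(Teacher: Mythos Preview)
The paper offers no separate proof, declaring the corollary immediate from Proposition~\ref{prop:sectorial-diagram} and its proof. Your first two steps---extending the embedding, symplectic-pullback, and cocycle statements from the chains $\{i\}\subset\{i,j\}\subset\{i,j,k\}$ to arbitrary $I\subset J\subset K$ by the same reduction-by-stages argument---are exactly the intended unpacking and suffice for what the corollary actually asserts.

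Your third step, however, contains a dimension error. You propose to identify the corner strata of $\cN_{H_I}$ (as a manifold with corners) with the images $\psi_{JI}^\sigma(\cN_{H_J})$, but $\dim\cN_{H_J}=2n-2|J|$ while $\dim\cN_{H_I}=2n-2|I|$, so for $|J|=|I|+1$ the image has codimension $2$, not $1$. The actual boundary face of $\cN_{H_I}$ arising from $H_J\subset H_I$ is the codimension-$1$ coisotropic hypersurface $\pi_{H_I}(H_J)$; the image $\psi_{JI}^\sigma(\cN_{H_J})$ is the symplectic \emph{reduction} of that hypersurface, not the hypersurface itself. The clean-intersection and equivariant-splitting inputs you invoke cannot close a gap that is purely dimensional, and your parenthetical ``each inclusion is of even codimension'' is a symptom: Remark~\ref{rem:stratawise-presymplectic} does not license replacing codimension-$1$ boundary strata by codimension-$2$ submanifolds. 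This identification is in any case unnecessary for the corollary; drop it, and your first two steps already deliver the compatible system of symplectic maps together with the manifold-with-corners structure on each $\cN_{H_I}$ supplied by Theorem~\ref{thm:stronger-Z-projectable}.
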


\section{Liouville $\sigma$-sectors and canonical splitting data}

Let  $\{H_1, \cdots, H_m\}$ be a transverse coisotropic collection
as in Definition \ref{defn:transverse coisotropic collection}.
We denote their intersection by
$$
C = H_1 \cap \cdots \cap H_m
$$
as before,
which is a  coisotropic submanifold of codimension $m$ associated thereto.

\subsection{Definition of Liouville $\sigma$-sectors with corners}

Denote
by $\iota_{CH_i}: C \to H_i$ the inclusion map, and $\sigma = \{\sigma_1, \ldots, \sigma_m\}$ be
the collection as before.
This induces the diagram
\be\label{eq:sectorial-diagram-C}
\xymatrix{C \ar[r]^{\iota_{CH_i}} \ar[d]_{\pi_{C}} & H_i \ar[d]^{\pi_i}\\
\cN_{C} \ar[r]^{\psi_{CH_i}^\sigma} & \cN_{H_i}
}
\ee
for all $i$ which are compatible in the sense of Statement (2) of Proposition \ref{prop:sectorial-diagram}.
In fact, we have
\be\label{eq:cNC}
\cD_C = \cD_{H_1}|_C + \cD_{H_2}|_C + \cdots + \cD_{H_m}|_C
\ee
which canonically induces the leaf map $\psi_{CH_i}^\sigma$ in the bottom arrow
that makes the diagram commute.

With these preparations, we are finally ready to provide the sectional characterization of
Liouville sectors with corners.

\begin{defn}[Liouville $\sigma$-sectors with corners]\label{defn:intrinsic-corners}
Let $M$ be a manifold with corners equipped with
a Liouville one-form $\lambda$. We call $(M,\lambda)$ a \emph{Liouville $\sigma$-sector with corners}
if at each sectorial corner $\delta$ of $\del M$, the corner can be expressed as
$$
C_\delta := H_{\delta,1} \cap \cdots \cap H_{\delta,m}
$$
for a transverse coisotropic collection
$$
\{H_{\delta,1}, \cdots, H_{\delta,m}\}
$$
of $\sigma$-sectorial hypersurfaces such that fibers of the map
$$
\pi_{C_\delta}: C_\delta \to \cN_{C_\delta}
$$
are contractible. We call such a corner $C_\delta$ a \emph{$\sigma$-sectorial corner of codimension $m$}.
\end{defn}

In the remaining section, we will derive the consequences of this definition.
The following monoidal property is apparent from our definition of Liouville $\sigma$-sectors.

\begin{prop}\label{prop:product} The set of Liouville $\sigma$-sectors with corners
is a monoid.
\end{prop}
\begin{proof} Let $M_1$ and $M_2$ be Liouville $\sigma$-sectors with corners.
For the simplicity of exposition, we assume $M_i$ without corners. The general case
follows by similar arguments.

Recall that the set of manifolds with corners naturally forms a monoid and so
$M_1 \times M_2$ is a manifold with corners with its boundary and corners given by
\beastar
\del(M_1 \times M_2) & = & \del M_1 \times M_2 \cup M_1 \times \del M_2= : H_1 \cup H_2 \\
C_2 & = & \del M_1 \times \del M_2 = H_1 \cap H_2.
\eeastar
Obviously both are coisotropic submanifolds of codimension 1 and 2 respectively,
and $C_2$ is a manifold without boundary. The required transversality hypothesis 
trivially holds.

It remains to show the property of their null foliations. 
Let $\cF_i$ be the associated characteristic foliation of $\del M_i$.
Then the characteristic distribution of $\del(M_1\times M_2)$ is given by
$$
\cD_1 \oplus \{0\}, \quad \{0\} \oplus \cD_2
$$
on $\del M_1 \times M_2$ (resp. $M_1 \times \del M_2$) whose
leaf $\cF_{\del(M_1 \times M_2),(x,y)}$ is 
 given by $(\cF_1)_x \times \{y\}$ (resp. $\{x\} \times (\cF_2)_y$) which 
 is obviously trivial.  Furthermore the required section $\sigma_{\del(M_1\times M_2)}$
is given by the map $\sigma_1 \times \id_Y$ and $\id_X \times \sigma_2$, respectively.
 
 On the corner $C_2$, its null distribution is given by
 $$
 \cD_{M_1 \times M_2;(x,y)} = \cD_{1,x} \oplus \cD_{2,y}
 $$
 whose associated leaves are $\cF_{1,x} \times \cF_{2,x}$ which are 
 clearly contractible if so are $\cF_{1,x}$ and $\cF_{2,x}$. The product of 
Liouville sectors with corners can be treated similarly whose details are omitted.
 This finishes the proof.
 \end{proof}

\subsection{Integrable systems and canonical splitting data}
\label{subsec:integrable}

By applying Theorem \ref{thm:normal-form} to the coisotropic submanifold $C$,
we will obtain a neighborhood $\nbhd(C) \subset M$ and the projection
$$
\widetilde \pi_C: \nbhd(C) \to C.
$$
\begin{choice}[Splitting $\Gamma_C^\sigma$]\label{choice:GC}
Let $\sigma=\{\sigma_1,\cdots, \sigma_m\}$ be a choice of sections of
transverse coisotropic collection $\{H_1, \cdots, H_m\}$.
Then we associate the splitting
\be\label{eq:C-splitting}
\Gamma = \Gamma_C^\sigma : \quad TC = G_C^\sigma \oplus \cD_C
\ee
thereto given by the transverse symplectic subspace
\be\label{eq:GC}
G_C^\sigma |_x : = (d\Psi_C^\sigma|_x)^{-1}(T_{\pi_C(x)}\cN_C \oplus \{0\}_{\RR^m}).
\ee
\end{choice}
Applying Theorem \ref{thm:normal-form}, we obtain a diffeomorphism
$$
\Psi_\Gamma^\sigma: \nbhd(C) \to V \subset E^* = T^*\cF_C
$$
where $\cF_C$ is the null foliation of $C$.
Furthermore the pushforward of symplectic form $d\lambda$ on $U$
is given by the canonical Gotay's symplectic form on $V \subset E^*$
$$
(\Psi_\Gamma^\sigma)_*(d\lambda) = \pi^*\omega_C - d\theta_\Gamma
$$
for the presymplectic form $\omega_C = \iota_C^*(d\lambda)$ on $C$.
(See Theorem \ref{thm:normal-form}.)

Note that we have
$$
\cD_C|_x = \text{\rm span}_\RR\{Z_1(x), \cdots, Z_m(x)\}
$$
by definition of $Z_i$ above.
Therefore the aforementioned $\RR^m$-action induces an
$\RR^m$-equivariant bundle isomorphism
$$
\cD_C \cong C \times \RR^m
$$
over $C$.
(This isomorphism does not depend on the choice of $\sigma$ but depends only
on the Liouville geometry of $\nbhd(C \cap \del_\infty M)$. )

Then we have made the aforementioned splitting
$
TC = G_C^\sigma \oplus \cD_C
$
given in \eqref{eq:GC} $\RR^m$-equivariant. In other words,
for each group element $\mathsf t = (t_1, \dots, t_m) \in \RR^m$, we have the equality
$$
d \mathsf t (G_x^\sigma ) = G_{\mathsf t\cdot x}^\sigma.
$$
For a fixed $\alpha> 0$, we put
\be\label{eq:definition-Ii}
I_i^\sigma = \pm e^{\alpha t_i^{C,\sigma}}
\ee
which then satisfies $dI_i^\sigma(Z_i) = \alpha\, I_i^\sigma$ on $C$.

Noting that the induced $\RR^m$-action on $TC$ preserves the subbundle
$$
T \cF_C = \cD_C \subset TC,
$$
the canonically induced action on $T^*C$ also preserves the subbundle
$$
\cD_C^\perp \subset T^*C
$$
for which we have the isomorphism
$$
T^*\cF \cong \cD_C^{\perp}.
$$
Therefore the $\RR^m$-action on $C$ can be lifted to $T^*\cF$ which is the restriction of
the canonical induced action on $T^*C$ of the one on $C$.
\begin{lemma} We can
lift the vector fields $Z_j$'s to $Z'_j$ on $T^*\cF$ which are the generators of
the induced $\RR^m$-action such that
\begin{enumerate}
\item  $Z'_j|_C = Z_j$,
\item The collection $\{Z'_j\}$ are commuting.
\end{enumerate}
\end{lemma}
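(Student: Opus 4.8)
The plan is to produce $\widetilde Z_j$ as the fundamental vector fields of the lifted $\RR^m$-action on $E^* = T^*\cF_C$ that was constructed in the paragraph preceding the lemma, and then to read off the two asserted properties from the general formalism of lifted group actions, so that essentially no computation is required.

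First I would recall that the $\RR^m$-action on $C$ of Proposition~\ref{prop:PsiC} has the commuting frame $\{Z_1,\dots,Z_m\}$ as its fundamental vector fields, associated to the standard basis $e_1,\dots,e_m$ of $\RR^m$. This action preserves the subbundle $T\cF_C = \cD_C \subset TC$, since the $Z_i$ are sections of $\cD_C$ and, by the displayed equivariance $d\mathsf t\,(G_x^\sigma) = G_{\mathsf t\cdot x}^\sigma$, the splitting $TC = G_C^\sigma \oplus \cD_C$ of Choice~\ref{choice:GC} is itself $\RR^m$-equivariant. Consequently the canonical cotangent lift of this action to $T^*C$ preserves the dual splitting $T^*C \cong (G_C^\sigma)^* \oplus (\cD_C)^*$, and in particular preserves the summand $(\cD_C)^* \cong T^*\cF_C = E^*$ identified with $\cD_C^\perp$ before the statement. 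Restricting the lifted action to this invariant subbundle gives the $\RR^m$-action on $E^*$, and I then \emph{define} $\widetilde Z_j$ to be the fundamental vector field of this lifted action corresponding to $e_j$.

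Property (2) is then automatic and needs no calculation: the fundamental vector fields of an action of the abelian group $\RR^m$ associated to the (commuting) basis elements $e_i, e_j$ automatically commute, so $[\widetilde Z_i,\widetilde Z_j]=0$. For Property (1) I would invoke two standard features of the lift. Since the lifted flow covers the base flow, the projection $\pi_{E^*}\colon E^*\to C$ is $\RR^m$-equivariant, which differentiates to the $\pi_{E^*}$-relatedness $T\pi_{E^*}\circ\widetilde Z_j = Z_j\circ\pi_{E^*}$. Since a cotangent-type lift is fibrewise linear, it fixes the zero section $C\hookrightarrow E^*$, so $\widetilde Z_j$ is tangent to $C$ along $C$. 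Along the zero section $T\pi_{E^*}$ is the identity, so these two facts force $\widetilde Z_j|_C = Z_j$ with no fibre component.

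The only point genuinely requiring care---and the one I expect to be the main obstacle to phrase cleanly---is the tangency assertion: that the cotangent-lifted generators are tangent to the subbundle $E^*\subset T^*C$, and not merely to $T^*C$. This is exactly where the $\RR^m$-equivariance of the splitting $\Gamma_C^\sigma$ is used, as invariance of $\cD_C$ (equivalently of $G_C^\sigma$) forces invariance of the dual summand $(\cD_C)^*\cong T^*\cF_C$. Once this is in place, both conclusions follow formally from the fundamental-vector-field calculus for the abelian group $\RR^m$, so I would keep this part of the argument short and simply cite the invariance of the splitting established just above.
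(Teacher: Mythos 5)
Your proposal is correct and follows essentially the same route as the paper: define $\widetilde Z_j$ via the cotangent lift of the flows $\phi_{Z_j}^t$ (the paper writes this as the generator of $((d\phi_{Z_j}^t)^*)^{-1}$ on $T^*C$, restricted to the invariant subbundle identified with $T^*\cF$), deduce commutativity from the commutativity of the base flows, and deduce $\widetilde Z_j|_C = Z_j$ from the fact that the fibrewise-linear lift preserves the zero section. Your extra care about why the lift is tangent to the subbundle $E^*\subset T^*C$ (via equivariance of the splitting $\Gamma_C^\sigma$) matches the remarks the paper makes in the paragraph immediately preceding the lemma.
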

\begin{proof} Let $\phi_{Z_j}^t$ be the flow of $Z_j$ on $C$.
Since the $\RR^m$-action is abelian, the vector fields $Z_j$'s
are pairwise commuting. Then the lifting $Z'_j$ is
nothing but the vector field generating the isotopy of canonical derivative maps
$$
((d\phi_{Z_j}^t)^*)^{-1}: T^*C \to T^*C
$$
on $T^*C$. Since the flows $\phi_{Z_j}^t$ are commuting, their derivatives are also commuting.
Then obviously their dual flows $((d\phi_{Z_j}^t)^*)^{-1}$ on $T^*C$ are also commuting
and hence $Z'_j$'s too. The first condition also follows since any derivative maps zero vector to
a zero vector. This finishes the proof.
\end{proof}

We now define
$$
\widetilde I_i^\sigma = I_i^\sigma \circ \pi_{T^*\cF}.
$$
Then $\{d\widetilde I_1^\sigma,\cdots, d\widetilde I_m^\sigma\}$ are
linearly independent on a neighborhood of the zero section of $T^*\cF$
if we choose the neighborhood small enough. This is because $\{dI_1^\sigma, \ldots, dI_m^\sigma\}$
are linearly independent on $C$.
By suitably adjusting the parametrization $t_i^{C,\sigma}$ of the $\RR^m$-action, we can make
the equation
\be\label{eq:tilde-dIiZj}
d\widetilde I_i^\sigma(Z'_j) = \alpha \delta_{ij} \widetilde I_i^\sigma
\ee
hold.

\emph{This is precisely the situation of
completely integrable system to which we can apply the standard construction of
action-angle coordinates.} (See  \cite[Section 49]{arnold:mechanics} for example.)
Therefore, regarding $\{\widetilde I_1^\sigma, \ldots, \widetilde I_m^\sigma\}$ as the
(fiberwise) angle coordinates,
we can find a unique choice of (fiberwise) action coordinates
$$
\{\widetilde R_1^\sigma, \cdots, \widetilde R_m^\sigma\}
$$
over $\cN_C$ satisfying
$$
\{\widetilde R_i^\sigma, \widetilde I_j^\sigma\} = \delta_{ij}, \quad \widetilde R_i^\sigma
\circ \Phi_C^\sigma|_{H_j} = 0
$$
on a neighborhood $V \subset T^*\cF_C$ of the zero section $0_{T^*\cF_C} \cong C$.
Now we define the pull-back functions
$$
R_i^\sigma: = \widetilde R_i^\sigma \circ \Phi_C^\sigma, \quad I_j^\sigma: = \widetilde I_j^\sigma \circ \Phi_C^\sigma
$$
on $U= \nbhd(C)$. We also pull-back the vector fields $Z'_j$ to $\nbhd(C)$ by $\Phi_C^\sigma$ and denote them
by $Z_j$.
(Note that the notations $I_j^\sigma$ and $Z_j$ are consistent in that their restrictions to
$C$ are nothing but the above already given $I_j^\sigma$ or $Z_j$ respectively on $C$.)
Furthermore, we have the relationship
$$
Z_j = X_{R_j^\sigma}.
$$
(See the definition \eqref{eq:Zi} of $Z_i$ on $C$.)

Then we have
$$
\{R_i^\sigma,R_j^\sigma\} = \omega(X_{R_i^\sigma},X_{R_j^\sigma})= \omega(Z_i,Z_j) = 0
$$
on $\nbhd(C)$. Since
$
Z_i : = X_{R_i^\sigma},
$
we have
\be\label{eq:moment-map}
Z_i \rfloor \omega = dR_i^\sigma
\ee
on $U = \nbhd^Z(C)$.  This is precisely the defining equation of the moment map
$\phi_{G,C}^\sigma: \nbhd(C) \to \frak g^* \cong \RR^m$ with $G = \RR^m$
given by
$$
\phi_{G,C}^\sigma(x) = (R_1^\sigma(x), \cdots, R_m^\sigma(x))
$$
for the above $G=\RR^m$-action.
Recall that the hypersurfaces $H_i$ are $Z$-invariant near infinity.
Therefore we can choose the neighborhood $\nbhd(C)$ so that it is $Z$-invariant near infinity.
Then by the requirement put on the Liouville vector field $Z$
which is pointing outward along $\del M$,  we can choose the whole neighborhood $\nbhd(C)$
$Z$-invariant. Together with the normalization condition of $R_i$'s
$$
R_i^\sigma|_{H_i} = \widetilde R_i \circ \Phi_C^\sigma|_{H_i} = 0,
$$
it also implies $R_i^\sigma \geq 0$ on $\nbhd(C)$ for all $i$.
We now take the neighborhood $U \subset M$ to be this $Z$-invariant neighborhood
$$
U = \nbhd^Z(C).
$$

The content of the above discussion can be summarized into the following
intrinsic derivation of the splitting data.
\begin{theorem}[$\sigma$-Splitting data]\label{thm:splitting-data-corners}
Let $C \subset \del M$ be a sectorial corner of codimension $n$ associated to
the sectorial coisotropic collection $\{H_1, \ldots, H_m\}$ on $\del M$. Then for each
choice
$$
\sigma = \{\sigma_1, \cdots, \sigma_m\}
$$
of sections $\sigma_i: \cN_{H_i} \to H_i$ of $\pi_{H_i}$ for $i = 1, \cdots, n$,
there is a diffeomorphism
$$
\Psi_C^\sigma: \nbhd^Z(C) \cap \del M \to F \times \RR^m
$$
and
$$
\psi_C^\sigma: \cN_{C} \to F_C^\sigma
$$
such that
\begin{enumerate}
\item $F_C^\sigma = \text{\rm Image } \sigma_1 \cap \cdots \cap \text{\rm Image } \sigma_m$,
\item $(\Psi_C^\sigma)_*\omega_\del = \pi_F^*\omega_F$,
\item The following diagram
\be\label{eq:split-coisotropic-diagram}
\xymatrix{ \del M|_{C} \ar[r]^{\Psi_C^\sigma} \ar[d]^{\pi_{\del M}} & F_C^\sigma \times \RR^m \ar[d]_{\pi_{F_C^\sigma}}\\
\cN_{\del M|_{C}} \ar[r]_{\psi_C^\sigma} & F_C^\sigma. &
}
\ee
commutes for the map
$$
\Psi_C^\sigma = (\sigma_C \circ \pi_{F_C},(I_1^\sigma, \cdots, I_m)).
$$
\item The $G$-action with $G = \RR^m$ has the moment map $\phi_{G,C}^\sigma: \nbhd_\epsilon^Z(C) \to \RR^m$
given by
$$
\phi_{G,C}^\sigma = (R_1^\sigma, \cdots, R_m^\sigma)
$$
for a collection of Poisson-commuting $R_i$'s satisfying the simultaneous normalization condition
$$
R_i^\sigma|_{H_i} = 0, \quad R_i^\sigma \geq 0
$$
for all $i$ on $\nbhd^Z(C)$.
\item The map $\nbhd(\del M) \to F_C \times \C^m_{\text{\rm Re} \geq 0}$ is given by the formula
\be\label{eq:tilde-PsiC}
\widetilde \Psi_C^\sigma(x) = \left(\sigma_C(\pi_{F_C}(x)), R_1^\sigma(x) + \sqrt{-1} I_1^\sigma(x), \ldots, R_m^\sigma(x) + \sqrt{-1}I_m^\sigma(x)\right).
\ee
such that
\be\label{eq:tilde-Psi*-omega}
(\widetilde \Psi_C^\sigma)_*\omega = \pi_F^*\omega_{F_C} + \sum_{i=1}^m dR_i^\sigma \wedge dI_i^\sigma.
\ee
\end{enumerate}
We call these data a \emph{$\sigma$-splitting data} of $\nbhd(C)$ associated to
the choice $\sigma = \{\sigma_1, \cdots, \sigma_m\}$ of sections $\sigma_i: \cN_{H_i} \to H_i$.
\end{theorem}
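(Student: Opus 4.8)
The plan is to assemble the objects produced in the preceding discussion and to check that they realize the five asserted properties. Almost all of the analytic content has already been established in the construction of the action--angle coordinates $(R_i^\sigma, I_i^\sigma)$, so the bulk of the argument is bookkeeping, with a single genuine computation needed for the splitting of the symplectic form in Statement~(5).

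First I would record the basic identifications. By Proposition~\ref{prop:PsiC} and Corollary~\ref{cor:contractible-fiber} the map $\Psi_C^\sigma: C \to \cN_C \times \RR^m$ of \eqref{eq:PsiC} is an $\RR^m$-equivariant diffeomorphism, and the section $\sigma_C := \sigma_{\{1,\dots,m\}}$ of \eqref{eq:sigma-I} cuts out $\cN_C$ as the common zero locus $\{t_1^{C,\sigma} = \cdots = t_m^{C,\sigma} = 0\}$. Since $t_i^{C,\sigma} = t_i^{\sigma_i}|_C$ vanishes precisely on $\Image \sigma_i$, its image $F_C^\sigma := \Image \sigma_C$ equals $\Image \sigma_1 \cap \cdots \cap \Image \sigma_m$, which is Statement~(1). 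Statement~(3) is then immediate from the formula $\Psi_C^\sigma = (\sigma_C \circ \pi_{F_C}, (I_1^\sigma, \dots, I_m^\sigma))$ with $I_i^\sigma = \pm e^{\alpha t_i^{C,\sigma}}$ of \eqref{eq:definition-Ii}: the diagram commutes because $\pi_{F_C}$ is by construction the leaf-coordinate projection, and $\psi_C^\sigma$ is the induced diffeomorphism $\cN_C \to F_C^\sigma$.

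For Statement~(2) I would invoke the coisotropic reduction already carried out in Theorem~\ref{thm:equivalence}, now applied to the corner $C$: the section $\sigma_C$ satisfies $\sigma_C^*\omega_\del = \omega_{\cN_C}$, and transporting this identity along $\psi_C^\sigma$ yields $(\Psi_C^\sigma)_*\omega_\del = \pi_F^*\omega_F$. For Statement~(4) I would simply collect the identities established above: the generators of the $\RR^m$-action are $Z_i = X_{R_i^\sigma}$, so \eqref{eq:moment-map} exhibits $\phi_{G,C}^\sigma = (R_1^\sigma, \dots, R_m^\sigma)$ as the moment map; Poisson commutativity follows from $\{R_i^\sigma, R_j^\sigma\} = \omega(Z_i, Z_j) = 0$, using $[Z_i, Z_j] = 0$; and the simultaneous normalization $R_i^\sigma|_{H_i} = 0$ together with positivity $R_i^\sigma \geq 0$ on the $Z$-invariant neighborhood $\nbhd^Z(C)$ is exactly what the action-coordinate construction and the outward-pointing hypothesis on $Z$ provide.

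The main work, and the one step requiring real care, is Statement~(5): the \emph{exact} splitting $(\widetilde \Psi_C^\sigma)_*\omega = \pi_F^*\omega_{F_C} + \sum_{i=1}^m dR_i^\sigma \wedge dI_i^\sigma$. Here I would push Gotay's normal form $(\Psi_\Gamma^\sigma)_*(d\lambda) = \pi^*\omega_C - d\theta_\Gamma$ of Theorem~\ref{thm:normal-form} through the coordinates $(R_i^\sigma, I_i^\sigma)$. The difficulty is to rule out cross terms mixing the base factor $F_C$ with the fiber factor $\C^m$: a priori $\pi^*\omega_C - d\theta_\Gamma$ could contain contributions of the form $dR_i^\sigma \wedge (\text{base})$ or $dI_i^\sigma \wedge (\text{base})$. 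The resolution is the $\RR^m$-equivariance of the chosen splitting $\Gamma_C^\sigma$ built in \eqref{eq:GC}, i.e.\ the identity $d\mathsf t\,(G_x^\sigma) = G_{\mathsf t\cdot x}^\sigma$: this makes the horizontal subbundle $G_C^\sigma$ invariant under the flows of the $Z_i = X_{R_i^\sigma}$, so that both $dR_i^\sigma$ and $dI_i^\sigma$ annihilate $G_C^\sigma$ while the reduced form $\pi_F^*\omega_{F_C}$ annihilates the fiber directions. Combined with the conjugacy relations $\{R_i^\sigma, I_j^\sigma\} = \delta_{ij}$ and $\{R_i^\sigma, R_j^\sigma\} = 0$ secured during the action--angle construction, this forces $\omega$ to split as the direct sum of the reduced symplectic form on $F_C$ and the standard form $\sum_i dR_i^\sigma \wedge dI_i^\sigma$ on the fibers. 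Finally, since $R_i^\sigma \geq 0$, the pair $(R_i^\sigma, I_i^\sigma)$ is a coordinate on $\C_{\text{Re}\geq 0}$ via $z_i = R_i^\sigma + \sqrt{-1}\,I_i^\sigma$, which produces the diffeomorphism $\widetilde\Psi_C^\sigma$ onto $F_C \times \C^m_{\text{Re}\geq 0}$ of \eqref{eq:tilde-PsiC} and completes the proof.
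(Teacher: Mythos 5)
Your proposal is correct and follows essentially the same route as the paper, which presents this theorem explicitly as a summary of the preceding construction (the $\RR^m$-action of Proposition \ref{prop:PsiC}, Gotay's normal form with the equivariant splitting $\Gamma_C^\sigma$, and the action--angle construction producing the conjugate coordinates $R_i^\sigma$). Your added remark on why the equivariance of $G_C^\sigma$ rules out cross terms in Statement (5) makes explicit a point the paper leaves to the citation of the standard action--angle argument, but it is the same mechanism.
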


We also gather the following consequences of the above discussion separately. The first one,
in particular,  states that Proposition \ref{prop:equivalence-intro} still holds for
the Liouville $\sigma$-sectors with corners.

\begin{theorem}\label{thm:equivalence-sector}
\begin{enumerate}
\item Each Liouville $\sigma$-sector with corners is
a Liouville sector in the sense of Definition \ref{defn:sectorial-collection}.
\item The leaf space $\cN_{C_\delta}$ carries a natural structure of manifold with corners
at each sectorial corner $\delta$
such that the map $\pi_{C_\delta}: \del M \to \cN_{C_\delta}$ is a morphism of manifolds with corners.
\end{enumerate}
\end{theorem}
\begin{proof}
We have already constructed a diffeomorphism
$$
\Psi_\delta^\sigma : \del M|_{C_\delta} \to F_\delta^\sigma \times \RR^m
$$
given by
$$
\Psi_\delta^\sigma (x) = (\pi_{F_\delta^\sigma}(x), I_1^\sigma(x), \ldots, I_m^\sigma(x)).
$$
Each $I_i^\sigma$ defined on $\del M$ is extended to the function
$\widetilde I_i^\sigma \circ \Phi_{C_\delta}^\sigma$ on
a symplectic neighborhood $U_\delta: = \nbhd^Z(C_\delta) \subset M$
via Gotay's coisotropic neighborhood map
$$
\Phi_{C_\delta}^\sigma: \nbhd(C_\delta) \hookrightarrow T^*\cF_{C_\delta}
$$
where the function $\widetilde I_i^\sigma$ is canonically defined on a neighborhood
$$
V \subset E^* =T^*\cF_{C_\delta}.
$$
This diffeomorphism $\Phi_{C_\delta}$ onto $V_\delta \subset T^*\cF$ also induces a
splitting of the tangent bundle $TC_\delta$
$$
\Gamma_{C_\delta}^\sigma: \quad TC_\delta = G_\delta^\sigma \oplus T\cF_{C_\delta} = G_\delta^\sigma \oplus \cD_{C_\delta}
$$
such that $G_\delta^\sigma$ is a transverse symplectic subbundle of $TC_\delta$
$$
G_\delta^\sigma|_x: = d\Psi^{-1}\left(T_{\pi_{F_\delta^\sigma}(x)}F_\delta^\sigma \oplus \{0\}\right)
$$
at each $x \in C_\delta$. Theorem \ref{thm:splitting-data-corners} then finishes the construction of
the data laid out in Definition \ref{defn:sectorial-collection}.

For the proof of Statement (2), we start with the observation that for each $H = H_i$
the canonical smooth structure on $\cN_H$ carries the
natural structure of a manifold with boundary and corners through a choice of smooth section
made in Choice \ref{choice:sigma},
whose existence relies on the defining hypothesis of $\sigma$-sectorial hypersurfaces
that the projection map $\pi_H: H \to \cN_H$ admits a continuous section.
For each choice of smooth section,
by the same construction as in Subsection \ref{subsec:liouville-structure-leaf-space},
we have a symplectic structure $(\cN_H,\omega_{\cN_H})$, and
a smooth map $\sigma_\infty: \cN_H \to \del_\infty M$ which is a symplectic diffeomorphism
onto the convex hypersurface $F_\infty$ of the contact manifold $(\del_\infty M,\xi)$.
For two different choices of splittings, the resulting structures are diffeomorphic.

Finally it remains to verify the property of $\cN_C$ carrying the structure of
 the Liouville manifolds with corners.
But this immediately follows from the compatibility result, Proposition \ref{prop:sectorial-diagram}:
The moment map $\phi_{G,\delta}^\sigma: \nbhd^Z(C_\delta) \to \RR^m_+$ provides
local description of the codimension $k$-corner of $\cN_{C_\delta}$.
This finishes the proof.
\end{proof}

\section{Triviality of characteristic folitation implies convexity at infinity}
\label{sec:GPS-question}

As an application of our arguments used to derive the canonical splitting data,
we can now provide the affirmative answer to a
question raised by Ganatra-Pardon-Shende in \cite{gps}.

\begin{theorem}[Question 2.6 \cite{gps}]\label{thm:GPS-question}
Suppose $(M,\lambda)$ is a Liouville manifold-with-boundary that satisfies the following:
\begin{enumerate}
\item Its Liouville vector field $Z$ is tangent to $\del M$ near infinity.
\item There is a diffeomorphism $\del M = F \times \RR$ sending the characteristic foliation to the foliation by
leaves $\RR \times \{p\}$.
\end{enumerate}
Then $\del_\infty M \cap \del M$ is convex in $\del_\infty M$, and hence $M$ is a Liouville sector,
\end{theorem}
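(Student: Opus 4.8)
The plan is to verify directly the three defining conditions of a Liouville $\sigma$-sector (Definition~\ref{defn:liouville-sector-intro}) for the boundary hypersurface $\del M$, i.e.\ that $\del M$ is a $\sigma$-sectorial hypersurface in the sense of Definition~\ref{defn:sectorial hypersurface}. Two of the three conditions are immediate from the hypotheses: condition~(\ref{item. contact boundary}) (tangency of $Z$ to $\del M$ at infinity) is exactly hypothesis~(1), while condition~(\ref{item. foliation section}) follows from hypothesis~(2), since the product decomposition $\del M = F \times \RR$ carrying the characteristic foliation to the leaves $\RR \times \{p\}$ identifies the leaf space $\cN_{\del M}$ with $F$, so that $\pi\colon \del M \to \cN_{\del M}$ becomes the first projection $F \times \RR \to F$, which admits the smooth section $p \mapsto (p,0)$ and has fibers $\RR \times \{p\} \cong \RR$. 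Hence the entire content of the theorem reduces to condition~(\ref{item. convex corner}): that the ceiling corner $F_\infty = \del_\infty M \cap \del M$ is convex in the contact manifold $\del_\infty M$, which amounts to producing a contact vector field $\eta$ on $\del_\infty M$ everywhere transverse to $F_\infty$.

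First I would set up the two normal forms governing the geometry near $F_\infty$. Because $Z$ is tangent to $\del M$ at infinity, a neighborhood of $\del_\infty M$ is a half-symplectization $S_0 \times [0,\infty)$ with $Z = \del/\del s$ and $\lambda = e^s \pi^*\theta$, in which $\del M$ is $Z$-invariant and hence of the form $(S_0 \cap \del M)\times[0,\infty)$, with $S_0 \cap \del M$ identified with $F_\infty$. On the other hand, $\del M$ is coisotropic in $(M,d\lambda)$ with null foliation equal to its characteristic foliation $\cD$, and the splitting of hypothesis~(2) trivializes this foliation together with its leaf space; so I would apply Gotay's normal form theorem (Theorem~\ref{thm:normal-form}) to identify $\nbhd(\del M)$ with a neighborhood of the zero section of $T^*\cF_{\del M}$ carrying the form $\pi^*\omega_F - d\theta_\Gamma$, which in the splitting coordinates reads $\omega_F + dt \wedge dR$ with $\del M = \{R=0\}$ and $t$ the leaf (characteristic) coordinate.

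The crux, and the step I expect to absorb essentially all the work, is to manufacture a single defining function $I$ near infinity satisfying $Z[I] = \alpha I$ for some $\alpha > 0$ together with $dI|_{\cD} > 0$ — the function appearing in the first clause of Definition~\ref{defn. sectorial hypersurface-gps}. The difficulty is precisely the clash of the two flows flagged above: the leaf coordinate $t$ increases along $\cD$ (so $dt|_{\cD} > 0$) but is not $Z$-homogeneous, whereas $Z$ is homogeneous but, being tangent to $\del M$, degenerates in the transverse direction. The reconciliation rests on two points. Since $L_Z \omega = \omega$, the field $Z$ preserves $\cD = \ker \omega_\del$ near infinity, so $Z|_{\del M}$ carries leaves to leaves and acts on each leaf as a radial Euler-type field pushing toward its two ends — exactly as $p\,\del/\del p$ does on each fiber $\RR_p$ in the model $T^*[0,1]$ — which lets me build $I = \pm e^{\alpha s} g^{\pm}$ on the two ends $\nbhd(F_\infty^{\pm})$ from positive cross-sectional functions $g^{\pm}$ on $F_\infty^{\pm}$. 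The decisive point is that the triviality of the characteristic foliation supplied by hypothesis~(2) forces every leaf to be a copy of $\RR$ with exactly one end on $F_\infty^+$ and one on $F_\infty^-$, with no closed or recurrent leaves; this is what permits $dI|_{\cD}$ to be kept strictly positive all along each leaf (a closed characteristic would force $\oint dI|_{\cD} = 0$, a contradiction) and what lets the two end-constructions be glued by a partition of unity into a globally defined homogeneous $I$ with $dI|_{\cD} > 0$ on a neighborhood of infinity.

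Finally, having produced such an $I$, I would extract the contact vector field. By $Z[I] = \alpha I$ the function $I$ descends to a contact Hamiltonian on $\del_\infty M$, and its contact vector field $\eta$ — computed through the symplectization splitting $T(SC) = \xi_C \oplus \RR\langle R_\theta\rangle \oplus \RR\langle Z\rangle$ — is transverse to $F_\infty$ precisely because $dI|_{\cD} > 0$ (so the Hamiltonian vector field $X_I$ is transverse to $\del M$, since $dI$ does not vanish on $\cD = (T\del M)^{\omega}$) and, at infinity, $\cD$ projects onto the characteristic direction of $F_\infty$ in $\del_\infty M$. This transversality is the required convexity, so all three conditions of Definition~\ref{defn:liouville-sector-intro} hold and $M$ is a Liouville $\sigma$-sector; by Proposition~\ref{prop:equivalence-intro} it is then also a Liouville sector in the sense of \cite{gps}. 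The main obstacle remains the construction and global gluing of $I$ in the third paragraph, where triviality of the foliation must be played against the non-homogeneity of the leaf coordinate.
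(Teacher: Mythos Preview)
Your overall plan and the use of Gotay's theorem match the paper; the gap is the assertion in your third paragraph that ``$Z|_{\del M}$ acts on each leaf as a radial Euler-type field pushing toward its two ends.'' From $\cL_Z\omega = \omega$ you only get that the flow of $Z$ permutes leaves; $Z$ is not tangent to individual leaves. In the Gotay coordinates one has $Z = X_F + a\,\frac{\del}{\del R} + b\,\frac{\del}{\del I}$, and the question is whether the leaf-component $b$ is nonzero for large $|I|$ --- equivalently, whether $\cD$ is transverse to the slices $s^{-1}(N)$ near $F_\infty$. Your construction $I^\pm = \pm e^{\alpha s}g^\pm$ with $dI^\pm|_{\cD} > 0$ presupposes exactly this transversality: if $\cD$ were tangent to the $s$-slices then $dI^\pm|_{\cD}$ would reduce to $e^{\alpha s}\,dg^\pm|_{\cD}$, and you would be asking $g^\pm$ to be monotone along the characteristic foliation of $F_\infty$ inside $\del_\infty M$, which is circular. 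Nothing you wrote rules out $b = 0$.

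The paper supplies precisely this missing estimate. Writing $\lambda = \pi_F^*\lambda_F + I\,dR + dh$ on the Gotay neighborhood, it uses $\frac{\del}{\del I} \in \ker\omega_\del$ together with $\lambda\!\left(\frac{\del}{\del I}\right)=0$ on $\{R=0\}$ (a consequence of $Z$ being tangent to $\del M$) to deduce $\frac{\del h}{\del I}\big|_{R=0} = 0$; hence $h|_{R=0}$ is $I$-independent and $C^1$-bounded. Feeding this into $Z\rfloor d\lambda = \lambda$ yields $b = -\bigl(I + \frac{\del h}{\del R}\bigr)$ with the error term bounded, so $b\neq 0$ once $|I|$ is large. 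With that in hand the paper never builds a homogeneous defining function at all: since $Z[I] = b\neq 0$, the level set $I^{-1}(N)$ is of contact type for large $N$, and $X_I = \frac{\del}{\del R}$ restricts to a contact vector field there transverse to $\del M\cap I^{-1}(N)$. This also sidesteps your partition-of-unity gluing, which would in any case destroy $Z[I]=\alpha I$ unless the cutoffs were themselves $Z$-invariant.
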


The proof will be divided into three parts: we first examine the presymplectic geometry
component of the proof, and then explain how the triviality of characteristic foliation
simplifies Gotay's normal form theorem and finally
 combine the discussions with that of the Liouville geometry.

\subsection{Presymplectic geometry of $\del M$}

Denote by  $\iota_{\del M}: \del M \to M$ the inclusion map. Then
the one-form $\lambda_\del: = \iota_{\del M}^*\lambda$
induces the structure of presymplectic manifold
$$
(\del M, d\lambda_\del).
$$
By definition, $\cD_{\del M} = \ker d\lambda_\del$. Denote by $\Psi: \del M \to F \times \R$
the diffeomorphism entering in Condition (2) of the hypothesis. We denote by
\be\label{eq:projections}
 \pi_F: F \times \R \to F, \quad v: F \times \R \to \R
\ee
the natural projections to $F$ and $\R$ respectively.

Then the hypothesis implies that we have
a commutative diagram
\be\label{eq:hypothesis-diagram}
\xymatrix{\del M \ar[d]^{\pi_{\del M}} \ar[r]^\Psi & F \times \R \ar[d]^{\pi_F} \\
\cN_{\del M} \ar[r]^{\psi} & F
}
\ee
where $\psi:= [\Psi]: \cN_{\del M} \to F$ the obvious quotient map, which becomes a
diffeomorphism. In particular, Condition (2) implies that the foliation is a fibration
and the induced smooth structure $\cN_{\del M}$ from the presymplectic
structure is nothing but the pull-back of that of $F$. Furthermore we can take  the pull-back
\be\label{eq:leaf-generatingX} 
X = \Psi_*\left(\frac{\del}{\del v}\right)
\ee
as the leaf generating vector field.
Obviously the map $\sigma: \cN_{\del M} \to \del M$ defined by
\be\label{eq:section-sigma}
\sigma(\ell): = \Psi^{-1}(\psi(\ell),0)
\ee
defines a continuous section of $\pi_{\del M}: \del M \to \cN_{\del M}$, 
one of the defining data of Liouville $\sigma$-sectors. This section is in fact already smooth
with respect to the aforementioned smooth structure equipped with $\cN_{\del M}$.

Next, by Condition (1), we have
$$
\del_\infty M \cap \del M = \del_\infty(\del M).
$$
Therefore it remains to show convexity of $\del_\infty M \cap \del M$ in $\del_\infty M$, i.e., that there exists
a contact vector field defined on $\nbhd(\del_\infty M \cap \del M) \subset \del_\infty M$
that is transverse to the hypersurface $\del(\del_\infty M)$.
We denote the reduced symplectic form on $\cN_{\del M}$ of the presymplectic form 
$d\lambda_{\cN_{\del M}}$ by $\omega_{\cN_{\del M}}$

Next we prove

\begin{lemma}\label{lem:X} Suppose that $Z$ is tangent to $\del M$ outside a compact subset 
$K \subset  \del M$. Consider the pull-back $\lambda_\del:= \iota_{\del M}^*\lambda$ whose differential
$d\lambda_\del$ is  a presymplectic form on $\del M$.
Let $X$ be a vector field  tangent to $\ker \iota_{\del M}^*\lambda = \cD_{\del M}$
on $\del M \setminus K$. Then we have
$\cL_X \lambda_\del = 0$ thereon.
\end{lemma}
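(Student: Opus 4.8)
The plan is to reduce the statement to a pointwise vanishing via Cartan's formula and then to exploit the tangency of $Z$ in order to identify $\lambda_\del$ with a contraction of $\omega_\del$ by $Z$. First I would write, by Cartan's magic formula,
$$
\cL_X \lambda_\del = \iota_X\, d\lambda_\del + d(\iota_X \lambda_\del) = \iota_X \omega_\del + d(\lambda_\del(X)),
$$
using $d\lambda_\del = \omega_\del$. Since by hypothesis $X$ is tangent to $\cD_{\del M} = \ker \omega_\del$, the term $\iota_X \omega_\del$ vanishes identically on $\del M$. Thus the claim is equivalent to showing that the function $\lambda_\del(X)$ has vanishing differential on $\del M \cap (M \setminus K)$.

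The key step is to prove that in fact $\lambda_\del(X) \equiv 0$ on $\del M \cap (M\setminus K)$, which of course forces its differential to vanish there. For this I would use the hypothesis that $Z$ is tangent to $\del M$ on $M \setminus K$: where this tangency holds, $Z$ restricts to a genuine vector field on $\del M$, and I claim that
$$
\iota_Z \omega_\del = \lambda_\del \qquad \text{on } \del M \cap (M \setminus K).
$$
Indeed, for $v \in T_x \del M$ with $x \in \del M \cap (M\setminus K)$, the tangency gives $Z|_x \in T_x \del M$, so $\omega_\del(Z, v) = \omega(Z, v) = \lambda(v) = \lambda_\del(v)$, where the middle equality is the defining property $\iota_Z \omega = \lambda$ of the Liouville vector field and the outer equalities are merely the definitions of $\omega_\del$ and $\lambda_\del$ as pullbacks under $\iota_{\del M}$. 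Granting this identity and using that $X$ is a null vector for $\omega_\del$, I would compute
$$
\lambda_\del(X) = (\iota_Z \omega_\del)(X) = \omega_\del(Z, X) = -\omega_\del(X, Z) = 0
$$
on $\del M \cap (M\setminus K)$. Substituting back gives $\cL_X \lambda_\del = d(\lambda_\del(X)) = 0$ there, as desired.

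The main obstacle — really the only subtle point — is the identity $\iota_Z\omega_\del = \lambda_\del$, and in particular the fact that it holds precisely on the locus where $Z$ is tangent to $\del M$. The tangency is essential, since otherwise $\iota_Z\omega_\del$ is not even defined as the contraction of a vector field living on $\del M$; this is exactly why the conclusion is asserted only over $\del M \cap (M\setminus K)$ rather than over all of $\del M$. I would take care to emphasize that the argument is purely pointwise and local on the open set $\del M \cap (M\setminus K)$, so that the vanishing of $\lambda_\del(X)$ on this open set immediately yields the vanishing of its differential there, with no global input. In particular, no completeness or compactness of $M$ is needed beyond the stated tangency of $Z$ outside the compact set $K$.
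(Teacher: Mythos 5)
Your proof is correct and follows essentially the same route as the paper's: Cartan's formula, the vanishing of $\iota_X\omega_\del$ from $X\in\ker\omega_\del$, and the identity $\lambda_\del(X)=\omega_\del(Z,X)=0$ on the tangency locus coming from $\iota_Z d\lambda=\lambda$. Your write-up is in fact slightly more careful than the paper's in spelling out why the tangency of $Z$ is needed and why pointwise vanishing of $\lambda_\del(X)$ on the open set $\del M\cap(M\setminus K)$ kills its differential there.
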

\begin{proof}
Since $X$ spans the characteristic distribution of $(\del M,\lambda_\del)$, we have
$$
X \rfloor d\lambda_\del = 0
$$
on $\del M$. On the other hand, since $Z$ is tangent to $\del M\setminus K$
and $X \in \ker (\omega_\del = d\lambda_\del)$, we also have
\be\label{eq:lambdadelX=0}
\lambda_\del(X)= d\lambda_\del(Z,X) = 0
\ee
where the first equality follows by definition of Liouville vector field $Z$. Therefore on
$\del M \setminus K$, we compute
$$
\cL_X \lambda_\del = (d(X \rfloor \lambda) + X \rfloor d\lambda)|_{\del M} = 0
$$
which finishes the proof.
\end{proof}

We push-forward the one-form $\lambda_\del$ on $\del M$ to
$F \times \R$ by $\Psi$, and write
$$
\lambda^{\text{\rm pre}}: = \Psi_*(\lambda_\del)
$$
on $F \times \RR$. As we mentioned in \eqref{eq:leaf-generatingX}, 
we have $\Psi_*X = \frac{\del}{\del v}$.
We have the natural one-form $\lambda_{\cN_{\del M}}$ on $\cN_{\del M}$ 
 induced from $\lambda_{\del}$
characterized by the equation
\be\label{eq:lambda=lambdaF+df}
\lambda_\del = \pi_{\del M}^*\lambda_{\cN_{\del M}}
\ee
which holds at infinity.

\begin{lemma}\label{lem:constant-C} 
Suppose that $Z$ is tangent to $\del M$ on $\del M \setminus K$ for a compact 
subset $K \subset \del M$.
Then there exists a sufficiently large constant $C = C(K) > 0$ such that
$\cL_{\frac{\del}{\del t}} \lambda^{\text{\rm pre}} = 0$
on $F \times \{\log |v| \geq C\}$ and so
$$
\lambda^{\text{\rm pre}} = \pi_F^*\lambda_F
$$
for some one-form $\lambda_F$ on $F\times \{\log |v| \geq C\}$, where $\pi_F: F \times \RR \to F$ is the projection.
In particular we have $\frac{\del}{\del t} \rfloor \lambda^{\text{\rm pre}} = 0$ thereon.
\end{lemma}
\begin{proof}
Since $K$ is compact and $\Psi$ is continous, $\Psi(K)$ is compact and so
there exists a sufficiently large $C' > 1$ such that
$$
\Psi(K) \subset F \times (-C',C').
$$
In particular, we have
$$
F \times \{ \log |v| \geq C\} \subset \Psi(\del M \setminus K)
$$
for a sufficiently large $C := \log C' > 0$.  Then Lemma \ref{lem:X} applied to $F \times \{ \log |v| \geq C\}$
implies
$$
\cL_{\frac{\del}{\del v}} \lambda^{\text{\rm pre}} = 0,
$$
 i.e., $ \lambda^{\text{\rm pre}}$ is $\frac{\del}{\del v}$-invariant and hence
there exists a one-form $\lambda_F$ on $F$ such that $\pi_F^*\lambda_F = \lambda^{\text{\rm pre}}$
on $F \times \{ \log |v| \geq C\}$. The second statement follows from this or directly follows from \eqref{eq:lambdadelX=0}. This finishes the proof.
\end{proof}

For the simplicity of notation, we write 
$$
Y: = F \times \RR
$$
for the resulting presymplectic manifold $(Y,  d\lambda^{\text{\rm pre}})$.
In this case, we have natural identification $\cN_Y = F$, and
the reduced symplectic form on $\cN_Y = F$ is determined by the equation
$$
\psi_*(\omega_{\cN_{\del M}}) = d\lambda_F: = \omega_F
$$
where $\lambda_F$ depends on the behavior of Liouville vector field 
against the characteristic foliation of $Y$ near infinity of $Y$.

\subsection{The standing hypothesis of tameness of $M$}

Recalling that the basic geometric 
assumption put on $M$ is that $M$ is \emph{tame} or of \emph{bounded geometry}. 
We refer to \cite{sikorav:tame} for the standard definition thereof, and
\cite{choi-oh:quasiisometry} for the precise definition of tameness of $J$ relative to $\omega$,
which requires to assume $C^3$-tameness to ensure contractibility of 
tame almost complex structures $J$ on general noncompact symplectic manifolds.
(For the purpose of the present paper, $C^2$-tameness might be enough but we will
assume $C^3$-tameness for extra safety.)
This notion of tameness of a noncompact sympletic manifold $(M,\omega)$ is largely 
about the Riemanninan geometric behavior of the associated tame metrics of the form
$g$ that is quasi-isometric to those of the form $\omega(\cdot, J \cdot)$. This is needed
mainly for the geometric analysis of pseudoholomorphic curves on noncompact manifolds.

More precisely,  we assume that there is a cylindrical metric $g$ in  
$$
\nbhd (\del_\infty M) := [N, \infty) \times \del_\infty M
$$
for some sufficiently large $N> 0$,  so that
\be\label{eq:gcyl}
g_{\text{\rm cyl}}= g_{\del_\infty M} + ds^2, \quad Z = \frac{\del}{\del s}
\ee
and $\omega = g(J\cdot, \cdot)$ with almost complex structure tame to $\omega$
and $g$ has bounded curvature and injectivity radius $\delta = \delta_g > 0$. 
We also assume $g_{\del_\infty M}$ is also $C^3$ tame as a Riemannian manifold 
$(\del_\infty M, g_{\del_\infty M})$ if  $\del_\infty M$   itself  is noncompact.

In terms of the metric $g_{\text{\rm cyl}}$, $Z$ is parallel with respect to
the Levi-Civita connection of $g_{\text{\rm cyl}}$ and $|Z| =1$ 
on $\nbhd(\del_\infty M)$.
Furthermore we have $\omega = d\lambda$, and
by the definition of Liouville one-form, it is uniquely determined by the equation
$$
\lambda = Z \rfloor \omega = \frac{\del}{\del s} \rfloor \omega
$$
from the given exact $\omega$ and $Z$.  It also shows that $|\lambda| \equiv 1$ and  
\be\label{eq:C1-norm-lambda}
\|\nabla \lambda\|_{C^2}, \,\,  \|\nabla J\|_{C^2} \leq C_1
\ee
for some constant $C_1 > 0$ by the assumption of $C^3$-tameness of $M$.

Condition Theorem \ref{thm:GPS-question} (1) and the $C^3$-tameness hypothesis give rise to following

\begin{lemma}\label{lem:collar} Suppose $Z$ is tangent to $\del M$ on $\{s \geq N\}$. Then 
there exists a collar neighborhood $U = \nbhd(\del M)$
of $\del M \subset M$, a sufficiently large constant $C' > 0$ and a proper embedding 
$$
\Phi : F \times (-\delta, 0 ] \times \{ |v| \geq C'\} \to U
$$
 such that
\begin{enumerate}
\item $\Image \Phi \subset U$, and $\Phi (F \times \{0\} \times \{  |v| \geq C'\} )\subset \del M \cap \{s \geq N\}$,
\item  $\Phi$ is  a $C^2$ quasi-isometry onto its image. More precisely, we have
 $$
 \|\Phi\|_{C^2},  \|\Phi^{-1}\|_{C^2} < C_2.
 $$
 \end{enumerate}
\end{lemma}
\begin{proof} Using the fact that $F$ is a Liouville manifold, we decompose 
$$
F = F_0 \cup (F \setminus F_0)
$$ 
so that $F_0$ is compact and that
\be\label{eq:F-decompose}
F \setminus F_0 \cong [0, \infty) \times \del_\infty F, \quad \del_\infty F = \del_\infty M \cap \del M.
\ee
Since $Z$ is tangent to $\del M$ at infinity, we may choose $F_0$ sufficiently large and re-choose $s$ so that 
it satisfies
\be\label{eq:s-at-infty}
s = \log |v|
\ee
near $\del M\cap \del_\infty M$, more specifically on 
$$
\left\{x = (y,u,v) \in M \mid  s(x) \geq N, \, y \in  F \setminus F_0, \, u \in (-\delta , 0], \,  |v| \geq e^N \right\} 
\supset \del_\infty M \cap \del M
$$
and that $Z = \frac{\del}{\del s}$ is a Killing field of $g_{\text{\rm cyl}}$ of \eqref{eq:gcyl}
for a sufficiently large $N > 0$ and sufficiently small $\delta > 0$.

 Then it follows from this adjustment and 
compactness of $F_0$ that we can choose a sufficiently large $C' > 0$ so that
$$
\phi_Z^a \left(\{|v| \geq C'\}, \{|v| \geq C'\} \cap \del M \right) 
\subset \left(\{s \geq N\}, \{s \geq N\} \cap  N_{-\delta < u \leq 0}(\del M)\right) 
$$
for all $a \geq 0$. (In fact, we may choose $C' = e^N + C''$ for some constant $C''> 0$ depending only on $F_0$ and 
$Z$.)

On $\{|v| > C'\}$, we put a metric on 
\be\label{eq:product-metric}
g_F + du^2 + ds^2, \quad s : =  \log|v|, \quad |v| > 1
\ee
which is isometric to \eqref{eq:gcyl} on $\nbhd_\delta(\del M) \cap \{|v| \geq C'\}$.
Now we denote by $X$ the normalized positive leaf generating vector field such that
$X$ with $\|X\|_g = 1$ which is a multiple of $\frac{\del}{\del s}$. In the aforementioned coordinates 
$(u,v) = (u, \pm \log s)$,
we may have $X = v \frac{\del}{\del v}$ everywhere on $\del M \cap \{|v| > C'\}$. 
Consider the inward unit normal
vector $\vec \nu$ which coincides with $- \frac{\del}{\del u}$ of $\del M \cap \{|v| > C'\}$. 
Then we define a map
\be\label{eq:Phi}
\Phi: F \times (-\delta, 0] \times   \{|v| > C'\}  \to M
\ee
given by
$$
\Phi(y,u,v): = \exp^{\del M}_{(y,v)} (u\vec \nu_{(y,v)}).
$$
Then $\Phi|_{F \times \{0\}  \times \{|v| > C'\}}$ is the restriction to $\del M$ 
of $\Phi$. Furthermore by definition of $\Phi$
there exists  some $\delta > 0$ and sufficietly large $C' > 0$ such that 
 \begin{itemize}
 \item the map
\be\label{eq:Phi-V}
\Phi: F \times  (-\delta, 0] \times \{|v| \geq C'\} \to \nbhd(\del_\infty M)
\ee
defines a proper embedding, and
\item its inverse   is an isometry on $(N_g^\delta(\del M) \cap \{|v| \geq C' \}, g)$ 
mapping to
$$
(F \times (-\delta, 0] \times  \{|v| > C'\}, g_F + du^2 + ds^2) 
$$
for some $\delta> 0$ and sufficiently large constant $C' > 0$ where $N_g^\delta(\del M)$ is the $\delta$-neighborhood of $\del M$ with respect to the metric \eqref{eq:product-metric}.
\end{itemize}
The map also extends  the inclusion map
$$
\del M\cap \del_\infty M \hookrightarrow  \del_\infty M.
$$
It follows from the $C^3$-tameness hypothesis, compactness of $F_0$ and
the above adjustment of the radial function $s$ that there exists constant $C_2 > 0$ such that
 $$
 \|\Phi\|_{C^2},  \|\Phi^{-1}\|_{C^2} < C_2
 $$
 i.e., $\Phi$ is a $C^2$ quasi-isometry.
\end{proof}

\subsection{Symplectic thickening of $F \times \R$}

Now let us assume $\del M = F \times \R$ and $F = \del_\infty M \cap \del M$.
We also recall that $F$ itself canonically becomes a Liouville manifold (without boundary).
(See Subsection \ref{subsec:liouville-structure-leaf-space}.)
Therefore $F$ is the interior of the ideal completion 
$$
W = F \sqcup \del_\infty F
$$
which is Liouville isomorphic to a Liouville manifold with cylindrical
in the sense of Giroux \cite{giroux}. (See  Appendix \ref{sec:giroux}.
In the point of view of Definition \ref{defn:giroux} $F = M$ and $W = \overline M$ therein.)

Now we will prove a refinement of Gotay's normal form theorem for the 
\emph{noncompact} presymplectic manifold $(Y, d\lambda^{\text{\rm pre}})$.
In fact, under the present circumstance $\del M \cong F \times \R$ and $C^3$-tameness of $M$, i.e., when
there is a presymplectic diffeomorphism $\Psi: \del M \to F \times \R$ as in the hypothesis
of Theorem \ref{thm:GPS-question}, the map $\Psi$
can be directly thickened to a map $\widetilde \Psi$ utilizing the presence of uniform collar neighborhood constructed  
by the embedding $\Phi$ given in \eqref{eq:Phi} so that $\widetilde \Psi|_{\del M} = \Psi$ 
$$
\omega_V|_{F \times \{0\} \times \R} = (\widetilde \Psi|_{\del M})^*d\lambda
$$
\emph{after re-choosing $\Psi$, if necessary.}

Recall, from the assumption, that there is a diffeomorphism
$\Psi: \del M \to F \times \R$ such that it maps sending the characteristic distribution of
$\del M$ to that of $F \times \R$ which is given by $\{y\} \times \R$.  Because we also regard 
the presymplectic manifold $Y= F \times \R$
as the boundary $\del M$ of tame symplectic manifold $M$, requiring  the $C^3$-tameness hypothesis
 is a natural  continuation of bounded geometry
so that we can arrange the leaf generating vector field of $\del M$
$$
X: = \Phi_*\frac{\del}{\del v}
$$
have its $C^1$-norm $\|X\|_{C^2}$ bounded. Such a requirement has been already
used in the proof of Lemma \ref{lem:collar} .
Then we choose the aforementioned diffeomorphism $\Psi$ to coincide with
\be\label{eq:Psi-final}
\Psi = ( \Phi|_{F \times \{0\} \times \{|v| > C'\}})^{-1}
\ee
on the image $\Phi(F \times \{0\} \times \{|v| > C'\})
\subset \nbhd(\del_\infty M)$ of  the map $\Phi$ given in \eqref{eq:Phi}.

\begin{prop}[Proposition \ref{prop:normal-form}]\label{prop:normal-form}
Let $u + \sqrt{-1} v$ be the standard coordinates of $\C$ satisfying $v = t\circ \text{\rm pr}$.
Put
$$
R = u \circ \pi_\C \circ \widetilde \Psi, \quad I = v \circ \pi_\C\circ \widetilde \Psi
$$
on $F \times \C$. 
Then there are neighborhoods $U$ of $\del M \cong F \times \R$ and $V = F \times (-\delta, 0] \times \R$ of 
$F \times \{0\} \times \R \subset F \times \C$
for some $\delta > 0$, and a deformation of $\Psi$, still denoted by $\Psi$,
which extends to a diffeomorphism pair 
$$
(\widetilde \Psi,\Psi): (U,\del M) \to (V, F \times \{0\} \times \R)
$$ 
satisfying
\be\label{eq:omegaV1}
\widetilde \Psi_*\lambda = \widetilde \pi_F^*\lambda_F  -I\,  dR, \quad 
\widetilde \Psi_*(Z) = Z_F \oplus I \frac{\del}{\del I}
\ee
on $\{I > C'\} \cap V'$ for a sufficiently large $C> 0$ where $Z_F$ is the Liouville vector field of
the Liouville manifold $F$.
In particular we have $F\cong \del M \cap \del_\infty M = \del(\del_\infty M)$, which is convex in $\del_\infty M$.
\end{prop} 
\begin{proof} We consider  the inclusion map
$$
(F\times \{|I |\geq C' \}, d\lambda^{\text{\rm pre}}) \hookrightarrow (F \times \C,
\widetilde \pi_F^*\omega_F + dR \wedge dI)
$$
is a coisotropic embedding of a presymplectic manifold $F \times \{|I| \geq C' \} \subset \del M$. 
We write
\be\label{eq:omegaV}
\omega_V: = \widetilde \pi_F^*\omega_F + dR \wedge dI, \quad 
\lambda_V : =  \widetilde \pi_F^*\lambda_F - I\,  dR
\ee
We pull-back the two-form $\omega = d\lambda$ by the map $\Phi$ and write
$$
\omega_V': =\Phi^*\omega =\Phi^*d\lambda.
$$
Then we have $\omega_V' = \omega_V$ on $F \times \{0\} \times \{|I| \geq C'\} 
\subset F \times R \times \R$ with
\be\label{eq:H-C}
V \cap \{s \geq N\} \supset F \times (-\delta, 0] \times \{|I| \geq C'\} =: H.
\ee
Since $\omega_F = d\lambda_F$, $(\Phi^{-1})_*d\lambda = d\lambda_V$, we have 
$$
d\lambda_V =  \omega_V = \widetilde \pi_F^*d\lambda_F + dR \wedge dI
$$
and hence
$$
d((\Phi^{-1})_*\lambda - \pi_F^*\lambda_F - I dR) = 0.
$$
Since the choice of $\sigma$ made above implies
$$
\pi_F^*\lambda_F =(\Phi^{-1})_*\lambda_\del = \iota_H^*\lambda_V
$$
we have $\iota_H^*(\lambda_V - \pi_F^*\lambda_F - IdR) = 0$ on $\{R=0\}$
recalling $V \hookrightarrow
F \times \C$ is a codimension zero embedding.
In particular the form $\lambda_V - \pi_F^*\lambda_F - I dR$ is
exact on any neighborhood $V$ of $\{R = 0\}$ which deformation retracts to $\{R=0\}$.
Therefore we can write
$$
(\Phi^{-1})_*\lambda - \lambda_V = dh_V
$$
on such a neighborhood $V$ for some smooth function $h_V: V \to \RR$, i.e.,
\be\label{eq:pre-lambdaV}
(\Phi^{-1})_*\lambda = \pi_F^*\lambda_F - I dR + dh_V
\ee
thereon. Since $(\Phi^{-1})_*\lambda = \lambda_V$ on $H$ and $\Phi^{-1}$ is a $C^2$ quasi-isometry,
there exists a constant $C_2 > 0$ such
\be\label{eq:C2}
\|dh_V\|_{C^0} < C_2.
\ee
Since $Z$ is assumed to be tangent to $H$ near infinity, we have
$$
\lambda(X)= d\lambda\left(Z,X\right) = 0.
$$
Obviously we also have $\lambda_V(\frac{\del}{\del I}) =  (\pi_F^*\lambda_F - I dR)(\frac{\del}{\del I}) = 0$.
Therefore we have derived
$$
\frac{\del h_V}{\del I}\Big|_{R=0} = 0
$$
by evaluating \eqref{eq:pre-lambdaV} against $\frac{\del}{\del I}$.

Under this circumstance, the following deformation lemma is 
a generalization of the one proved for the
Liouville manifolds in \cite{oh:sectorial} to the case of Liouville sectors.
For readers' convenience, we give a full proof in Appendix \ref{sec:stability-sectors}
where a more precise statement is also given.

\begin{lemma}[Theorem \ref{thm:stability-sectors}; Compare with Theorem 9.2 \cite{oh:sectorial}]
\label{lem:stability-sectors}
Consider the family
$$
\kappa \mapsto \lambda_\kappa = \lambda_v + \kappa\,  dh_V, \quad \kappa \in [0,1].
$$
Then there exists a diffeomorphism $\varphi_t$ such that 
$$
\varphi_t^*\lambda_t = \lambda_V
$$
with $\supp \varphi_t \subset \supp dh_V$. In particular $\varphi_t|_{\{R = 0\}} = \id$.
\end{lemma}

Now we define $\widetilde \Psi = \varphi_1 \circ (\Phi^{-1})$.
Then we have
$$
\widetilde \Psi_*\lambda = \widetilde \pi^*\lambda_F - R\, dI
$$
for a sufficiently large  constant $C', \, N > 0$.
We then consider the model Liouville sector $(V,\omega_V)$
on $V := F \times (-\delta, 0] \times \R \subset F \times \C$ that is given by 
\be\label{eq:lambdaV}
\lambda_V =  \widetilde \pi_F^*\lambda_F - R\, dI
\ee
and let $Z_V = Z_F + I\, \frac{\del}{\del I}$ be its associated Liouville vector field.
We will compare  these with the pair $\widetilde \Psi_*\lambda$ and $\widetilde \Psi_*Z$.

For this purpose, on $V \subset F \times \R \times \R$, we 
 decompose the vector field $\widetilde \Psi_*Z$  into
\be\label{eq:ZV}
\widetilde \Psi_*Z = X_F + a \frac{\del}{\del R} + b \frac{\del}{\del I}
\ee
for some coefficient functions $a = a(y,R,I), \, b = b(y,R,I)$ for $(y,R,I) \in F \times \C$
in terms of the splitting $TV = TF \oplus T\C$. We compute
$$
\widetilde \Psi_*Z  \rfloor d\lambda_V = X_F \rfloor \widetilde \pi_F^*\omega_F  + a\, dI - b\, dR.
$$
Substituting this and \eqref{eq:lambdaV} into the equation
\be\label{eq:Z-defining1} 
\widetilde \Psi^*Z \rfloor d\lambda_V = \lambda_V
\ee
we obtain
\be\label{eq:Z-defining2}
X_F  \rfloor \widetilde \pi_F^*\omega_F + a\, dI - b\, dR = \widetilde \pi_F^* \lambda_F - I\, dR
\ee
on $(F \times \{|I| \geq C\}) \cap \{s \geq N\} \supset \del_\infty M \cap \del M$.
Comparing the two sides, we have derived
$$
X_F = Z_F, \quad a = 0,  \quad b = I
$$
thereon. We summarize the above discussion into the following 

\begin{lemma} On $F \times \{|I| \geq C'\} \cap \{s \geq N\} \subset V$.
we have
$$
\widetilde \Psi_*Z = Z_F + I\, \frac{\del}{\del I}.
$$
\end{lemma}

In summary, we have constructed a map
$\widetilde \Psi: \nbhd(\del_\infty M \cap \del M) \to F \times \C$
which is a diffeomorphism onto its image, where $F \times \C$
is the Liouville sector equipped with the model structure given by
$$
\lambda_V = \widetilde \pi_F^*\lambda_F - I\, dR, \quad Z_V = Z_F +  I \frac{\del}{\del I}
$$
on $\nbhd(\del_\infty M \cap \del M)$ for which the convexity of $\del_\infty M \cap \del M \hookrightarrow \del_\infty M$ 
is verified by the contact vector field induced by the Hamiltonian vector field $X_I$.

This finishes the proof of Theorem \ref{thm:GPS-question}.
\end{proof}

\section{Structure of Liouville $\sigma$-sectors and their automorphism groups}
\label{sec:automorphism-group}

Our definition of Liouville $\sigma$-sectors with corners enables us to
give a natural notion of automorphisms which is the same as the
case without boundary. 

We first recall the following well-known definition of automorphisms of 
Liouville manifold (without boundary)

\begin{defn}\label{defn:geometric-structure} Let $(M,\lambda)$ be
an Liouville manifold without boundary. We call a diffeomorphism $\phi: M \to M$
a Liouville automorphism if $\phi$ satisfies
$$
\phi^*\lambda = \lambda + df
$$
for a compactly supported function $f: M \to \RR$.
We denote by $\aut(M)$ the set of automorphisms of $(M,\lambda)$.
\end{defn}

Now we would like
extend this definition of automorphisms to the case of Liouville $\sigma$-sectors.
The extension is not completely obvious because not every defining condition involving the 
the \emph{presymplectic geometry} is manifestly preserved  under the action of Liouville diffeomorphisms,
especially for the case of Liouville $\sigma$-sectors with corners. 
(In our opinion, the same applies to the original definition of Liouville sectors with corners from
\cite{gps2} in a different way.)

For this purpose, we need some preparations by examining the universal geometric structures
inherent on the boundary $\del M$ of a Liouville manifold with boundary and corners.

\subsection{Some presymplectic geometry of $\del M$}

We start with the observation that $(\del M, \omega_{\del M})$
carries the structure of \emph{presymplectic manifolds} as usual
for any coisotropic submanifold mentioned as before.
We first introduce automorphisms of presymplectic manifolds
$(Y,\omega)$ in general context.

\begin{defn}\label{defn:presymplectic-morphism} Let $(Y,\omega)$ and
$(Y^\prime, \omega^\prime)$ be two presymplectic manifolds. A diffeomorphism $\phi: Y\to Y'$ is called
{\it presymplectic} if $\phi^*\omega^\prime = \omega$.
We denote by ${\cP}Symp(Y,\omega)$ the set of presymplectic diffeomorphisms.
\end{defn}
(We refer to \cite{oh-park} for some detailed discussion on the geometry of presymplectic manifolds
and their automorphisms and their application to the deformation problem of coisotropic submanifolds.)

Then we note that any diffeomorphism $\phi: (M,\del M) \to (M,\del M)$ satisfying
\be\label{eq:liouville-diffeomorphism}
\phi^*\lambda = \lambda + df
\ee
for some function $f$, \emph{not necessarily compactly supported}, induces a presymplectic diffeomorphism
$$
\phi_\del: = \phi|_{\del M}
$$
on $\del M$ equipped with the presymplectic form
$$
\omega_{\del} := d\lambda_\del, \quad \lambda_\del := \iota^*\lambda
$$
for the inclusion map $\iota: \del M \to M$.
\begin{lemma}\label{lem:phi-preserving-cD} The presymplectic diffeomorphism $\phi_\del: \del M \to \del M$ preserves
the characteristic foliation of $\del M$.
\end{lemma}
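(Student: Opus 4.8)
The plan is to deduce the statement directly from the presymplectic invariance $\phi_\del^*\omega_\del = \omega_\del$ recorded just above, since the characteristic distribution $\cD_{\del M} = \ker \omega_\del$ is defined intrinsically from $\omega_\del$ alone. Concretely, it suffices to prove the pointwise assertion that the differential $d\phi_\del$ carries $\cD_{\del M}|_x$ isomorphically onto $\cD_{\del M}|_{\phi_\del(x)}$ for every $x \in \del M$; the statement about foliations then follows from integrability.

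For the pointwise step I would fix $x \in \del M$ and $v \in \cD_{\del M}|_x = \ker \omega_\del|_x$, so that $\omega_\del|_x(v,w) = 0$ for all $w \in T_x\del M$. Using $\phi_\del^*\omega_\del = \omega_\del$ one has
$$
\omega_\del|_{\phi_\del(x)}\bigl(d\phi_\del(v),\, d\phi_\del(w)\bigr) = \omega_\del|_x(v,w) = 0
\qquad \text{for all } w \in T_x\del M.
$$
Because $\phi_\del$ is a diffeomorphism of $\del M$, the map $d\phi_\del \colon T_x\del M \to T_{\phi_\del(x)}\del M$ is a linear isomorphism, so $d\phi_\del(w)$ ranges over all of $T_{\phi_\del(x)}\del M$ as $w$ ranges over $T_x\del M$. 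Hence $\omega_\del|_{\phi_\del(x)}\bigl(d\phi_\del(v),\,\cdot\,\bigr)$ vanishes identically, that is, $d\phi_\del(v) \in \ker \omega_\del|_{\phi_\del(x)} = \cD_{\del M}|_{\phi_\del(x)}$. Applying the same reasoning to $\phi_\del^{-1}$ yields the reverse inclusion, so $d\phi_\del$ restricts to an isomorphism $\cD_{\del M}|_x \to \cD_{\del M}|_{\phi_\del(x)}$.

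Finally, since $\cD_{\del M}$ is integrable with the characteristic foliation as its integral foliation, a diffeomorphism whose differential preserves the distribution carries connected integral submanifolds to connected integral submanifolds, hence leaves to leaves; thus $\phi_\del$ preserves the characteristic foliation. There is no serious obstacle here: the whole content is the observation that the kernel of a closed two-form is a diffeomorphism invariant, and the only point needing a word is the passage from the distribution to the foliation, which is immediate from integrability together with the fact that $\phi_\del$ is a diffeomorphism.
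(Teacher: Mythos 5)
Your proposal is correct and follows essentially the same route as the paper: the paper's proof likewise observes that $\cD_{\del M} = \ker\omega_\del$ together with $\phi_\del^*\omega_\del = \omega_\del$ immediately gives $\phi_*(\cD_{\del M}) = \cD_{\del M}$. You merely spell out the pointwise kernel computation and the passage from distribution to foliation, which the paper leaves implicit.
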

\begin{proof}
We have
$$
\cD_{\del M} = \ker \omega_\del.
$$
Since any Liouville automorphism $\phi$ of $(M, \del M)$ satisfies \eqref{eq:liouville-diffeomorphism},
we have
$$
\phi_\del^*\omega_\del = \omega_\del.
$$
Therefore we have
$$
\phi_*(\cD_{\del M}) = \cD_{\del M}
$$
which finishes the proof.
\end{proof}

In fact, for the current case of our interest $Y = \del M$, the presymplectic form $\omega_\del$
is exact in that
$$
\omega_\del = d\lambda_\del, \quad \lambda_\del: = \iota^*\lambda.
$$
Furthermore \eqref{eq:liouville-diffeomorphism} implies that $\phi$
actually restricts to an exact presymplectic diffeomorphism
$$
\phi_\del: (\del M, \omega_\del) \to (\del M,\omega_\del)
$$
on $\del M$ in that
$$
\phi_\del^*\lambda_\del -\lambda_\del = d h, \quad h = f\circ \iota
$$
where the function $h: \del X \to \RR$ is not necessarily compactly supported.

We have a natural restriction map
\be\label{eq:restriction-phi}
\aut(M,\lambda) \to {\cP} \text{\rm Symp}(\del M, \omega_\del); \quad \phi \mapsto \phi_\del.
\ee

\begin{defn}[Pre-Liouville automorphism group $\aut(\del M,\lambda_\del)$] We call
a diffeomorphism $\phi: (\del M,\lambda_\del) \to (\del M,\lambda_\del)$ a
\emph{pre-Liouville diffeomorphism} if the form $\phi^*\lambda_\del -\lambda_\del$ is exact.
We say $\phi$ is a \emph{pre-Liouville automorphism} if   it satisfies
$$
\phi^*\lambda_\del= \lambda_\del + dh
$$
for a compactly supported function $h: \del M \to \RR$. We denote by $\aut(\del M,\lambda_\del)$
the set of pre-Liouville automorphisms  of $(\del M,\lambda_\del)$.
\end{defn}

The following is an immediate consequence of the definition.

\begin{cor}\label{lem:phi-del} The restriction map \eqref{eq:restriction-phi} induces
a canonical group homomorphism
$$
\aut(M,\lambda) \to \aut(\del M,\lambda_\del).
$$
\end{cor}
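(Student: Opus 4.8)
The plan is to check two things: that the restriction map \eqref{eq:restriction-phi} actually takes values in the subgroup $\aut(\del M,\lambda_\del) \subset {\cP}\text{\rm Symp}(\del M,\omega_\del)$, and that it respects composition. Both are essentially contained in the discussion preceding the statement; the only genuinely new ingredient is that compact support is inherited under restriction to $\del M$.

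First I would establish well-definedness. Fix $\phi \in \aut(M,\lambda)$, so $\phi^*\lambda = \lambda + df$ for some \emph{compactly supported} $f: M \to \RR$. Since $\phi$ preserves $\del M$, the inclusion $\iota: \del M \to M$ satisfies $\phi\circ\iota = \iota\circ\phi_\del$, and pulling $\lambda_\del = \iota^*\lambda$ back through $\phi_\del$ gives, by naturality of pullback,
\[
\phi_\del^*\lambda_\del = (\iota\circ\phi_\del)^*\lambda = (\phi\circ\iota)^*\lambda = \iota^*\phi^*\lambda = \iota^*(\lambda + df) = \lambda_\del + d(f\circ\iota),
\]
which is exactly the identity recorded before the statement, with $h = f\circ\iota = f|_{\del M}$.

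The key point is then to observe that $h$ is compactly supported. As $\del M$ is closed in $M$ and $f$ has compact support, the set $\supp f \cap \del M$ is compact, and $\supp h$ is a closed subset of it, hence compact. Thus $\phi_\del^*\lambda_\del = \lambda_\del + dh$ with $h$ compactly supported, i.e. $\phi_\del \in \aut(\del M,\lambda_\del)$; so the image of \eqref{eq:restriction-phi} lies in $\aut(\del M,\lambda_\del)$ as claimed. This support-transfer step is the one place where the compact-support hypothesis defining $\aut(M,\lambda)$ (as opposed to the general Liouville-diffeomorphism condition of Lemma \ref{lem:phi-preserving-cD}) is genuinely used, and it is the only mild obstacle.

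Finally, the homomorphism property is pure functoriality of restriction to the $\phi$-invariant submanifold $\del M$: for $\phi,\psi \in \aut(M,\lambda)$ one has $(\phi\circ\psi)|_{\del M} = \phi|_{\del M}\circ\psi|_{\del M}$ and $\mathrm{id}_M|_{\del M} = \mathrm{id}_{\del M}$, so $\phi \mapsto \phi_\del$ preserves composition and carries the identity to the identity. I do not expect any serious difficulty beyond the support argument: the substantive geometric content---that $\phi_\del$ is an exact presymplectic diffeomorphism preserving the characteristic foliation---was already secured in Lemma \ref{lem:phi-preserving-cD} and the surrounding discussion, and this corollary merely upgrades ``exact'' to ``compactly supported exact.''
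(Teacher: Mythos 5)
Your proposal is correct and follows exactly the route the paper intends: the paper states the corollary as ``an immediate consequence of the definition,'' relying on the computation $\phi_\del^*\lambda_\del = \lambda_\del + d(f\circ\iota)$ already recorded in the surrounding discussion, and your only added content --- that $f|_{\del M}$ inherits compact support because $\del M$ is closed in $M$ --- together with the functoriality of restriction is precisely the omitted verification. Nothing further is needed.
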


We recall that $\del M$ carries a canonical transverse symplectic structure
arising from the presymplectic form $d\lambda_\del$. (See \cite[Sectioon 4]{oh-park}.)
\begin{prop}\label{prop:phi-intertwine}
The induced pre-Liouville automorphism
$\phi_\del:= \phi|_{\del M} : \del M \to \del M$ descends to a (stratawise) symplectic diffeomorphism
$$
\phi_{\cN_{\del M}}: \cN_{\del M} \to \cN_{\del M}
$$
and satisfies
$$
\pi_{\del M} \circ \phi_\del = \phi_{\cN_{\del M}} \circ \pi_{\del M}
$$
when we regard both $\del M$ and $\cN_{\del M}$ as manifolds with corners.
\end{prop}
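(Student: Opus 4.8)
The plan is to descend $\phi_\del$ through the leaf map $\pi_{\del M}$ and then verify, in turn, smoothness, the symplectic condition, and compatibility with the corner stratification. First I would record that by Lemma~\ref{lem:phi-preserving-cD} the restriction $\phi_\del = \phi|_{\del M}$ is an (exact) presymplectic diffeomorphism, so $\phi_\del^*\omega_\del = \omega_\del$ and, since $\cD_{\del M} = \ker\omega_\del$, the map $\phi_\del$ carries each characteristic leaf bijectively onto a characteristic leaf. Consequently the composite $\pi_{\del M}\circ\phi_\del\colon \del M \to \cN_{\del M}$ is constant along the fibers of $\pi_{\del M}$, which produces a unique set-theoretic map $\phi_{\cN_{\del M}}\colon \cN_{\del M}\to\cN_{\del M}$ satisfying $\pi_{\del M}\circ\phi_\del = \phi_{\cN_{\del M}}\circ\pi_{\del M}$. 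This is both the asserted intertwining relation and the definition of $\phi_{\cN_{\del M}}$.

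Next I would establish smoothness. Since $\pi_{\del M}$ is a surjective smooth submersion (Theorem~\ref{thm:equivalence}, part (1)), the universal property of submersions applies: a map out of $\cN_{\del M}$ is smooth exactly when its precomposition with $\pi_{\del M}$ is smooth. As $\pi_{\del M}\circ\phi_\del$ is smooth and factors as $\phi_{\cN_{\del M}}\circ\pi_{\del M}$, the map $\phi_{\cN_{\del M}}$ is smooth; concretely one may also write $\phi_{\cN_{\del M}} = \pi_{\del M}\circ\phi_\del\circ\sigma$ locally, for any smooth local section $\sigma$ of $\pi_{\del M}$ such as $\sigma_N^+$ of \eqref{eq:sN+}. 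Running the same argument with $\phi^{-1}$ in place of $\phi$ yields the two-sided inverse, so $\phi_{\cN_{\del M}}$ is a diffeomorphism.

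For the symplectic property I would invoke the defining relation \eqref{eq:reduced-form} of the reduced form, $\pi_{\del M}^*\omega_{\cN_{\del M}} = \omega_\del$. Pulling $\omega_{\cN_{\del M}}$ back along $\pi_{\del M}$ and using the intertwining relation, the computation would read
\[
\pi_{\del M}^*\bigl(\phi_{\cN_{\del M}}^*\omega_{\cN_{\del M}}\bigr)
= (\pi_{\del M}\circ\phi_\del)^*\omega_{\cN_{\del M}}
= \phi_\del^*\omega_\del
= \omega_\del
= \pi_{\del M}^*\omega_{\cN_{\del M}}.
\]
Because $\pi_{\del M}$ is a submersion, $\pi_{\del M}^*$ is injective on differential forms, whence $\phi_{\cN_{\del M}}^*\omega_{\cN_{\del M}} = \omega_{\cN_{\del M}}$.

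Finally, for the statement ``as manifolds with corners'' I would note that a diffeomorphism $\phi$ of $M$ preserving $\del M$ preserves the corner stratification and the dimensions of strata (Remark~\ref{rem:stratawise-presymplectic}); hence $\phi_\del$ permutes the sectorial corners and their coisotropic intersections compatibly, and the induced corner structure on $\cN_{\del M}$ (Theorem~\ref{thm:equivalence}) is respected, so the three steps above may be run stratum by stratum. The genuinely delicate point is the smoothness descent at the corner strata: one must know that the smooth-manifold-with-corners structure on $\cN_{\del M}$ really is the quotient structure for the submersion $\pi_{\del M}$, so that the universal property of submersions stays available there. This is precisely what the construction of Subsection~\ref{subsec:smooth-structure} and its corner analogue supply, so I expect the remaining verifications to be formal once that identification is in hand.
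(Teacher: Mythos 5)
Your proof is correct, and it takes the route the paper intends: the paper actually states Proposition \ref{prop:phi-intertwine} without giving a proof, treating it as a consequence of Lemma \ref{lem:phi-preserving-cD} together with the reduction construction of Sections \ref{subsec:smooth-structure}--\ref{subsec:symplectic-structure}, and your argument supplies exactly the missing chain: leaf preservation gives the set-theoretic descent and the intertwining identity, submersivity of $\pi_{\del M}$ gives smoothness of $\phi_{\cN_{\del M}}$ and of its inverse, and the defining relation $\pi_{\del M}^*\omega_{\cN_{\del M}} = \omega_\del$ plus injectivity of $\pi_{\del M}^*$ gives the symplectic property. One small remark on your closing caveat: the universal property you need is a general fact about surjective submersions (locally $\phi_{\cN_{\del M}} = (\pi_{\del M}\circ\phi_\del)\circ\sigma$ for a smooth local section $\sigma$), so once Theorem \ref{thm:equivalence}(1) establishes that $\pi_{\del M}$ is a smooth submersion stratum by stratum, no further identification of the quotient structure is required.
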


\subsection{Automorphism group of Liouville $\sigma$-sectors}

Now we are ready give the geometric structure of \emph{Liouville $\sigma$-sectors}.

\begin{defn}[Structure of Liouville $\sigma$-sectors]\label{defn:structure}
We say two Liouville $\sigma$-sectors $(M,\lambda)$ and $(M', \lambda')$ are
isomorphic, it there exists a diffeomorphism $\psi: M \to M'$ (as a manifold with corners) such that
$\psi^*\lambda' = \lambda + df$ for some compactly supported function $f: M \to \R$.
A \emph{structure of Liouville $\sigma$-sectors} is defined to be an isomorphism class of Liouville $\sigma$-sectors.
\end{defn}

With this definition of the structure of Liouville $\sigma$-sectors in our disposal,
the following is an easy consequence of the definition and Proposition \ref{prop:phi-intertwine},
which shows that the definition of an automorphism of a Liouville sector $(M,\lambda)$
is in the same form as the case of Liouville manifold given by
the defining equation
$$
\psi^*\lambda = \lambda +df
$$
for some compactly supported function $f: M \to \R$, except that $\psi$ is a self diffeomorphism
of $M$ as a stratified manifold and the equality of the above equation as in the sense of
Remark \ref{rem:stratawise-presymplectic}.

\begin{theorem}[Automorphism group]\label{thm:automorphism-group}
Let $(M,\lambda)$ be a Liouville $\sigma$-sector.
Suppose a diffeomorphism $\psi: M \to M$ satisfies
\be\label{eq:defining-psi}
\psi^*\lambda = \lambda + df
\ee
for some compactly supported function $f: M \to \R$. Then $\psi$ is an automorphism of the \emph{structure
of Liouville $\sigma$-sectors}.
\end{theorem}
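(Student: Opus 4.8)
The plan is to verify directly that $\psi$ meets the two requirements of a self-isomorphism in the sense of Definition~\ref{defn:structure}: that it is a diffeomorphism of $M$ \emph{as a manifold with corners} preserving the full $\sigma$-sectorial structure, and that $\psi^*\lambda = \lambda + df$ with $f$ compactly supported. The second requirement is the hypothesis, so the real content is the claim---worth isolating as a theorem---that the simple exactness condition $\psi^*\lambda = \lambda + df$ \emph{automatically} forces all of the intricate sectorial data (characteristic foliations, clean coisotropic collections, splitting data) to be respected, with no need to impose their preservation separately. First I would record that any diffeomorphism of $M$ preserves the stratification by depth, sending codimension-$k$ corner strata to codimension-$k$ corner strata; this is the standard fact recalled in Remark~\ref{rem:stratawise-presymplectic}. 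In particular $\psi(\del M) = \del M$ and $\psi$ permutes the codimension-one boundary faces.

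Next, since $\psi^*\lambda = \lambda + df$ yields $\psi^*(d\lambda) = d\lambda$, the map $\psi$ is a symplectomorphism of $(M, d\lambda)$; being a symplectomorphism, it preserves $\omega$-orthogonal complements and hence the coisotropy and null distributions of any submanifold. Restricting to the boundary, Lemma~\ref{lem:phi-preserving-cD} shows that $\psi_\del := \psi|_{\del M}$ is an exact presymplectic diffeomorphism of $(\del M, \omega_\del)$ and therefore preserves the characteristic foliation $\cD_{\del M}$; Proposition~\ref{prop:phi-intertwine} then descends this to a stratawise symplectic diffeomorphism $\psi_{\cN_{\del M}}$ of the leaf space intertwining the projection $\pi_{\del M}$. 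This settles the codimension-one stratum.

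The main work---and the step I expect to be the principal obstacle---is at the corners. Fix a sectorial corner $C_\delta = H_{\delta,1}\cap\cdots\cap H_{\delta,m}$ defined by a clean coisotropic collection. Since $\psi$ preserves depth, it carries $C_\delta$ diffeomorphically onto another codimension-$m$ stratum $\psi(C_\delta)$, and I would argue that the images $\{\psi(H_{\delta,i})\}$ again form a clean coisotropic collection with full intersection $\psi(C_\delta)$: clean intersection is a diffeomorphism invariant, while coisotropy of the pairwise intersections is preserved because $\psi$ is a symplectomorphism. Applying Lemma~\ref{lem:phi-preserving-cD} to each face $H_{\delta,i}$ and assembling the results via the decomposition $\cD_{C_\delta} = \sum_i \cD_{H_{\delta,i}}|_{C_\delta}$ from~\eqref{eq:cNC} together with the compatibility diagram~\eqref{eq:sectorial-diagram}, one sees that $\psi$ preserves the null foliation of $C_\delta$ and so intertwines $\pi_{C_\delta}$ with $\pi_{\psi(C_\delta)}$. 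Because $\psi|_{C_\delta}$ is then a homeomorphism carrying fibers to fibers, the contractibility-of-fibers condition of Definition~\ref{defn:intrinsic-corners} is transported to $\psi(C_\delta)$; hence each $\sigma$-sectorial corner is sent to a $\sigma$-sectorial corner and the entire structure is respected.

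Combining these observations, $\psi$ is a diffeomorphism of manifolds with corners preserving the structure of a Liouville $\sigma$-sector, and it satisfies $\psi^*\lambda = \lambda + df$ with $f$ compactly supported; by Definition~\ref{defn:structure} this is precisely a self-isomorphism, i.e.\ an automorphism of the structure, which completes the argument. The delicate point to watch is the corner step: one must be sure that preservation of the \emph{intrinsic} null-foliation data face by face assembles compatibly into preservation of the null foliation of the full intersection $C_\delta$, and it is exactly the decomposition~\eqref{eq:cNC} and the compatibility furnished by Proposition~\ref{prop:sectorial-diagram} that guarantee this.
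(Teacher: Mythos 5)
Your overall strategy---checking that $\psi$ transports each piece of the defining data of Definition \ref{defn:intrinsic-corners}---is the same as the paper's, and your treatment of the presymplectic part (preservation of strata, of the characteristic foliations via Lemma \ref{lem:phi-preserving-cD}, of the clean coisotropic collections, and of the fibers of $\pi_{C_\delta}$) matches what the paper does. But there is a genuine gap: nowhere in your argument do you actually use the compact support of $f$, and consequently you never verify conditions (1) and (2) of Definition \ref{defn:sectorial hypersurface} for the pushed-forward faces $\psi(H_{\delta,i})$, namely that the Liouville vector field $Z$ of $\lambda$ is tangent to $\psi(H_{\delta,i})$ at infinity and that $\del_\infty M \cap \psi(H_{\delta,i})$ is convex in $\del_\infty M$. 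These are not consequences of $\psi$ being a symplectomorphism preserving null foliations: as Remark \ref{rem:presymplectic} stresses, condition (3) is presymplectic while (1) and (2) live in the Liouville geometry at infinity. An exact symplectomorphism with $f$ \emph{not} compactly supported satisfies every step you wrote down yet need not preserve $Z$ at infinity nor induce any map of $\del_\infty M$ at all, so your argument as written proves too much. The fix is to observe that compact support gives $\psi^*\lambda = \lambda$, hence $\psi_*Z = Z$, outside a compact set, so that $\psi$ induces a contactomorphism of $(\del_\infty M,\xi_\infty)$ carrying $H_{i,\infty}$ to $\psi(H_i)_\infty$ and preserving convexity; this step must be made explicit.

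Relatedly, you do not produce the continuous sections required by condition (3) for the new hypersurfaces. This is where the paper concentrates its effort: it chooses the original sections with $\Image\sigma_i \subset M \setminus \supp df$ (possible precisely because $f$ has compact support, e.g.\ taking $\sigma_i = \sigma_N^+$ for $N$ large), shows $\psi$ descends to $[\psi]$ on the leaf spaces, and defines $\sigma_i^\psi := \psi\circ\sigma_i\circ[\psi]^{-1}$. Given your foliation-preservation step the existence of \emph{some} section is admittedly easy to transport, but the paper's choice of $\Image\sigma_i$ away from $\supp df$ is what guarantees the pushed-forward data induce the same reduced Liouville structures, and it is the second place where the compact-support hypothesis does real work. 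Incorporating these two points would close the gap.
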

\begin{proof}
We first discuss how the action of diffeomorphisms $\psi$ satisfying
$
\psi^*\lambda = \lambda + df
$
affects the structure of Liouville $\sigma$-sectors, when the function $f$ is compactly supported.
In particular it implies
\begin{itemize}
\item $\psi^*d\lambda = d\lambda$,
\item $\psi^*\lambda = \lambda$ near infinity.
\end{itemize}
Then $\psi$ restricts to a presymplectic diffeomorphism $\psi_\del: \del M \to \del M$
which is also pre-Liouville, i.e., satisfies
$$
(\psi|_{\del M})^*\lambda_\del = \lambda_\del + dh
$$
for a \emph{compactly supported} function $h$ on $\del M$.

We need to show that the \emph{structure} of Liouville $\sigma$-sectors with respect to
$$
(M,\psi^*\lambda) = (M,\lambda + df)
$$
is isomorphic to that of $(M,\lambda)$.
For this, we make a choice of $\sigma = \{\sigma_1, \cdots, \sigma_m\}$
associated to a transverse coisotropic collection $\{H_1, \dots, H_m\}$
 for each sectorial corner $\delta$ of
$M$ with
$$
C_\delta = H_1 \cap \cdots \cap H_m.
$$
Such a collection exists by definition for $(M,\lambda)$ being a Liouville $\sigma$-sector.

Now we consider the pushforward collection of hypersurfaces
$$
\{H_1', \cdots, H_m'\} := \{\psi(H_1), \ldots, \psi(H_m)\}.
$$
Since smooth diffeomorphisms between two manifolds with corners
preserve strata dimensions by definition, we work with
the defining data of $(M, \psi^*\lambda)$ stratawise of the fixed dimensional strata.

We first need to show that each $H_i'$ is $\sigma$-sectorial hypersurface by finding
a collection
$$
\sigma' = \{\sigma_1',\ldots, \sigma_m\}
$$
where each $\sigma_i'$ is a smooth section o $H_i'$ respectively.
For this purpose, we prove the following

\begin{lemma} Choose the sections $\sigma_i$s so that
$$
\Image \sigma_i \subset M \setminus \supp df.
$$
Then there exists a neighborhood $\nbhd(\del_\infty M)$ such that the following hold:
\begin{enumerate}
\item The map $\psi: \nbhd(\del_\infty M) \cap H_i \to H_i$ descends to a
diffeomorphism $[\psi]: \cN_{H_i} \to \cN_{H_i}$.
\item The map $\sigma_i^\psi: \cN_{H_i} \to \psi(H_i)$ defined by
$$
\sigma_i^\psi: = \psi \circ \sigma_i \circ [\psi]^{-1}
$$
is a section of the projection $\psi(H_i) \to \cN_{\psi(H_i)} = \cN_{H_i}$.
\end{enumerate}
\end{lemma}
\begin{proof} Since $\Image \sigma_i \subset M \setminus \supp df$, we have
$$
\psi^*\lambda = \lambda
$$
on $\Image \sigma_i:=F_i$. In particular, the projection $\pi_{H_i}:H_i \to \cN_{H_i}$
restricts to a bijective map on $F_i$. Furthermore since $\psi^*\lambda = \lambda$
on $\nbhd(\del_\infty M)$, the associated Liouville vector field $Z_\lambda$ of $\lambda$ satisfies
$$
\psi_*Z_\lambda = Z_\lambda
$$
thereon. Recall that $\psi$ restricts to a diffeomorphism on $\del M$ (as a map on manifold with corners).
Then the equality $\psi^*\lambda = \lambda$ implies
$\psi_\del^*d\lambda_\del = d\lambda_\del$ and hence
$$
d\psi_\del (\ker d\lambda_\del)= \ker d\lambda_\del
$$
on $\nbhd(\del M) \cap H_i$. Therefore $\psi$ descends to a diffeomorphism $[\psi]: \cN_{H_i} \to \cN_{H_i}$
so that we have the commutative diagram
$$
\xymatrix{H_i \ar[d]^{\pi_{H_i}} \ar[r]^{\psi} & \psi(H_i) \ar[d]^{\pi_{\psi(H_i)}}\\
\cN_{H_i} \ar[r]^{[\psi]} & \cN_{H_i}.
}
$$
By composing $\sigma_i' = \psi \circ \sigma_i$ with
$ \pi_{\psi(H_i)}$ to the left, we obtain
$$
\pi_{\psi(H_i)} \sigma_i' = \pi_{\psi(H_i)} \circ \psi \circ \sigma_i
= [\psi]\circ \pi_{H_i} \circ \sigma_i = [\psi]
$$
which is a diffeomorphism.
Therefore the map
$$
\sigma_i^\psi : = \psi \circ \sigma_i' = \psi \circ \sigma_i \circ [\psi]^{-1}
$$
is a section of the projection $H_I' \to \cN_{H_i'}$. This finishes the proof.
\end{proof}

Clearly any diffeomorphism preserves the  transverse intersection property.
This proves that any diffeomorphism $\psi$ satisfying $\psi^*\lambda = \lambda + df$
with compactly supported $f$ is an automorphism of the \emph{structure of Liouville $\sigma$-sectors}.
(See Definition \ref{defn:sectorial-collection} and \ref{defn:structure}.) This finishes the proof of the theorem.
\end{proof}

Based on this discussion, we will unambiguously denote by $\aut(M)$ the automorphism group of
Liouville $\sigma$-sector $(M,\lambda)$ as in the case of Liouville manifolds.

\begin{remark} \begin{enumerate}
\item The above proof shows that the group $\aut(M,\lambda)$ is manifestly
the automorphism group of the structure of Liouville $\sigma$-sectors. We alert the readers that
this is not manifest in the original definition of Liouville sectors \emph{with corners}
from \cite{gps}, \cite{gps2}.
\item This simple characterization of the automorphism groups of Liouville $\sigma$-sectors with corners
enables one to define the bundle of Liouville sectors with corners in the same way for the
case of Liouville manifolds (with boundary) \emph{without corners}. See \cite{oh-tanaka:liouville-bundles}
for the usage of such bundles in the construction of continuous actions of Lie groups
on the wrapped Fukaya category of Liouville sectors (with corners).
\item Recall that the Liouville structure $\lambda$ on $M$ induces a natural contact structure
on its ideal boundary $\del_\infty M$.
We denote the associated contact structure by $\xi_\infty$. Then
we have another natural map
$$
\Aut(M,\lambda) \to \Cont(\del_\infty M,\xi_\infty)
$$
where $(\del_\infty M,\xi_\infty)$ is the group of \emph{contactomorphisms of the
contact manifold} $ (\del_\infty M,\xi_\infty)$.
(See \cite{giroux}, \cite{oh-tanaka:smooth-approximation} for the details.)
\end{enumerate}
\end{remark}

\appendix

\section{Giroux's ideal completion}\label{sec:giroux}

For the main purpose of the present paper,
we need to recall a more detailed description of Giroux's construction given in \cite{giroux}.

Under the definition of idea Liouville form $\beta$ in Definition \ref{defn:giroux}, the vector field
$Z_\beta$ uniquely determined by the equation
$$
Z_\beta \rfloor \omega = \lambda
$$
is called the \emph{ideal Liouville vector field}.

The following result is proved by Giroux \cite{giroux}.

\begin{prop}[Ideal Liouville forms; Corollary 4 \cite{giroux}] On any Liouville domain $(F, \omega)$,
ideal Liouville forms constitute an affine space. Given a function $u: R \to \R_{\geq 0}$ with 
regular level set $\del_\infty F = {u=0}$, the underlying vector space can be described as consisting of all 
\emph{closed} one-forms $\kappa$ on $\Int F$ satisfying the following equivalent conditions:
\begin{enumerate}
\item The form $u \kappa$ extends to a smooth form on $F$.
\item The vector field ${\underset{\rightarrow}{\kappa}}/u$ ext4ends to a smooth vector field on $F$
(which is automatically tangent to $K : = \del_\infty F$).
\item There exists a function $f: F \to \R$ such that $\kappa - d(f \log u)$ is the 
restriction of a closed one-form on $F$.
\end{enumerate}
\end{prop}

The following corollary is derived in \cite{giroux} which is credited to 
\cite[Lemma 1.1 \& the subsequent remark]{bourgeois-ekholm-eliashberg}.

\begin{cor}[Corollary 5 \cite{giroux}] Let $(F,\omega)$ be an ideal Liouville domain and
$\lambda_t$ ($t \in [0,1]$) a path of ideal Liouville forms in $\Int F$. Then there is a symplectic
isotopy $\psi_t$  ($t \in [0,1]$) of $F$, relative to the boundary, such that $\psi_0 = \id$ and,
for every $t \in [0,1]$, the form $\psi_t^*\lambda_t - \lambda_r = dh_t$ for some function $h$ with 
compact support in $\Int F$.
\end{cor}

Here is the precise definition of the notion of \emph{ideal completion} of the Liouville domain
$(F,\lambda)$.

\begin{defn}[Example 9 \cite{giroux}] Let $(F,\lambda)$ be a Liouville domain, and let 
$u: F \to \R_{\geq 0}$
be a function with the following properties:
\begin{itemize}
\item $u$ admits $K: = \del_\infty F$ as its regular level set $\{u=0\}$,
\item $Z[\log u] < 1$ at every point in $\Int F$.
\end{itemize} 
Define 
$$
\omega: = d\left(\lambda/u\right)
$$
to be a symplectic form on $\Int F$ on the ideal Liouville domain $(F,\omega)$
which we call the \emph{ideal completion} of the Liouville domain $(F,\lambda)$.
\end{defn}

\section{Proof of Lemma \ref{lem:Hausdorff}}
\label{sec:Hausdorff}

 The subspace topology of 
$F_{\text{\rm ref}} = \Image \sigma_{\text{\rm ref}}$ is Hausdorff since $H$ is Hausdorff.
(See \cite[Theorem 1.3 in p. 138]{dugundji}, for example.)

Furthermore we can show that $\Image \sigma_{\text{\rm ref}}$ is a closed subset
of $H$ as follows.  Let $x \not \in F_{\sigma_{\text{\rm ref}}}$ and set $\ell_x: = \pi(x)$. Then we have
$$
x \neq\ \sigma_{\text{\rm ref}}(\pi(x))=:x', \quad \pi(x) = \pi(x').
$$
Since $\pi^{-1}(\ell_x) \subset H$ as a subspace of $H$ is Hausdorff, we can find two 
relatively compact open subsets  $U_1, \, U_2$ of $H$, which is locally compact Hausdorff,
such that 
$\overline U_1 \cap \overline U_2 = \emptyset$ and 
$x \in U_1$ and $x' \in U_2$. 

For each point $y' \in \overline U_2$,   because 
 $ \overline U_2 \subset  H \setminus U_1$, we can find open neighborhoods
 $U_{y'}^1$ of $x$, and  $U_{y'}^2$ of $y'$ respectively  such that
 $$
U_{y'}^1 \cap U_{y'}^2 = \emptyset, \quad U_{y'}^1 \subset U_1.
$$
In particular, we have
$$
\emptyset = U_{y'}^1 \cap U_{y'}^2  \supset (U_{y'}^2 \cap F_{\text{\rm ref}}) .
$$
By construction $\{U^2_{y'}\}$ is an open cover of $\overline U_2$,
compactness of $\overline U_2$ implies that there is a finite subcover 
$\{U_{y_1'}^2, \ldots, U_{y_k'}^2\}$ of
$\overline U_2$ out of $\{U_{y'}^2\}_{y' \in \overline U}$ such that
$$
\emptyset = \left(\bigcap_{i=1}^k U_{y_i'}^1\right) \cap \left(\bigcup_{i=1}^k  (U_{y'_i} \cap F_{\text{\rm ref}}) \right)
\supset \left(\bigcap_{i=1}^k U_{y_i'}^1\right) \cap (\overline U_2 \cap F_{\text{\rm ref}}).
$$
In particular the open neighborhood $U_1' : = \cap_{i=1}^k U_{y_i'}^1$ of $x$
does not intersect $\overline U_2  \cap F_{\text{\rm ref}}$. 

It remains to show that $U_1'$ does not intersect $F_{\text{\rm ref}} \setminus \overline U_2$
either. Suppose to the contrary that there exists a point 
$w \in F_{\text{\rm ref}} \setminus \overline U_2$
such that $w \in U_1'$. In particular, we have  $\pi(w) \in \pi(U_1')$.
Recall $\pi(x) \in \pi(U_1') \cap \pi(U_2) \subset \pi(U_2)$ since $\pi(x) = \pi(x')$ with $x' \in U_2$.
This implies that we have
$$
\sigma_{\text{\rm ref}}(\pi(w)) = \sigma_{\text{\rm ref}}(\pi(w'))
$$
for some $w' \in U_2$. Since $\sigma_{\text{\rm ref}}$ is one-to-one, this proves
$$
\pi(w) = \pi(w'), \quad w \in \overline U_2  \cap F_{\text{\rm ref}}, \, 
w' \in (H \setminus \overline U_2) \cap F_{\text{\rm ref}}
$$
Since $\pi$ is also one-to-one on $F_{\text{\rm ref}}$, we obtain $w = w'$,  a contradiction.  
Therefore this proves closedness of $F_{\text{\rm ref}}$.
Once this is proved, it follows that $\cN_H$ is Hausdorff by the classical 
fact. (See \cite[Theorem in p. 138]{dugundji}, for example.)

\section{Stability theorem of Liouville sectors} \label{sec:stability-sectors}

In this section, we extend a stability theorem of Liouville manifold proved in 
\cite[Theorem 9.2]{oh:sectorial} to the case of Liouville sectors of our current context.
A complete proof of the theorem for the case of Liouville manifolds, i.e., for 
the case with $\del M = \emptyset$ is given in \cite{oh:sectorial}. Therefore we have only
to ensure the boundary behavior laid out in Statements (2) and (3) below
\emph{under the additional hypothesis that 
$Z$ is tangent to $\del M$ near infinity.}

For this purpose, from the aforementioned hypothesis and the $C^3$-tameness of $(M,\omega)$,
we have already shown that there exists $\delta > 0, \, C > 0$ and a 
neighborhood $\nbhd(\del_\infty M) =\{s \geq N\} \times \del_\infty M$ such that 
for $F = \del_\infty M \cap \del M$, we have
\bea\label{eq:Iinfty-sinfty}
V_{\delta, C}: = F \times [-\delta, 0] \times \{|I| \geq C\} & \subset& 
 (\Phi^{-1})\left(\{s \geq N\} \times \del_\infty M\right), \\
F \times\{0\} \times \{|I| \geq C\} & \subset&  (\Phi^{-1})\left(\{s \geq N\} \times \del M \cap \del_\infty M\right),
\eea

\begin{theorem}[Compare with Theorem 9.2 \cite{oh:sectorial}] \label{thm:stability-sectors}
Let $(M,\lambda)$ be a Liouville sector,  and $\lambda_t$ be a
family of Liouville forms such that  for all $t \in [0,1]$ 
\begin{enumerate}
\item  $d\lambda_t = d \lambda$ and $Z_t$ is tangent to $\nbhd(\del_\infty M)$,
\item they satisfy
 $$
 \lambda_t - \lambda = dk_t
 $$
 for some smooth functions $k_t$   satisfying the bound
 $$
\left\|\frac{\del k_t}{\del t}\right\|_{C^1} < C.
$$
\end{enumerate}
Then there exists a diffeomorphism $\phi_t$ such that for all $t \in [0,1]$ 
\begin{enumerate} 
\item $\phi_t^*(\lambda_t) = \lambda_0$,
\item 
$\supp \phi \subset \supp \left(\frac{\del k_t}{\del t}\right)$, and
\item there exists a constant $\delta' > 0$, $C_1> C_2 > 0$ such that
$$
\phi_t (\overline V_{\delta',C'}) \subset V_{\delta,C},
$$
and
$$
\phi_t(F \times \{0\} \times \{|I| \geq C_1\}) \subset F \times \{0\} \times \{|I| \geq C_2\}
$$
where $V_{\delta,C}: = F \times (-\delta, 0] \times \{|I| > C_2\}$. 
\end{enumerate}
\end{theorem}

\begin{proof} In this proof, we mostly duplicate the proof of \cite[Theorem 9.2]{oh:sectorial} with
some adaptation to ensure the properties (2) and (3) required above.

We consider one-parameter family of contactifications on $Q = M \times \R$
with contact forms given by
$$
\alpha_\kappa = dt - \pi^*(\lambda + dk_\kappa)
$$
which are contact by the hypothesis $d\lambda_t = d\lambda$.
They define a family of contact structures on $Q$ given by
$$
\xi_\kappa: = \ker \alpha_\kappa \quad \text{\rm for } \, \kappa \in [0,1].
$$
We write $\lambda_\kappa = \lambda + dk_\kappa$.

Considering the `space-time' $Q=M \times \R$,  we denote the coordinate of the $\R$-factor
by $t$.
We note that the Reeb vector fields $R_{\alpha_\kappa}$ of each $\alpha_\kappa$ is given by
$$
R_{\alpha_\kappa} = \frac{\del}{\del t} \quad \text{\rm for all }\, \kappa \in [0,1].
$$
We lift the $s$-dependent function $k_\kappa$ to the product $M \times \R$
$$
\widetilde k_\kappa(x,t) := \pi^*k_\kappa(x)
$$
which we emphasize \emph{does not} depend on $t$-coordinate of
 the `space-time' $Q = M \times \R$.  This will be important when we
 go back to the study of the family $\lambda_t$ of Liouville one-forms
from our application of  Gray's stability theorem in the contactification.

 As in the general proof of the stability theorem, we will try to find a one-parameter
 family of contactomorphisms $\psi_\kappa^*\alpha_\kappa = e^{g_\kappa}\alpha_0$.
 In the current context of our interest, we will try to find
 \emph{strict} contactomorphisms for which $g_\kappa \equiv 0$.
Then we can choose a family of $s$-dependent vector fields
$$
\widetilde X_\kappa \in \xi_\kappa
$$
that we highlight satisfies
\be\label{eq:moser1}
d(\widetilde X_\kappa \rfloor \alpha_\kappa) + \widetilde X_\kappa \rfloor d\alpha_\kappa + \frac{\del \alpha_\kappa}{\del \kappa} = h_\kappa \,  \alpha_\kappa
\ee
for $h_\kappa = \frac{\del g_\kappa}{\del \kappa} \circ \psi_\kappa^{-1} \equiv 0$.
Then it follows that
\be\label{eq:hspsis-1}
\frac{\del\widetilde k_{s}}{\del t} = 0
\ee
and
$$
\alpha_\kappa - \alpha_0 = \lambda - \lambda_\kappa = - d\pi^*k_\kappa.
$$
Therefore we have
$$
\frac{\del \alpha_\kappa}{\del \kappa} = - \pi^* d\dot k_\kappa, \quad \dot k_\kappa: = \frac{\del k_\kappa}{\del \kappa}.
$$
Therefore \eqref{eq:moser1} with $h_\kappa \equiv 0$ is equivalent to
\be\label{eq:moser-equation}
d(\widetilde X \rfloor \alpha_\kappa) + \widetilde X \rfloor d\alpha_\kappa
- \pi^* d\dot k_\kappa = 0.
\ee

 Then the vector field $\widetilde X_\kappa$ is uniquely  determined by the equation
\be\label{eq:tildeXs}
\widetilde X_\kappa \in \xi_\kappa, \quad \widetilde X_\kappa \rfloor d\alpha_\kappa = - \pi^* d_M
\left(\frac{\del \widetilde k_\kappa}{\del \kappa}\right).
\ee
By the hypothesis $\left\|\frac{\del k_t}{\del t}\right\|_{C^1} < C$, the vector field
$\widetilde X_\kappa$ is globally Lipschitz.
This implies that the flow of $\widetilde X_\kappa$ exists on $Q = M \times \R$ and satisfies
$$
\widetilde \psi_\kappa^*\alpha_\kappa = \alpha
$$
for all $s \in [0,1]$.

Now we write $\widetilde \psi_\kappa(x,t) = (\psi_\kappa(x,t),b_\kappa(x,t))$ and its generating vector field
\be\label{eq:Xs}
\widetilde X_\kappa(x,t) = X_\kappa(x,t) \oplus a_\kappa(x,t)\frac{\del}{\del t}.
\ee
The condition $\widetilde X_\kappa \in \xi_\kappa$ also implies $0 = \alpha_\kappa(\widetilde X_\kappa) = a_\kappa - \lambda_\kappa(X_\kappa)$, i.e.,
\be \label{eq:das}
a_\kappa  = \lambda_\kappa(X_\kappa).
\ee
Then Moser's deformation equation \eqref{eq:moser-equation} is equivalent to
\be
X_\kappa \rfloor (-d\lambda) = d_M \dot k_\kappa
\ee
where we utilize the identity $d\lambda_{s} = d\lambda$ for all $s$.
Hence we obtain
\be\label{eq:suppXs}
\supp X_\kappa \subset \supp d_M \dot k_\kappa
\ee
and $\supp \psi_\kappa \subset \supp d_M \dot k_\kappa$.

Now we rewrite the projection to $M$ of the equation $\widetilde \psi_\kappa^*\alpha_\kappa = \alpha$
into
$$
\widetilde \psi_\kappa^*(dt - \pi^* \lambda_\kappa) = dt - \pi^* \lambda
$$
which is equivalent to
$$
db_\kappa - \psi_\kappa^*\lambda_\kappa = dt - \pi^*\lambda.
$$
From this, we derive
$$
\psi_\kappa^*\lambda_\kappa = \lambda + d_M b_\kappa, \quad \frac{\del b_\kappa}{\del t} \equiv 1.
$$
Noting the initial condition $(\psi_0(x,t), b_0(x,t)) = (x,t)$,
we in particular proved $b_\kappa(x,t) = t$ for all $x$.
Then by setting $s = 1$, we define
$$
\phi_t(x): = \psi_1(x,t),
$$
which then satisfies
$$
\phi_t^*\lambda_t = \lambda.
$$
Obviously each $\phi_t: M \to M$ is invertible for each $t \in [0,1]$
since the diffeomorphism $(x,t) \mapsto \psi_1(x,t)$ maps each $t$-slice to itself.

Finally we consider the case $k_t = h_V$ of our interest so that $\lambda = \lambda_V + dh_V$
and $\lambda_\kappa = \lambda_V + \kappa\, dh_V$ for which we have $\|h\|_{C^2} < C < \infty$ and
$$
\frac{\del h_V}{\del I}\Big|_{\del M} = 0
$$
on $V = F \times \{|I| \geq C\} \cap \{s \geq N\} \supset \del_\infty \cap \del M$.
We first check the completeness of the flow $\phi_t$. For this purpose, we
go back to the defining equation \eqref{eq:tildeXs} of the vector field $\widetilde X_\kappa$
$$
\widetilde X_\kappa \rfloor d\alpha_\kappa = - \pi^*d_M\widetilde k_\kappa
$$
with $\widetilde k_\kappa(t,x) = \kappa\, h_V(x)$ so that we have
$$
d_M\widetilde k_\kappa = \kappa\,  h_V(x).
$$
The $C^2$-boundedness of $h_V$
in particular implies that the vector fields $X_t$ are uniformly Lipschitz.
Therefore the flow exists for all time until it hits the boundary $\del M$
of our interest.  Furthermore, $X_t$ is also tangent to $\del M$ on
$V$ since $\frac{\del h_V}{\del I} = 0$ on $\del M$.

This now completes the proof of
Theorem  \ref{thm:stability-sectors}, and hence follows Lemma \ref{lem:stability-sectors}.
\end{proof}

\def\cprime{$'$}
\providecommand{\bysame}{\leavevmode\hbox to3em{\hrulefill}\thinspace}
\providecommand{\MR}{\relax\ifhmode\unskip\space\fi MR }
% \MRhref is called by the amsart/book/proc definition of \MR.
\providecommand{\MRhref}[2]{%
  \href{http://www.ams.org/mathscinet-getitem?mr=#1}{#2}
}
\providecommand{\href}[2]{#2}

%\bibliographystyle{amsalpha}

%\bibliography{biblio-oh}

\end{document}